\theoremstyle{plain}
\newtheorem{sa}{Theorem}[section]
\newtheorem{Thm}[sa]{Theorem}
\newtheorem{Lem}[sa]{Lemma}
\newtheorem{Prp}[sa]{Proposition}
\newtheorem{Cor}[sa]{Corollary}
\newtheorem{Def}[sa]{Definition}
\newtheorem{DefLem}[sa]{Definition/Lemma}
\newtheorem{Not}[sa]{Notation}
\newtheorem{Rem}[sa]{Remark}
\DeclareMathOperator{\Hom}{Hom}
\DeclareMathOperator{\RHom}{\ensuremath{\text{R}\mathcal{H}\text{om}}}
\renewcommand{\H}{\ensuremath{\mathrm{H}}}
\DeclareMathOperator{\CH}{\ensuremath{CH}}
\DeclareMathOperator{\SK}{\ensuremath{SK}}	
\DeclareMathOperator{\KH}{\ensuremath{KH}}	
\DeclareMathOperator{\K}{\ensuremath{K}}		
\newcommand{\G}{\ensuremath{\Gamma}}				
\DeclareMathOperator{\Gal}{Gal}	
\DeclareMathOperator{\Br}{\ensuremath{Br}}	
\DeclareMathOperator{\Alb}{\ensuremath{Alb}}
\DeclareMathOperator{\trdeg}{trdeg}	
\DeclareMathOperator{\coker}{coker}	
\DeclareMathOperator{\im}{im}
\DeclareMathOperator{\tr}{tr}
\DeclareMathOperator*{\prlim}{\lim\limits_{\substack{\longleftarrow}}}	
\DeclareMathOperator*{\inlim}{\lim\limits_{\substack{\longrightarrow}}}
\newcommand{\md}{\mathrm{mod}}	
\newcommand{\kt}{\equiv}				
\newcommand{\ab}{\mathrm{ab}}		
\newcommand{\di}{\mathrm{div}}
\newcommand{\sep}{\mathrm{sep}}	
\newcommand{\alg}{\mathrm{alg}}	
\newcommand{\Di}{\mathrm{Div}}
\newcommand{\tors}{\mathrm{tors}}
\newcommand{\geo}{\mathrm{geo}}	
\newcommand{\rd}{\mathrm{red}}	
\newcommand{\gp}{\mathrm{gp}}
\newcommand{\et}{\text{ét}}
\newcommand{\sft}{\mathrm{sft}}	
\newcommand{\Sch}{\mathrm{Sch}}
\newcommand{\Cat}{\mathcal{C}}
\DeclareMathOperator{\Quot}{Quot}	
\DeclareMathOperator{\cd}{cd}
\DeclareMathOperator{\Spec}{Spec}
\DeclareMathOperator{\ch}{char}		
\newcommand{\Q}{\mathbb{Q}}				
\renewcommand{\P}{\mathbb{P}}
\newcommand{\F}{\mathbb{F}}				
\newcommand{\Z}{\mathbb{Z}}				
\newcommand{\Ll}{\mathbb{L}}			
\newcommand{\Lb}{\mathbb{L}\text{-}}
\renewcommand{\k}{\kappa}					
\renewcommand{\O}{\mathcal{O}}		
\newcommand{\N}{\mathbb{N}}
\newcommand{\x}{\times}
\newcommand{\X}{\mathfrak{X}}
\newcommand{\pp}{\mathfrak{p}}
\newcommand{\fm}{\mathfrak{m}}
\newcommand{\sm}{\setminus}			
\newcommand{\ins}{\subseteq} 		
\newcommand{\sni}{\supseteq} 		
\newcommand{\inj}{\hookrightarrow}	
\newcommand{\injl}{\hookleftarrow}
\newcommand{\climm}{\hookrightarrow}	
\newcommand{\opimm}{\hookrightarrow}	
\newcommand{\srj}{\twoheadrightarrow}	
\newcommand{\srjl}{\twoheadleftarrow}
\newcommand{\iso}{\cong}		
\newcommand{\ol}{\overline}	
\newcommand{\ds}{\displaystyle}
\begin{document}


\begin{titlepage}
\enlargethispage{7cm}
\setlength{\topmargin}{-4cm}
\title{{\bf Cohomological Hasse principle for schemes over valuation rings of higher dimensional local fields}}
\author{Patrick Forré 
\thanks{The author was partially supported by Deutsche Forschungsgemeinschaft (DFG)} 
\date{} }
\maketitle

\begin{abstract}
K. Kato's conjecture about the cohomological Hasse principle for regular connected schemes $\X$ which are flat and proper over the complete discrete valuation rings $\O_N$ of higher local fields $F_N$ is proven. 
This generalizes the work of M. Kerz, S. Saito and U. Jannsen for finite fields to the case of all higher local fields.
For that purpose a $p$-alteration theorem for the local uniformization of schemes over valuation rings of arbitrary finite rank and a corresponding Bertini theorem is developed extending the results of O. Gabber, J. deJong, L. Illusie, M. Temkin, S. Saito, U. Jannsen to the non-noetherian world.
As an application it is shown that certain motivic cohomology groups of varieties over higher local fields are finite. 
This is one of the rare cases where such a result could be shown for schemes without finite or separably closed residue fields.
Furthermore, it will be derived that the kernels of the reciprocity map 
$\rho^X : \SK_N(X) \to \pi_1^\ab(X)$  
and norm map 
$N_{X|F}: \SK_N(X) \to K_N^M(F_N)$ modulo maximal $p'$-divisible subgroups are finite for regular $X$ which are proper over a higher local field $F_N$ with final residue characteristic $p$.
This generalizes results of S. Bloch, K. Kato, U. Jannsen, S. Saito from varieties over finite and local fields to varieties over higher local fields, both of arbitrary dimensions. 
\noindent
\end{abstract}
\flushleft  {\sl 2010 Mathematics Subject Classifications:} 
11G45, 14E15, 14F42, 11G25 \\
{\sl Keywords:}  Kato conjecture, cohomological Hasse principle, varieties over higher local fields, uniformization, alterations, valuation rings, higher dimensional class field theory, reciprocity map, étale homology, Kato homology, motivic cohomology, higher Chow groups, finiteness results\\
\tableofcontents
\thispagestyle{empty}
\end{titlepage}


\pagenumbering{arabic}
\pagestyle{headings}
\setcounter{page}{0}


\setcounter{section}{-1}

\section{Introduction}

Searching for a suitable framework for generalizing the famous \emph{Brauer-Hasse-Noether exact sequence} (cf. \cite{Neu11} Satz III 5.8.)
\[ 0 \longrightarrow \Br(K) \longrightarrow \bigoplus_\pp \Br(K_\pp) \longrightarrow \Q/\Z \longrightarrow 0,\]
which expresses a local-global-principle of an algebraic number field $K$, to rather general excellent schemes $X$, K. Kato found the far reaching cohomological approach (cf. \cite{Kat86}) by constructing the following complexes of Gersten-Bloch-Ogus type $C^{r,b}(X,\Z/n)$ for $n$ invertible on $X$:
\[\begin{array}{ccccc} \bigoplus_{x \in X_0} \H^r(\k(x),\Z/n(b)) &\longleftarrow& \bigoplus_{x \in X_1} \H^{r+1}(\k(x),\Z/n(b+1))
& \longleftarrow& \cdots\\
 \cdots &\longleftarrow &\bigoplus_{x \in X_d} \H^{r+d}(\k(x),\Z/n(b+d)) &\longleftarrow &0, \end{array}\]
 where $\bigoplus_{x \in X_a} \H^{r+a}(\k(x),\Z/n(b+a))$ is put in degree $a$. K. Kato conjectured that these complexes are exact, or equivalently, that their homology groups $\KH_a^{r,b}(X,\Z/n)$ vanish for certain values of $a,b,r,n$  (usually $r=\sup_{x \in X_0}\cd \k(x)$ and $b=r-1$ for $n$ invertible on $X$, $a>0$).\\
Since then Kato's conjectures have been studied for many schemes $X$ over several arithmetic ground schemes $S$ like the spectra of finite fields, global fields, the ring of integers of global or local fields and for higher dimensional (global function) fields (see \cite{Kat86}, \cite{JS03}, \cite{JS08}, \cite{Jan09}, \cite{Gei10b}, \cite{KS10}, et al.), having great effect on the understanding of 
finiteness results on motivic cohomology, special values of zeta functions and higher dimensional class field theory.\\
As a generalization of finite fields ($0$-local fields) and local fields ($1$-local fields) in this paper we will treat the case of higher local fields ($N$-local fields) and their complete discrete valuation rings as ground schemes. For $N \ge 1$ a $N$-local field is definied to be the quotient field of a complete discrete valuation ring with a $(N-1)$-local field as a residue field.\\

If $X$ is a scheme over a finite field or over the valuation ring of a $1$-local field, then
\[ \bigoplus_{x \in X_0} \H^1(\k(x),\Z/n) \longleftarrow \bigoplus_{x \in X_1} \H^{2}(\k(x),\Z/n(1))
 \longleftarrow \bigoplus_{x \in X_2} \H^{3}(\k(x),\Z/n(2)) \cdots, \]
is considered as Kato complex ($r=1,b=0$) and the following \emph{cohomological Hasse principles } are known (cf. \cite{KS10}):

\begin{Thm}[U. Jannsen, M. Kerz, S. Saito]
\label{hasse-finite}
\begin{enumerate}
\item If $Y$ is a connected regular variety proper over a finite field $F_0$, then
	\[ \KH_a(Y,\Z/n) \iso \left \{   
			\begin{array}{ll}
			 \Z/n, & \text{ for }  a=0, \\
			 0, & \text{otherwise,}
		\end{array} 	\right. \]
	for $n$ not divisible by the characteristic $p=\ch(F_0)$.\\
	As a consequence, for a proper strict normal crossing variety $Y$ over $F_0$ and any $a \in \Z$ we have
	\[ \KH_a(Y,\Z/n) \iso \H_a(\G_Y,\Z/n), \]
	where $\H_a(\G_Y,\Z/n)$ is the homology of the dual complex of $Y$.
\item If $\X$ is a connected regular scheme flat and proper over the valuation ring $\O_1$ of a local field $F_1$, then
 \[ \KH_a(\X, \Z/n) = 0 \]
 for any $a \in \Z$ and $n$ invertible on $\O_1$.
\end{enumerate}
\end{Thm}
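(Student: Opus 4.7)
The plan is to proceed by induction on $d = \dim Y$, reducing the general case to a smooth projective situation via Gabber's $\ell$-alteration theorem for primes $\ell \ne p$, cutting down dimension via a Bertini-selected hyperplane section, and treating the base case directly by class field theory over finite fields. The same framework, augmented by an analysis of the closed--open decomposition into special and generic fibers, handles part (ii).

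For $d = 0$, $Y$ is a disjoint union of spectra of finite extensions $F/\F_q$, and the Kato complex reduces to the single term $\H^1(F, \Z/n) \iso \Z/n$, giving $\KH_0 \iso \Z/n$ together with tautological vanishing in other degrees. For the inductive step, write $n = \ell^m$ with $\ell$ coprime to $p$ and apply Gabber's theorem to produce a proper surjective $f : Y' \to Y$ with $Y'$ regular projective and generic degree prime to $\ell$; a transfer on Kato complexes then makes $\KH_a(Y, \Z/n)$ a direct summand of $\KH_a(Y', \Z/n)$, reducing us to a smooth projective $Y'$. Choose a smooth ample divisor $H \ins Y'$ via Poonen's Bertini theorem over finite fields, set $U = Y' \sm H$, and exploit the localization triangle
\[ \cdots \to \KH_a(H, \Z/n) \to \KH_a(Y', \Z/n) \to \KH_a(U, \Z/n) \to \KH_{a-1}(H, \Z/n) \to \cdots \]
together with the induction hypothesis on $H$ of dimension $d-1$ and an Artin-vanishing / weight-purity argument on the affine open $U$.

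For part (ii), the closed--open decomposition $\X = X_s \sqcup X_\eta$ into the special fiber over $\F_q$ and the generic fiber over $F_1$ induces an analogous long exact sequence in Kato homology; vanishing on $X_s$ is handled by part (i), while $X_\eta$ is treated by the same induction using $\cd(F_1) = 2$ and Brauer--Hasse--Noether-type reciprocity for varieties over a local field. The consequence on strict normal crossings $Y = \bigcup Y_i$ is deduced by stratifying into multi-intersections $Y^{[k]} = \bigsqcup_{|I| = k} \bigcap_{i \in I} Y_i$, applying the regular case to each $Y^{[k]}$, and assembling via the spectral sequence of the stratification, which collapses onto the simplicial chain complex of $\G_Y$. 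The hardest point, as I see it, is the Bertini step over finite fields: the classical theorem fails, so one must invoke Poonen--Gabber density statements to find a smooth $H$ that meets every stratum transversally while keeping the Kato complex of the affine complement $U$ under tight control; this joint transversality-plus-rigidity, and its interaction with the horizontal differentials in the Kato complex, is the technical heart of the Kerz--Saito--Jannsen method and the place where I expect the main difficulty to lie.
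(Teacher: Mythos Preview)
This theorem is not proved in the present paper: it is stated in the introduction as background, attributed to Jannsen, Kerz and Saito, and supported only by the citation \cite{KS10}. There is therefore no ``paper's own proof'' to compare against beyond that reference.

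As a sketch of the method in \cite{KS10}, your outline has the right architecture (reduction to smooth projective via $\ell'$-alterations and a transfer argument, Poonen--Bertini hyperplane section, localization triangle, and the descent spectral sequence for the strict-normal-crossing consequence). But the phrase ``Artin-vanishing / weight-purity argument on the affine open $U$'' hides the entire technical heart: in \cite{KS10} this step is the \emph{Lefschetz condition} (labelled {\bf LC} later in this paper), which identifies the edge map $\H_a(U)\to \KH_a(U)$ with a map to \emph{weight homology} and is proved using Deligne's theory of weights \cite{Del80}. Affine Lefschetz alone does not give the required control of $\KH_a(U)$; you need the comparison with weight homology to conclude.

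There is also a genuine gap in your treatment of part (2). You write that ``vanishing on $X_s$ is handled by part (i)'', but part (1) gives $\KH_0(X_s,\Z/n)\iso \Z/n$, not $0$. The localization sequence for the pair $(X_\eta, X_s)$ in $\X$ does not decompose into two independently vanishing contributions; one must instead show that the boundary map $\KH_{a+1}(X_\eta)\to \KH_a(X_s)$ is an isomorphism for every $a$. This is exactly how the analogous step is handled in the paper's own main theorem \ref{etale-hasse} (there for $N$-local fields): in the base case $d=0$ the Kato complex of $\X$ is identified with the middle two terms of the localization four-term sequence, whose outer terms vanish by {\bf SAL}, and in higher dimensions one combines the inductive vanishing $\KH_\bullet(\X)=0$ with the localization sequence to obtain $\KH_{a+1}(X_\eta)\iso \KH_a(X_s)$ rather than arguing on each fibre separately.
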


Unfortunately, the analogon of the first result in \ref{hasse-finite} does not hold generally for varieties $X$ over $1$-local fields $F_1$ taking the corresponding Kato complex
\[ \bigoplus_{x \in X_0} \H^2(\k(x),\Z/n(1)) \longleftarrow \bigoplus_{x \in X_1} \H^{3}(\k(x),\Z/n(2))
 \longleftarrow \bigoplus_{x \in X_2} \H^{4}(\k(x),\Z/n(3)) \cdots. \]
 Even in the case where $X$ is a smooth and projective variety over a local field $F_1$ which has a regular connected model $\X$ flat and projective over the valuation ring $\O_1$ with quasi-semistable reduction $Y=\X_{s,\rd}$, where $s$ is the special point of $S=\Spec(\O_1)$, the exact sequence
 \[ \KH_{a+1}(\X,\Z/n) \longrightarrow \KH_{a}(X,\Z/n) \longrightarrow \KH_{a}(Y,\Z/n) \longrightarrow \KH_{a}(\X,\Z/n)\]
and both kinds of Hasse principles in \ref{hasse-finite} now imply for any $a \in \Z$ the isomorphisms
 \[ \KH_a(X,\Z/n) \iso \H_a(\G_Y,\Z/n)\stackrel{\text{i.g.}}{\neq}0.   \]
 These group vanish for varieties with a smooth $Y$ and $a > 0$ (e.g. in the good reduction case), but not for general strict normal crossing varieties $Y$ as reduction. So clearly the cohomological Hasse principle is wrong in this general case of varieties over local fields, and we can not expect it to generalize even further to higher local fields.\\
 
But if we instead consider the generalization of the second result in \ref{hasse-finite} in the case, where $\X$ is a scheme over the complete discrete valuation ring $\O_N$ of a $N$-local field, and if we take the Kato complex
\[ \bigoplus_{x \in \X_0} \H^N(\k(x),\Z/n(N-1)) \longleftarrow \bigoplus_{x \in \X_1} \H^{N+1}(\k(x),\Z/n(N))
 \longleftarrow \cdots, \]
then the following result will be proven.

\begin{Thm}[Kato's conjecture about the cohomological Hasse principle]
Let $\X$ be a regular connected scheme of dimension $d \ge 1$ proper and flat over the complete discrete valuation ring $\O_N$ of a $N$-local field $F_N$, $N>0$.\\ 
Then we have
\[ \KH_a(\X,\Z/n)=0\]
for every $a\in \Z$ and $n$ not divisible by the final residue characteristic $p=\ch(F_0)$ of $F_N$.
\end{Thm}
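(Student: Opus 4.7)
The plan is to proceed by induction on the level $N$ of the base field, taking as base case $N=1$ the theorem of Jannsen, Kerz and Saito recalled in Theorem~\ref{hasse-finite}(2) and assuming the statement for all levels strictly below $N$. Since Kato homology with $\Z/n$-coefficients is covariantly functorial under proper morphisms and $n$ is invertible at $p$, the $p$-alteration theorem developed earlier in the paper produces a proper alteration $f\colon \X'\to \X$ of degree prime to $p$ such that $\X'$ is regular and strictly semistable over $\O_N$; the transfer identity $f_*f^{*}=\deg(f)\cdot\id$ then reduces the problem to $\KH_a(\X',\Z/n)=0$. We may therefore assume that $\X$ itself is strictly semistable with special fiber $Y=\bigcup_{i\in I_0}D_i$ a strict normal crossing divisor over $F_{N-1}$.

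The next step is to split the Kato complex of $\X$ along the open/closed decomposition $\X=X\sqcup Y$, where $X=\X\times_{\O_N}F_N$ is the generic fiber. A point of $\X$ of dimension $a$ lies either on $X$ or on $Y$, and this yields a short exact sequence of Kato complexes whose associated long exact sequence takes the form
$$\cdots\to \KH_{a+1}(X,\Z/n)\to \KH_{a}(Y,\Z/n)\to \KH_a(\X,\Z/n)\to \KH_a(X,\Z/n)\to\cdots,$$
where the $X$-term is the standard complex $C^{N,N-1}(X,\Z/n)$ for a variety over the $N$-local field $F_N$, and the $Y$-term is, up to an index shift, the standard Kato complex for $Y$ over the $(N-1)$-local residue field $F_{N-1}$. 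It thus suffices to show vanishing of both $\KH_{*}(X,\Z/n)$ and $\KH_{*}(Y,\Z/n)$ in the appropriate ranges.

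For the special-fiber piece, a Mayer--Vietoris spectral sequence associated with the strict normal crossing covering $Y=\bigcup D_i$ reduces the computation of $\KH_*(Y,\Z/n)$ to the combinatorics of the dual complex $\Gamma_Y$ together with $\KH_*(D_J,\Z/n)$ for the smooth closed strata $D_J=\bigcap_{i\in J}D_i$, each projective and smooth over $F_{N-1}$. For every $D_J$ we invoke the $p$-alteration theorem a second time to produce a regular proper flat $\O_{N-1}$-model $\mathcal{D}_J$; the inductive hypothesis then yields $\KH_*(\mathcal{D}_J,\Z/n)=0$, and yet another localization sequence applied to $\mathcal{D}_J=D_J\sqcup \mathcal{D}_{J,s}$, combined with an induction on $\dim\X$, transfers this vanishing down to $D_J$. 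For the generic-fiber piece we use the Bertini theorem developed in this paper to construct a Lefschetz-type pencil on $X$ over $F_N$, whose fibers are of smaller dimension and to which the induction on $d=\dim\X$ applies, controlling $\KH_{*}(X,\Z/n)$ via the associated Leray-type spectral sequence on $\P^{1}_{F_N}$ after blow-up of the base locus.

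The main obstacle is the simultaneous control of two nested inductions, on $N$ and on $d$, through the $p$-alteration and Bertini machinery in the non-noetherian setting of valuation rings of arbitrary finite rank. As the introduction emphasises, the expected vanishing \emph{fails} in general for smooth projective varieties over higher local fields, so it is crucial that the strata $D_J$ only enter the argument through their $\O_{N-1}$-models; producing these models compatibly with the Kato complexes, while verifying that all localization sequences and spectral sequences degenerate as required under the combined inductions, is where the substantive technical work is concentrated.
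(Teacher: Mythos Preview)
Your proposal contains a genuine strategic gap. After the localization sequence
\[
\cdots\to \KH_{a+1}(X,\Z/n)\to \KH_{a}(Y,\Z/n)\to \KH_a(\X,\Z/n)\to \KH_a(X,\Z/n)\to\cdots
\]
you assert that ``it suffices to show vanishing of both $\KH_{*}(X,\Z/n)$ and $\KH_{*}(Y,\Z/n)$''. But neither of these groups vanishes: as the introduction emphasises (and as you yourself note in your last paragraph), for a smooth proper variety $X$ over an $N$-local field one has $\KH_a(X,\Z/n)\iso\H_a(\Gamma_{\hat\X_0},\Z/n)$, which is typically nonzero. The same obstruction hits your treatment of $Y$: knowing $\KH_*(\mathcal D_J)=0$ only yields $\KH_a(D_J)\iso\KH_{a-1}(\mathcal D_{J,s})$, not $\KH_a(D_J)=0$. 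So the localization sequence cannot be broken into two independently vanishing pieces; what one must show instead is that the boundary map $\KH_{a+1}(X)\to\KH_a(Y)$ is an \emph{isomorphism}, and your outline provides no mechanism for this.

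The paper circumvents this by a completely different architecture. Rather than separating $X$ and $Y$, it works elementwise: given $\alpha\in\KH_a^{(N)}(\X)$, one excises a small closed $W$ so that $\alpha|_U$ lifts through the edge morphism $\epsilon$ to $\H_a^{(N)}(U)$. After a $p$-alteration over the full rank-$N$ valuation ring $\O_N^{(N)}$ (not the discrete $\O_N$) to poly-quasi-semi-stable form, and a Bertini cut making a complement affine, a diagram chase using the pullback condition {\bf PB}, the strong affine Lefschetz condition {\bf SAL}, and the inductive hypothesis {\bf KC}$(N,d)$ and {\bf KC}$(N-1,d+1)$ forces $\pi^*\alpha=0$, hence $\alpha=0$. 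The passage to lower $N$ is not via the single localization sequence $X/Y$ but via the compatibility {\bf CMP} and the descent spectral sequence {\bf DSS}, which step by step transport the question of whether $\epsilon$ is an isomorphism down to finite fields, where the Lefschetz condition {\bf LC} of Kerz--Saito applies. This is precisely why the alteration theorem had to be developed over the rank-$N$ valuation ring: a sequence of rank-$1$ semistable reductions, as in your plan, loses the coherence needed to run the induction on $N$ and $d$ simultaneously.
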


Since U. Jannsen, S. Saito and K. Sato realized (cf. \cite{JSS09}) that the Kato complex for a scheme $f:\X \to S$ arises (up to sign) as the bottom complex of the $E^1$-layer of the niveau spectral sequence (cf. \cite{BO74}):
\[ E^1_{r,q}(\X)= \bigoplus_{x \in \X_{(r)}} \H_{r+q}(x) \quad\Longrightarrow\quad \H_{r+q}(\X),  \]
 attached to the étale homology theory defined by
\[  \H_a(\X,\Z/n):= \H^{N+2-a}_\et(\X,Rf^!\Z/n(N)_{S}), \]
 Kato's conjectures experienced a vast generalization to arbitrary homology theories. 
So, in this paper an axiomatic approach and prove of Kato's conjectures over $N$-local fields with conditions on the homology theories is given. And they will be verified for the étale homology theory.\\

The main idea of the proof of Kato's conjecture is to relate the occuring scheme $\X$ to varieties over a finite field and strongly make use of weight arguments (cf. \cite{Del80}) and the now proven Bloch-Kato conjecture (cf. \cite{Voe10b}, \cite{Voe10a}, \cite{SJ06}, \cite{HW09}) as done in \cite{KS10}. But since during the usage of a resolution of singularities of schemes over discrete valuation rings (cf. \cite{Hir64}, \cite{CJS09}, \cite{CP08}, \cite{CP09}) or of Gabber-de-Jong's $\ell'$-alterations (cf. \cite{Ill09}, \cite{dJ96})
information gets lost in the process, the idea of doing induction by $N$ and $d$ by applying a Bertini theorem (cf. \cite{JS09} §1, \cite{SS10} §4) etc. gets to a halt. 
To address this problem a new $p$-alteration theorem for the local uniformization of schemes $\hat \X$ over the whole rank-$N$-valuation ring $\O$ of the $N$-local field $F$ is developed:

\begin{Thm}[$p$-alteration for a local uniformization of schemes over valuation rings]
Let $\hat \X$ be integral and flat and of finite type over $\hat S=\Spec(\O)$,
$\hat f: \hat \X \to \hat S$, and let $\hat Z$ be a proper closed subset of $\hat \X$.\\
 Then after suitable $p$-alterations the map $\hat f$ Zariski-locally is given by
 a composition of algebra homomorphisms
\[ \O \to \frac{\O[X_1,\dots,X_s]}
{\left(t_1-M_1,\cdots, t_N-M_N \right)} \to C \tag{A},\]
where the first map is the canonical map and the second map is étale and $t_1,\dots, t_N$ is an ordered system of parameters of $\O$
 and $M_1, \dots, M_N$ are pairwise coprime monomials in the $X_i$ such that if $M_j=\prod_{i=1}^s X_i^{e_{ji}}$ then $\gcd(e_{j1},\dots,e_{js},p)=1$ for all $j=1,\dots,N$. Furthermore, (A) can be choosen such that $\hat Z$ in the middle is given by the zero set $V(X_1\cdots X_r)$ for a $r \le s$.
\end{Thm}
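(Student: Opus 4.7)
I would prove the theorem by induction on the rank $N$ of the valuation, combined with a noetherian approximation argument to handle the non-noetherianness of $\O$ when $N \ge 2$. The case $N = 1$ is the classical $p$-alteration theorem for schemes of finite type over a complete DVR (cf.\ \cite{Ill09}, \cite{dJ96} together with Temkin's refinement from $\ell'$-alterations to $p$-alterations), which already furnishes an étale-monomial chart of the form $\O_1[X_1,\dots,X_s]/(t_1 - X_1^{e_1}\cdots X_r^{e_r})$ with $\gcd(e_1,\dots,e_r,p) = 1$ and with $\hat Z$ contained in the horizontal divisor $V(X_1\cdots X_r)$.

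\textbf{Inductive setup.} For general $N$ I would exploit the chain of primes $0 = \qq_N \subsetneq \qq_{N-1} \subsetneq \cdots \subsetneq \qq_0 = \fm$ of $\O$: each $\O/\qq_j$ is a rank-$j$ valuation ring of a $j$-local field, and one chooses $t_1, \dots, t_N$ so that $t_j$ reduces to a uniformizer of the DVR $\O_{\qq_{j-1}}/\qq_j\O_{\qq_{j-1}}$. Reducing modulo $\qq_{N-1}$ turns $\hat\X_{N-1} := \hat\X \times_{\hat S} \Spec(\O/\qq_{N-1})$ into an integral scheme of finite type over a rank-$(N-1)$ valuation ring, to which the induction hypothesis applies, producing after a $p$-alteration and Zariski shrinking an étale chart
\[\O/\qq_{N-1} \to (\O/\qq_{N-1})[X_1,\dots,X_{s'}]/(\bar t_1 - \bar M_1,\dots,\bar t_{N-1} - \bar M_{N-1}) \to \bar C\]
with the image of $\hat Z$ realised as $V(X_1 \cdots X_{r'})$.

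\textbf{Lift and noetherian descent.} I would then lift this chart to $\hat\X$ over $\O$ itself by applying the one-parameter Gabber-Illusie-Temkin $p$-alteration one further time with respect to $t_N$, adjoining new variables if necessary so that the new monomial $M_N$ involves only variables not occurring in any previous $M_j$; this is how pairwise coprimality is built in. The lift is possible because the local rings of $\hat\X$ at closed points of the special fiber become henselian along $t_N$ after étale localization. To make the inductive and lifting steps rigorous despite the non-noetherianness of $\O$, I would descend $\hat\X$, $\hat Z$ and every alteration to a finitely generated sub-$\Z$-algebra $A \subset \O$ containing $t_1, \dots, t_N$; since étaleness, finiteness of an alteration, its generic degree (and hence being prime to $p$), and the explicit monomial presentation $t_j - M_j$ are all preserved under flat base change, the descent reduces to bookkeeping.

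\textbf{Main obstacle.} The principal difficulty will be to simultaneously achieve, at each inductive step, the pairwise coprimality of $M_1,\dots,M_N$, the condition $\gcd(e_{j1},\dots,e_{js},p)=1$ for every $j$, and the description of $\hat Z$ as $V(X_1\cdots X_r)$. This requires a Bertini-type argument in the spirit of \cite{JS09} \S 1 and \cite{SS10} \S 4 producing generic hyperplane sections that preserve the normal-crossings-plus-monomial structure through each alteration, combined with a log-regularity and tame-ramification analysis forcing the alterations to remain $p$-alterations rather than merely $\ell'$-alterations. This last point is the essential new technical input compared to the noetherian Gabber-Illusie setting, and it is precisely here that the hypothesis that $p$ is the final residue characteristic of $F_N$ intervenes.
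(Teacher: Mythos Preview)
Your inductive strategy on the rank $N$ is not how the paper proceeds, and the ``lift'' step contains a genuine gap. After applying the induction hypothesis to $\hat\X_{N-1}$ you obtain a $p$-alteration of the \emph{reduction} $\hat\X_{N-1}$ over the rank-$(N-1)$ ring $\O/\qq_{N-1}$; there is no mechanism in your outline for extending that alteration to one of $\hat\X$ over all of $\O$. Your proposed remedy, ``apply the one-parameter Gabber--Illusie--Temkin $p$-alteration one further time with respect to $t_N$,'' is invoked over the non-noetherian base $\O$ itself, where that theorem is not available, and henselianity along $t_N$ does not by itself lift an alteration of the special fibre to one of the total space. The coprimality of the new $M_N$ with the earlier $M_j$ is likewise asserted rather than arranged: simply ``adjoining new variables if necessary'' does not interact well with an alteration already performed at a lower level. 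Finally, the Bertini theorems of \cite{JS09} and \cite{SS10} play no role in the proof of the alteration theorem; they are used elsewhere in the paper for hyperplane sections, not for coprimality of monomials.

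The paper's argument is structurally quite different and avoids induction on $N$ entirely. One first writes $\O = \bigcup_i R_i$ as a filtered union of finitely generated $\Lambda$-subalgebras with $\Lambda = \Z_{(p)}$, $\F_p$ or $\Q$; each $R_i$ is noetherian and quasi-excellent. By noetherian approximation $\hat\X$, $\hat Z$ and $\hat f$ descend to some $R_i$, and the full Gabber--de Jong--Illusie--Temkin theorem (\cite{Tem15} Thm.~4.3.1, \cite{ILO12} X Thm.~3.5) is applied \emph{once} there to produce a log-smooth chart $A \to A\otimes_{\Z[P]}\Z[Q] \to C$ with fs monoids $P,Q$ and $\coker(P^\gp\to Q^\gp)$ of torsion order prime to $p$. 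After base-changing back to $\O$ and normalising, one invokes the monoid resolution of \cite{GR03} Thm.~6.1.31 to find an ordered system of parameters $t_1,\dots,t_N$ whose generated free monoid $P'$ contains $P$ up to units, then applies the monoidal desingularisation functor (\cite{ILO12} VIII 3.4.9, \cite{Niz06}) to replace $Q' = Q\oplus_P P'$ by a free monoid $Q''$. This yields the presentation $A[X_1,\dots,X_s]/(t_j - M_j)$. The pairwise coprimality of the $M_j$ is then obtained not by any Bertini argument but by an explicit hands-on computation: using that each $t_i$ is a non-zero-divisor on $B'$ (by flatness over the valuation ring), one shows that any relation $M_i \mid M_j$ or $\gcd(M_i,M_j)\neq 1$ can be eliminated by replacing $t_j$ with $t_j/t_i$ and $M_j$ with the corresponding quotient, strictly decreasing the total degree of $M_1\cdots M_N$. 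The gcd condition on the exponents falls out from the prime-to-$p$ torsion of $\coker(P^\gp\to Q^\gp)$ once the $M_j$ are coprime.
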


The proof uses noetherian approximation, the strongest version of 
Gabber-deJong-Illusie-Temkin $p$-alteration theorem for quasi-excellent schemes (cf. \cite{ILO12} X Thm. 3.5, \cite{Tem15} Thm. 4.3.1), a resolution result for monoids in totally ordered groups (cf. \cite{GR03} Thm. 6.1.31), resolution of toric singularities (cf. \cite{Kat94}, \cite{Niz06}, \cite{ILO12}) and hands-on concrete computations.\\

Furthermore, the following Bertini theorem over valuation rings is needed and shown:

\begin{Thm}[Bertini theorem over valuation rings]
Let $\hat f:\hat\X \to \hat S$ and $\hat Z$ be locally given as in (A).
Let $i: \hat \X \inj \P^{q}_{\hat S}$ be a fixed embedding, $q \in \N$.\\
Then after a multiple embedding of $i$ there exists a $\O$-rational hyperplane 
$H$ intersecting $\hat \X$ and every reduction of every irreducible component of every multiple intersection of $\hat Z$ transversally and 
such that $\hat f_|: \hat \X \cap H \to \hat S$, $\hat Z \cap H$ and $\hat Z \cup (\hat X \cap H)$ still have a description as in (A).
\end{Thm}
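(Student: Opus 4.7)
My plan is to prove this Bertini statement in three steps, paralleling the discrete-valuation-ring arguments of \cite{JS09} and \cite{SS10} but replacing the single closed fiber by the chain of residue fields attached to the primes of the rank-$N$ valuation ring $\O$. First, I would fix a sufficiently high Veronese re-embedding $v_d \circ i : \hat \X \hookrightarrow \P^{q'}_{\hat S}$ to enlarge the linear system of available hyperplanes, which is also needed to guarantee the existence of rational hyperplanes with prescribed behaviour over finite residue fields. Let $\hat Z_I := \bigcap_{i \in I} V(X_i)$ for $\emptyset \neq I \subseteq \{1,\dots,r\}$ enumerate the multiple intersections of components of $\hat Z$, and let $(\hat Z_I)_\alpha$ denote their irreducible components with reduced structure.

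Second, for every prime $\pp \subset \O$, classical Bertini over the residue field $\k(\pp)$ ensures that a sufficiently general $\k(\pp)$-rational hyperplane intersects the fiber $\hat \X_\pp$ and each fiber $((\hat Z_I)_\alpha)_\pp$ transversally. To combine these fiberwise conditions into a single $\O$-rational hyperplane I would use noetherian approximation: descend $\hat \X$, $\hat Z$ and the embedding $i$ to a finitely generated sub-$\Z$-algebra $R \subset \O$ (as already done for the $p$-alteration theorem above), apply the noetherian Bertini of \cite{JS09}, \cite{SS10} over the resulting $R$-scheme, and pull back to $\hat S$. Since $\O$ is a valuation ring, the coefficients of an $F$-rational hyperplane can always be rendered $\O$-integral after clearing denominators, so the hyperplane obtained by approximation yields a well-defined $\O$-rational hyperplane $H$ satisfying the required transversality on every fiber.

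Third, I would verify that the intersections retain the local form (A). Étale-locally, $H$ is cut out by a linear equation $L = \sum_j a_j X_j + c$; transversality of $H$ with $\hat \X$ and with each $V(X_i)$ for $i \le r$ forces some coefficient $a_k$ with $k > r$ to be a unit, so $X_k$ can be eliminated étale-locally, producing a presentation of type (A) for $\hat \X \cap H$ with $s-1$ free variables and the same monomials $M_1,\dots,M_N$. Similarly $\hat Z \cap H$ is cut out by $X_1 \cdots X_r$ in the new coordinates, and $\hat Z \cup (\hat \X \cap H)$ acquires the form (A) after relabeling $L$ as a new coordinate $X_{r+1}$, the pairwise coprimality of the $M_j$ being preserved since $L$ does not appear in them. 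The main obstacle is the second step: because $\O$ is non-noetherian and $\Spec(\O)$ has length $N+1$, no DVR-level Bertini applies directly. The approximating ring $R$ must simultaneously support a presentation of type (A) \emph{and} admit a hyperplane transversal along every stratum, and balancing the combinatorial interplay between the monomial structure and the hyperplane coefficients is the principal difficulty.
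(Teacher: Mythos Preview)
Your third step---the local verification that cutting by $H$ preserves the presentation (A)---is essentially the paper's argument, though you should be more careful: the local equation $h$ of $H$ in the chart $\Spec(B)$ is not linear in the $X_j$; it is an arbitrary element of $B$. What transversality at the deepest stratum $V=V(\pp+(X_1,\dots,X_r))$ gives you is that $h$ has nonzero image in $\fm/(\fm^2+I)$, hence $h\equiv a_{r+1}X_{r+1}+\cdots+a_sX_s$ modulo $\fm^2+I$ with some $a_k\in A^\times$, and then $\partial h/\partial X_k$ is a unit near $x$, making $\tilde B[T]\to B_d$, $T\mapsto h$, \'etale.

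The real divergence is in your second step, and here you are working much harder than necessary. The paper does \emph{not} check transversality at every prime of $\O$, nor does it descend to a noetherian approximation $R$. The key observation is that every closed point of $\hat\X$ lies over the closed point $s$ of $\hat S$, so it suffices to arrange transversality of $H_s$ with the strata of $\hat Z_s$ in the single fiber $\hat\X_s$ over the residue field $\O/\pp$. This is ordinary Bertini over a field (Jouanolou if $\O/\pp$ is infinite, Poonen after a Veronese if it is finite), and then \emph{any} $\O$-rational lift of a good $\k(s)$-hyperplane works, because the local computation of step 3 is carried out entirely at closed points. Your noetherian-approximation route is not only unnecessary but, as you yourself note, genuinely obstructed: the presentation (A) is tied to the ordered system of parameters $t_1,\dots,t_N$ of $\O$, which has no analogue over a general finitely generated $\Z$-algebra $R$, and the Bertini theorems of \cite{JS09}, \cite{SS10} are stated over discrete valuation rings, not over such $R$. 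So the ``principal difficulty'' you identify is an artifact of the approach; the paper sidesteps it entirely by reducing to a single fiber.
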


For an application of the cohomological Hasse principle and the alteration theorem we consider Bloch's higher Chow groups (cf. \cite{Blo86}, \cite{Lev04}, \cite{Gei04}) $\CH^q(\X,s)$.
A folklore conjecture states that these groups are finitely generated for schemes over arithmetic ground schemes. A positive result in this direction for $S=\Spec(\O_N)$ the spectrum of a complete discrete valuation ring of a $N$-local field, $p$ the final residue characteristic, is the following.

\begin{Thm}[Finite motivic cohomology groups]
\begin{enumerate}
\item  Bloch's higher Chow groups with finite coefficients
\[ \CH^q(\X,2q-r;\Z/n),\]
are finite for $p \nmid n$ and schemes $\X$ which are separated and of finite type over $\O_N$ of pure dimension and if $0 \le r \le q+1$ or $q \ge N+\dim_S\X-1$. 
\item If $\X$ is connected, regular and proper and flat over $S$, then the étale cycle map (cf. \cite{Blo86}, \cite{GL01}, \cite{Lev04}, \cite{Gei04})
\[ \rho_{\X,n}^{r,q}:\; \CH^q(\X,2q-r;\Z/n) \to \H_\et^{r}(\X,\Z/n(q)) \]
is an isomorphism of finite groups for $p \nmid n$ and if $0 \le r \le q$ or $q \ge N+\dim_S\X-1$, and is injective for $r=q+1$.
\end{enumerate}
\end{Thm}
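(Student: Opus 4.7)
The plan is to prove (2) first and then deduce (1) from it by means of $p$-alteration and compactification. For (2), I would invoke the Bloch-Kato conjecture (\cite{Voe10b}, \cite{Voe10a}, \cite{HW09}), which is equivalent to Beilinson-Lichtenbaum and gives the quasi-isomorphism between $\Z/n(q)^M$ and $\tau_{\le q}R\pi_*\Z/n(q)$ on smooth schemes for $p\nmid n$. Via the refined arithmetic form due to Geisser (\cite{Gei04}) for regular schemes flat over a Dedekind base -- which I would extend to the higher-rank valuation situation through noetherian approximation together with the $p$-alteration theorem of this paper -- the cycle map $\rho^{r,q}_{\X,n}$ becomes an isomorphism for $r\le q$ and injective for $r=q+1$. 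Thus (2) is reduced to finiteness of $\H^r_\et(\X,\Z/n(q))$ for $\X$ regular, connected, proper and flat over $\O_N$.

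For this finiteness I would exploit the niveau spectral sequence $E^1_{r,q}(\X)=\bigoplus_{x\in\X_{(r)}}\H_{r+q}(x)\Rightarrow\H_{r+q}(\X)$ attached to the \'etale homology theory $\H_a(\X,\Z/n)=\H^{N+2-a}_\et(\X,Rf^!\Z/n(N))$ recalled in the introduction, whose bottom row is the Kato complex. The cohomological Hasse principle just proven yields $\KH_a(\X,\Z/n)=0$, so this row is exact. Each $E^1$-term is a direct sum of Galois cohomology groups $\H^i(\k(x),\Z/n(j))$ of residue fields of $\X$; these fields are finitely generated extensions of subfields of $F_N$, and accordingly have finite (and eventually vanishing) Galois cohomology with $\Z/n$-coefficients by Bloch-Kato together with known finiteness of the relevant mod-$n$ Milnor $K$-groups. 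A bookkeeping argument on the spectral sequence then yields finiteness of $\H_a(\X,\Z/n)$, and hence of $\H^r_\et(\X,\Z/n(q))$; reinterpreting via Geisser's comparison completes (2).

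For (1), I would apply the $p$-alteration theorem of this paper to obtain an alteration $\X'\to\X$ of generic degree a power of $p$ with $\X'$ regular. Combined with the Bertini theorem over $\O$ and a Nagata-type compactification, this embeds $\X'$ into a regular connected $\bar\X'$ proper and flat over $\O_N$. Since $\gcd(p,n)=1$, the degree of the alteration is invertible in $\Z/n$, and the standard trace-correspondence identity $\rho_*\rho^*=\deg(\rho)\cdot\id$ transfers finiteness from $\bar\X'$ (obtained from (2) together with the localization sequence applied to the closed complement $\bar\X'\sm\X'$) down to $\X$, controlling the range $0\le r\le q+1$. The auxiliary range $q\ge N+\dim_S\X-1$ is handled separately by vanishing: the prime-to-$p$ cohomological dimension of every residue field of $\X$ is bounded by $N+\dim_S\X$, forcing the motivic complex $\Z/n(q)$ to be acyclic in the relevant bidegrees. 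The main obstacle is to guarantee that both the cycle-map comparison and the finiteness statements survive the non-Galois $p$-alteration as well as the passage from the complete discrete valuation ring $\O_N$ to the full rank-$N$ valuation ring $\O$ of $F_N$; this is precisely where the combination of the new $p$-alteration theorem and Bertini theorem over valuation rings proven earlier in this paper becomes indispensable.
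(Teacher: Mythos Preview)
Your argument contains a genuine error in the finiteness step for (2). You claim that the $E^1$-terms of the niveau spectral sequence, which are Galois cohomology groups of the residue fields $\k(x)$, are finite because these fields are finitely generated over subfields of $F_N$. This is false: for $x$ a non-closed point, $\k(x)$ is a function field over a higher local field, and groups such as $\H^1(\k(x),\Z/n(1)) \cong \k(x)^\times/(\k(x)^\times)^n$ are infinite. The niveau spectral sequence therefore cannot be used in the way you propose to bound the abutment. The paper instead proves finiteness of $\H^r_\et(\X,\Z/n(q))$ directly from Deligne's finiteness theorem (\cite{SGA4.5}, Th.\ finitude), the Hochschild--Serre spectral sequence, and the explicit computation of the Galois cohomology of higher local fields in Lemma~\ref{coh-hdf}. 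What the niveau spectral sequence \emph{is} used for is comparison: in Remark~\ref{motivic-rem} the motivic and \'etale niveau spectral sequences are compared row by row via the Bloch--Kato conjecture, and the only discrepancy sits in a single row whose $E^2$-terms are precisely the Kato homology groups, shown finite in Corollary~\ref{finite-KH}.

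There is a second gap in your treatment of (2): you establish the cycle map is an isomorphism for $r\le q$ and injective for $r=q+1$, and then turn to finiteness, but you never return to the isomorphism claim for $q\ge N+\dim_S\X-1$ in the range $r>q$. This case is not covered by Beilinson--Lichtenbaum alone. The paper handles it via the long exact sequence of Remark~\ref{motivic-rem} relating motivic and \'etale cohomology through ${}^\et E^2_{a,-N}=\KH_a^{(N)}(\X)$; here one needs the \emph{vanishing} of Kato homology from the Hasse principle (Theorem~\ref{etale-hasse}), not merely its finiteness. Finally, note that the paper's reduction for (1) goes in the opposite direction to yours: it uses stratification and excision to reduce to $X$ smooth over the \emph{field} $F$, rather than compactifying over $\O_N$, which avoids having to control the cycle map on singular or non-proper models.
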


These results generalize the results of S. Saito and M. Kerz (cf. \cite{KS10}, \cite{CTSS83}, \cite{Sai85b}) where their ground schemes always had finite residue fields to the case of a higher local field of any dimension.\\

As a further application of the cohomological Hasse principle we will consider higher dimensional unramified abelian class field theory of regular and proper varieties $X$ over $N$-local fields $F_N$. For this, let $\pi_1^\ab(X)$ be the abelianized étale fundamental group of $X$ classifying the finite abelian étale covers of $X$, and $\rho^X$ be the push-forward map of the sum of K. Kato's local reciprocity maps 
(cf. \cite{Kat7982})
\[ \rho^x: \K^M_N(\k(x)) \longrightarrow \Gal^\ab_{\k(x)} \]
of closed points $x \in X$. 
The reciprocity law (cf. \cite{Sai85}) says that for proper $X$ this map factors through 
\[\SK_N(X):=\coker\left( \bigoplus_{y \in X_1} \K^M_{N+1}(\k(y)) \stackrel{\partial}{\longrightarrow}  \bigoplus_{x \in X_0} \K^M_N(\k(x)) \right),  \]
where $\partial$ is the boundary map coming from Milnor-$K$-theory and $X_a$ is the set of $a$-dimensional points of $X$.\\
One main challenge of the class field theory of $X$ is the understanding of the kernel 
of $\rho^X$.
Since higher dimensional class field theory (modulo $n$) can be deduced from the above niveau spectral sequence of the étale homology theory (cf. \cite{JS03}) the above cohomological Hasse principle now relate the class field theory of $X$ over a $N$-local field $F_N$ with some reductions $Y$ over the corresponding finite field $F_0$, $p=\ch(F_0)$. And the following theorem will be deduced.
\begin{Thm}
The kernel of the reciprocity map $\ker \rho^X$ is the direct sum of a finite group and a group which is $\ell$-divisible for all $\ell \neq p$.\\
Furthermore, the kernel of the norm map modulo maximal $p'$-divisible subgroup
\[ N_{X|F_N}: \SK_N(X)/\SK_N(X)_{p'-\Di} \to K_N^M(F_N)/K_N^M(F_N)_{p'-\Di} \]
is finite as well.
\end{Thm}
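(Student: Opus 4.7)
The strategy is to translate the statements about $\rho^X$ and $N_{X|F_N}$ into statements about Kato homology with finite coefficients, apply the cohomological Hasse principle proven above to control those groups, and pass from the mod-$n$ information to the structural assertion by a profinite limit argument, following the pattern of \cite{JS03,KS10}. For $X$ proper over $F_N$, the niveau spectral sequence of the étale homology theory $\H_a(X,\Z/n) := \H^{N+2-a}_\et(X, Rf^! \Z/n(N))$ has its bottom row equal (up to sign) to a Kato complex with homology $\KH_a(X,\Z/n)$; after identifying Galois cohomology with Milnor $K$-theory via the (now-proven) Bloch--Kato conjecture, its low-degree edge morphisms give rise to an exact sequence of the shape
\[ \KH_1(X,\Z/n) \longrightarrow \SK_N(X)/n \xrightarrow{\rho^X \bmod n} \pi_1^\ab(X)/n \longrightarrow \KH_0(X,\Z/n) \longrightarrow 0. \]
Hence a uniform finiteness of $\KH_a(X,\Z/n)$ for $a=0,1$ and $n$ coprime to $p$ already suffices to handle both $\ker(\rho^X \bmod n)$ and $\coker(\rho^X \bmod n)$.

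To obtain this finiteness, one uses the $p$-alteration theorem of this paper to produce a regular, proper, flat $\hat{\X}$ over $\O_N$ together with a generically finite map $\hat{\X}_{F_N} \to X$ of $p$-power degree; a standard norm--trace argument then reduces the question for $X$, modulo $p'$-divisibles, to the same question for $\hat{\X}_{F_N}$. The localization triangle attached to the open-closed decomposition $\hat{\X}_{F_N} \hookrightarrow \hat{\X} \hookleftarrow Y$, where $Y$ is the reduced special fibre,
\[ \KH_{a+1}(\hat{\X},\Z/n) \longrightarrow \KH_a(\hat{\X}_{F_N},\Z/n) \longrightarrow \KH_a(Y,\Z/n) \longrightarrow \KH_a(\hat{\X},\Z/n), \]
has its outer groups killed by the cohomological Hasse principle proven in this paper, while the remaining group $\KH_a(Y,\Z/n)$ is controlled by Theorem \ref{hasse-finite}(1): after a further alteration yielding a strict normal crossing reduction of $Y$, it becomes the mod-$n$ homology of a finite dual complex $\Gamma_Y$, hence finite of order uniformly bounded in $n$.

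To upgrade these uniform mod-$n$ statements to the structural assertion, one takes the inverse limit over integers $n$ coprime to $p$. The uniform bound supplies the Mittag--Leffler condition, so $\varprojlim_n \ker(\rho^X)/n$ is finite; the kernel of the natural map $\ker \rho^X \to \varprojlim_n \ker(\rho^X)/n$ is by construction the maximal $p'$-divisible subgroup, and because the profinite quotient has bounded exponent the corresponding short exact sequence splits, giving the direct sum decomposition claimed for $\ker \rho^X$. For the norm map statement one inserts $\rho^X$ and $N_{X|F_N}$ into the commutative square whose bottom row is Kato's local reciprocity map $K_N^M(F_N) \to \Gal^\ab_{F_N}$; since the kernel of this local reciprocity is $p'$-divisible by Kato's local class field theory \cite{Kat7982}, finiteness of $\ker \rho^X$ modulo $p'$-divisibles transfers directly to finiteness of $\ker N_{X|F_N}$ modulo $p'$-divisibles.

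The main obstacle is the \emph{uniformity in $n$} of the Kato homology bounds feeding the Mittag--Leffler step: one has to arrange a single strict normal crossing model $Y/F_0$ computing $\KH_a(\hat{\X}_{F_N},\Z/n)$ simultaneously for every $n$ coprime to $p$. In the non-noetherian situation over the full rank-$N$ valuation ring $\O$, producing such a model is exactly the function of the $p$-alteration theorem and the Bertini theorem over valuation rings developed earlier in the paper.
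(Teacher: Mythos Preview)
Your strategy has the right shape but contains a genuine gap at the key step. The claim that $\KH_a(Y,\Z/n)$, once identified with the homology $\H_a(\Gamma_{Y_0},\Z/n)$ of the dual complex of a strict normal crossing reduction, is ``finite of order uniformly bounded in $n$'' is false in general: $\Gamma_{Y_0}$ is a finite simplicial complex, so $\H_a(\Gamma_{Y_0},\Z)$ is finitely generated, say with free rank $r$, and then $|\H_a(\Gamma_{Y_0},\Z/n)| \ge n^r$ grows with $n$ whenever $r>0$. Consequently the surjection $\KH_3(X,\Z_\Ll) \srj (\ker{'\rho^X})_\Ll$ only shows the target is a finitely generated $\hat\Z_\Ll$-module, not that it is finite, and your Mittag--Leffler step does not go through. (Two smaller issues: in the paper's conventions the relevant indices in the five-term sequence are $\KH_3$ and $\KH_2$, not $\KH_1$ and $\KH_0$, since closed points of $X$ lie in $X_{(1)}$ relative to $S_N$; and the kernel of $A \to \varprojlim_n A/n$ is the subgroup of $p'$-divisible \emph{elements}, which need not coincide with the maximal $p'$-divisible \emph{subgroup}---the paper distinguishes these and closes the gap using \cite{JS03} Lem.~7.7 together with the torsion structure of $\pi_1^\ab(X)(p')$ from Lemma \ref{pi1-structure}.)

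The paper avoids this by taking a different route to finiteness. Rather than bounding Kato homology directly, it works with the norm kernels $V'_m$ and the \emph{Gersten} row $E^2_{a,1}$ of the niveau spectral sequence. Proposition \ref{gersten-null} shows $\#E^2_{a,1}(\X,\Z/n)$ is bounded independently of $n$ for $a=0,1$ (the bound ultimately coming from $|\K'_N(F_{N-1})/n|$ dividing $q^b-1$, which is genuinely uniform), and this feeds Lemma \ref{finite-norm} to produce a chain of specialization maps $V'_N(X')_\Ll \to V'_{N-1}(\cdots)_\Ll \to \cdots \to V'_0(Y^{[0]})_\Ll$ with finite kernels and cokernels, terminating in a group known to be finite by Bloch--Kato--Saito over finite fields. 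This gives $V'_N(X)$ finite directly, and $\ker{'\rho^X}$ is then finite as a subgroup (via Remark \ref{norm-rec-mod-div} and Lemma \ref{V-tor}); the logical order is thus opposite to your final paragraph. The Kato-homology/dual-complex identification does appear in the paper's argument, but only to show that the pushforward $(\ker{'\rho^{X'}})_\Ll \to (\ker{'\rho^X})_\Ll$ along the alteration is surjective, not to bound either group outright.
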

This theorem generalizes the results of S. Bloch, K. Kato, S. Saito, U. Jannsen and the author (cf. \cite{Blo81}, \cite{Sai85}, \cite{JS03}, \cite{For11b}, \cite{For11a}) from curves, surfaces, varieties resp., over finite and local fields to varieties over higher local fields, both of arbitrary dimensions.
\\

 \vfill

{\bf Acknowledgement}\\
The author wants to express his sincere gratitude towards Prof. Dr. Takeshi Saito for the great hospitality during the stay at the University of Tokyo, Japan. He also wants to thank Prof. Dr. Uwe Jannsen, Prof. Dr. Shuji Saito and Prof. Dr. Moritz Kerz for their moral and professional support. Furthermore, the author is very thankful for various inspiring disscussions and helpful email correspondences with Prof. Dr. Michael Temkin, Prof. Dr. Luc Illusie, Prof. Dr. Amaury Thuillier and Dr. Martin Ulirsch.\\
The author was supported by Deutsche Forschungsgemeinschaft (DFG) during his stay at the University of Tokyo, Japan.

\section{Geometry over valuation rings}

\begin{Def}
Let $(\Gamma,+,>)$ be a totally ordered abelian group of finite rank $N$.
\begin{enumerate}
\item A subgroup $H \ins \Gamma$ is called \emph{convex} if for all $x,y \in H$ and $z \in \Gamma$ we have:
\[ x < z < y \quad \Longrightarrow \quad z \in H.  \]
\item An \emph{ordered basis} of $\Gamma$ is a tupel $(v_1,\dots,v_N)$ of $N$ elements $v_i \in \Gamma$ with $0<v_1 < \dots < v_N$ such that
for every convex subgroup $H \ins \Gamma$ there is a subset of $\{v_1,\dots,v_N\}$ building a $\Z$-basis for $H$.
\item Let $F$ be a valued field with valuation $v:\;F^\x \to \Gamma$, and corresponding valuation ring $\O$. 
 An \emph{ordered system of parameters} of $F$ with respect to $v$ are elements $t_1,\dots,t_n \in \O$ such that $(v(t_1),\dots,v(t_n))$ is an ordered basis of $v(F^\x)$.
\end{enumerate}
Note that the prime ideals of $\O$ correspond one-to-one to the convex subgroups of $v(F^\x)$ (cf. \cite{GR03}).
\end{Def}

\begin{Def}
\label{pqss-red}
Let $\O$ be a valuation ring with valuation group $\Gamma$ of finite rank $N$ and $p \ge 0$ the residue characteristic of $\O$. 
An integral scheme $\hat \X$ with a flat map $\hat f:\hat\X \to \hat S:=\Spec(\O)$ of finite type is called with \emph{poly-quasi-semi-stable reduction} if 
$\hat f$ Zariski-locally is given by a composition of algebra homomorphisms
\[ A \to B=\frac{A[X_1,\dots,X_s]}
{\left(t_1-M_1,\cdots, t_N-M_N \right)} \to C \tag{1},\]
where the first map is the canonical map and the second map is étale and $t_1,\dots, t_N$ is an ordered system of parameters of the valuation ring $A$
 and $M_1, \dots, M_N$ are pairwise coprime monomials in the $X_i$ such that if $M_j=\prod_{i=1}^s X_i^{e_{ji}}$ then $\gcd(e_{j1},\dots,e_{js},p)=1$ for all $j=1,\dots,N$. \\
A closed and reduced subscheme $\hat Z \ins \hat\X$ of a scheme with poly-quasi-semi-stable reduction is called a \emph{simple normal crossing divisor} if 
 there is a local representation like (1) such that $\hat Z$ is given by $V(X_1\cdots X_r)$ for a $r \le s$.
\end{Def}

\begin{Rem}
\label{pqss-red-rem}
 Let $\hat f:\hat \X \to \hat S=\Spec(\O)$ be flat of finite type with poly-quasi-semi-stable reduction and $\pp \ins \O$ the height one prime ideal and $\hat W =V(\pp)=\Spec(\O/\pp)$.
\begin{enumerate}
\item $\hat f$ is of finite presentation (cf. \cite{RG71} 3.4.7) and generically smooth.
\item $B$ in \ref{pqss-red} can be written as
\[ B = A \otimes_{\Z[\N^N]}\Z[\N^s],\]
where the generators of the monoids correspond to the $X_i$, the $t_j$, resp., and the images of the $t_j$ are the $M_j$. These define monoidally smooth Zariski fs log structures (cf. \cite{Kat94}, \cite{Niz06}) such that the map is log smooth (cf. \cite{Kat94} §8) and (log) integral (cf. \cite{Kat89} 4.1, \cite{ILO12} X 3.6.1 + 3.6.5) between them.
\item $\hat Y=\hat f^{-1}(\hat W)_\rd$ is a simple normal crossing divisor of $\hat\X$, since locally it corresponds to 
$V(X_1\cdots X_r)$, where $X_1,\dots,X_r$ are exactly the variables occuring in $M_1\cdots M_N$.
\item If $\hat Z \ins \hat \X$ is a simple normal crossing divisior which is flat over $\hat S$, then $\hat Z$ is given by $V(X_{r+1}\cdots X_t)$ for a $t \le s$ and $\hat Z \cup \hat Y$ is also a simple normal crossing divisor, given by $V(X_1\cdots X_t)$.
\item Let $\hat Y =\bigcup_i \hat Y_i$ be the union of its irreducible components $\hat Y_i$ of $\hat Y$. An irreducible component $ \hat V$ of a multiple intersection $\hat Y_{k_1} \cap \dots \cap \hat Y_{k_m}$ is then 
 flat of finite type over $\hat W$ with poly-quasi-semi-stable reduction and simple normal crossing divisor $\hat Z \cap \hat V$.\\
Indeed, locally such a $\hat Y_i$ is then given by $V(X_{in},\dots,X_{iN})$ with some variables $X_{ik} \mid M_k$ for $k=n,\dots,N$, where $t_n,\dots,t_N$ are all the parameters lying in $\pp$. Then $\hat V$ is given by $V(X_{k_1n},\dots, X_{k_mN})$ (and $X_{k_ik} \nmid M_1\cdots M_{n-1}$).  So $\hat f \x_{\hat S} \hat W|_{\hat V}$ is then locally given by
\[ A/\pp \to \frac{A/\pp[X_1,\dots,\cancel{X_{k_1n},\dots,X_{k_mN}},\cdots,X_s ]}{\left(t_1-M_1,\dots,t_{n-1}-M_{n-1}\right)} \to C/\pp C,\]
and $\hat Z \cap \hat V$ in this representation is given by $V(X_{r+1}\cdots X_t)$. 
\item Now assume that $\O_\pp$ is a discrete valuation ring, 
 and consider the pullback 
$\tilde \X=\hat \X \x_{\hat S} \tilde S$ of $\hat \X$ to $\tilde S=\Spec(\O_\pp)$.
Then $\tilde \X$ is regular and its closed subscheme $\tilde Z = (\hat Z \x_{\hat S} \tilde S)_\rd$ is a simple normal crossing divisor (both in the classical sense).\\
Indeed, the log structure on $\tilde S$ is Zariski and log regular (\cite{Kat94} §1) and $\tilde f: \tilde \X \to \tilde S$ is log smooth. 
It follows that the log structure on $\tilde \X$ is also log regular (\cite{Kat94} 8.2) and still monoidally smooth. By \cite{Niz06} 5.2, , \cite{ILO12} VIII 3.4.4, resp., $\tilde \X$ is then regular and its closed subscheme $\tilde Z = (\hat Z \x_{\hat S} \tilde S)_\rd$ is a normal crossing divisor, which is even a simple normal crossing divisor since the log structure is Zariski.
\item Note, that the reduced fibres $\hat \X_{m,\rd}$ in general are not simple normal crossing varieties. Even in the case where the valuation ring is "`glued"' by complete discrete valuation rings the multiple intersections of the irreducible components of $\hat \X_{m,\rd}$ are regular, but they might not be of the correct dimension to build a simple normal crossing variety.
\end{enumerate}
\end{Rem}

\begin{Thm}[Bertini theorem over valuation rings]
\label{bertini} 
Let $\O$ be a valuation ring with valuation group of finite rank $N$, $\hat S=\Spec(\O)$ with closed point $s=\pp \in \hat S=\Spec(\O)$, $\hat W =\ol{\{y\}}\ins \hat S$ the codimension one closed subset.
Let $\hat f:\hat\X \to \hat S$ be flat and projective with poly-quasi-semi-stable reduction with simple normal crossing divisor $\hat Z \sni \hat f^{-1}(\hat W)_\rd$. Let $i: \hat \X \inj \P^{q}_{\hat S}$ be a fixed embedding, $q \in \N$.
\begin{enumerate}
\item Let $H$ be a $\O$-rational hyperplane in $\P^q_{\hat S}$ such that $H_{s}$ intersects every irreducible component $V$ of the reduction of every multiple intersection $\left(\hat Z _{i_1} \cap \dots \cap \hat Z_{i_r}\right)_{s}$ transversally in $\hat \X_{s}$. 
Then $\hat Z \cup (H\cap \hat \X)$ is a simple normal crossing divisor of $\hat \X$ and $\hat f|: H\cap\hat\X \to \hat S$ has poly-quasi-semi-stable reduction with $\dim (H\cap\hat \X) = \dim\hat\X-1$ and simple normal crossing divisor $H \cap \hat Z$.
\item If $\O/\pp$ is infinite then there exists a $\O$-rational hyperplane $H$ like in 1.) intersecting all the $V$ transversally and such that $V \cap H$ is irreducible whenever $\dim V \ge 2$, and $H \cap \hat\X$ is integral in case $\dim \hat\X_{y} \ge 2$.
\item If $\O/\pp$ is finite then the same statement as in 2.) is true after replacing $i$ by a multiple embedding of $i$.
\end{enumerate}
\begin{proof}
1.) Similar arguments as in \cite{JS09} Thm. 1 apply. Let $x \in \hat\X$ be a closed point and 
\[ A \to B=\frac{A[X_1,\dots,X_s]}
{\left(t_1-M_1,\cdots, t_N-M_N \right)} \to C \tag{1}\]
be the local description. 
$x$ then corresponds in $\Spec(B)$ to the ideal $\fm=\pp+(X_1,\dots,X_s)$ and $\hat Z$ to $V(X_1\cdots X_r)$. The reduction of the intersection of all irreducible components of $\hat Z$ in $\Spec(B)$ is given by $V=V(I)$, where 
$I=\pp+(X_1,\dots, X_r)$. Then $\dim V= \dim B/I =s-r$.\\
Let the hyperplane $H$ in $\Spec(B)$ be given by $V(h)$.
In case $H$ does not run through $x$ (e.g. when $\dim V =0$) then $h \notin \fm$ and therefore is invertible in a neighbourhood of $x$ in $\Spec(B)$. So replacing $C$ by a localization the description (1) stays the same.\\
For $s-r \ge 1$ the fact that $H$ intersects $V$ transversally translates to 
$0 \neq \ol h \in \fm/(\fm^2+I)$. Then $h \kt a_{s-r+1}X_{s-r+1}+\dots +a_sX_s \;\md\;( \fm^2+I)$ with some $a_i \in A$, where at least one, let say $a_s$, is not vanishing. So we can assume that $a_s \in A^\x$. 
For short let $\tilde B$ be like $B$ as in (1) but without the $X_s$. Then $B=\tilde B[X_{s}]$. The element $d:=\frac{\partial h}{\partial X_{s}}\kt a_s \notin \fm$. 
Then the map $\hat f$ around $x$ localizes to 
\[A \to B':=\tilde B[T] \to \tilde B[X_s]_d=B_d \to C_d ,\qquad T \mapsto h ,\]
where the last two maps are étale. The divisor $\hat Z\cup (H\cap \hat\X)$ is in $\Spec(B')$ then given by $V(X_1 \cdots X_r\cdot T)$.\\
Similarly the map $\hat f|$ around $x$ on $H \cap \X$ is locally given by
\[A \to \tilde B = \tilde B[T]/(T) \to \tilde B[X_s]_d/(h)=B_d/(h) \to C/(h), \]
where the last two maps are étale.  The closed subscheme $H \cap \hat Z$ of $H\cap \X$ is in $\Spec(\tilde B)$ still given by $V(X_1 \cdots X_r)$. \\
2.+3.) The existence of the hypersurface $H_s$ follows from the Bertini theorem over the residue field $\O/\pp$ with
 \cite{Jou83} and \cite{Poo04}. Any $\O$-rational lift $H$ for $H_s$ will by 1.) then do.
\end{proof}
\end{Thm}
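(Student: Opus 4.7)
The strategy is to reduce part 1 to a direct local computation in the model description \ref{pqss-red}(1) and then to deduce parts 2 and 3 by applying a Bertini theorem on the special fibre and lifting. For part 1, I would work étale-locally at a closed point $x \in \hat\X$ with $\hat\X$ presented as $\Spec(C)$, $C$ étale over $B = A[X_1,\dots,X_s]/(t_1 - M_1,\dots,t_N - M_N)$ and $\hat Z = V(X_1 \cdots X_r)$. Let $h$ be a local equation of $H$ and write $\fm = \pp + (X_1,\dots,X_s)$, $I = \pp + (X_1,\dots,X_r)$, so that the smallest multiple intersection of the components of $\hat Z$ on the special fibre is $V = V(I)$. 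Transversality of $H_s$ with $V_s$ in $\hat\X_s$ translates to $\bar h \neq 0$ in $\fm/(\fm^2 + I)$, i.e.\ $h \equiv \sum_{i=r+1}^{s} a_i X_i$ modulo $\fm^2 + I$ with some coefficient, say $a_s$, a unit in $A$; equivalently $\partial h/\partial X_s$ is a unit near $x$. The substitution $T \mapsto h$ then presents $B$ as étale over $\tilde B[T]$, where $\tilde B$ is $B$ with $X_s$ removed. After further localization of $C$ this yields a local description of $H \cap \hat\X$ of the form \ref{pqss-red}(1) with one fewer free variable and of $\hat Z \cup (H\cap\hat\X) = V(X_1\cdots X_r \cdot T)$ as an sncd in $\tilde B[T]$. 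When $H$ misses $x$ (automatic if $\dim V = 0$), $h$ is already invertible near $x$ and no modification is needed. One further checks that transversality with the full intersection $V(I)$ is the strongest condition and automatically implies transversality with any sub-intersection $V(X_{i_1},\dots,X_{i_k})$ with $k \le r$, since the unit coefficient $a_s$ lies outside the corresponding ideal.

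For parts 2 and 3 I would apply a classical Bertini theorem on the special fibre $\hat\X_s \subset \P^q_\kappa$ with $\kappa = \O/\pp$ to produce a hyperplane $H_s$ meeting every irreducible component of every multiple intersection of $(\hat Z)_s$ transversally in $\hat\X_s$, together with irreducibility of $V \cap H_s$ for $\dim V \geq 2$ and integrality of $H_s \cap \hat\X_s$ when $\dim \hat\X_y \geq 2$. For infinite $\kappa$ this follows from Jouanolou's Bertini theorem applied to the finite family of components in play; for finite $\kappa$ one first passes to a sufficiently high Veronese embedding and applies Poonen's Bertini. Any $\O$-rational lift $H$ of $H_s$ then satisfies the hypothesis of part 1, so transversality is automatic; the irreducibility/integrality claims on the total space propagate upward from the special fibre by flatness of $H \cap \hat\X \to \hat S$ — itself a direct consequence of the local description produced in part 1.

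The core observation that makes the whole argument go through — and the only real obstacle — is that the variable $X_s$ eliminated in part 1 must not appear in any of the monomials $M_1,\dots,M_N$; otherwise the parameter relations $t_j - M_j$ would not transfer to $\tilde B$ and the poly-quasi-semi-stable structure would be destroyed. This is guaranteed by the standing hypothesis $\hat Z \supseteq \hat f^{-1}(\hat W)_\rd$: combined with the description of $\hat Y = \hat f^{-1}(\hat W)_\rd$ in \ref{pqss-red-rem}(3), it forces every variable appearing in $M_1\cdots M_N$ to lie among $\{X_1,\dots,X_r\}$, so that the variables $X_{r+1},\dots,X_s$ available for elimination are disjoint from the monomial support. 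The remainder of the proof is bookkeeping in the étale presentation together with the standard Jouanolou/Poonen lifting.
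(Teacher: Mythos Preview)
Your proof is correct and follows essentially the same approach as the paper's: the local computation in the model coordinates, the substitution $T \mapsto h$ making $B_d$ \'etale over $\tilde B[T]$, and the Jouanolou/Poonen lift for parts 2 and 3 all match the paper's argument. Your explicit justification that $X_s$ does not occur in any of the monomials $M_j$ (via the hypothesis $\hat Z \supseteq \hat f^{-1}(\hat W)_\rd$ together with Remark~\ref{pqss-red-rem}(3)) is exactly the point the paper uses implicitly when it writes $B = \tilde B[X_s]$ but does not spell out.
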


\begin{Def} Let $p \ge 0$ be either a prime or $p=0$.
\begin{enumerate} 
\item A morphism $\hat f: \hat \X \to \hat S$ is called \emph{maximally dominating}, if it sends generic points to generic points.
\item A morphism $b: \hat \X' \to \hat \X$ is called \emph{alteration}, if it is quasi-compact, separated, universally closed, maximally dominating, surjective and generically finite.
Carefully note that we explicitely refrain the morphism from being "locally of finite type" (i.e. from being proper,  cf. \cite{ILO12} X 3.3.1)! This is done to include the case in \ref{int-clo-alt}.
\item A morphism $b: \hat \X' \to \hat \X$ is called \emph{$p$-alteration}
if it is an alteration and the induced residue field extensions of the generic points are all of $p$-power degrees (where "$p$-power" is defined to be $1$ in case $p=0$). 
\item A morphism $\hat f': \hat \X' \to \hat S'$ is called \emph{pseudo-projective} if it factors as a local isomorphism followed by a projective map (cf. \cite{ILO12} X §3.1.6). Note that by \cite{ILO12} X 3.1.7 (ii) a quasi-projective map is the same as a separated pseudo-projective map.
\end{enumerate}
\end{Def}

\begin{Lem}
\label{int-clo-alt}
Let $A \ins B$ be integral domains with $B$ integral over $A$ and 
$[\Quot(B):\Quot(A)] < \infty$.
Then the induced map $g: \Spec(B) \to \Spec(A)$ is an integral alteration (but not necessarily locally of finite type).
\begin{proof}
By assumption $g$ is integral, generically finite, maximally dominating and
 affine, thus separated and quasi-compact. 
$g$ is surjective by the going-up theorem (cf. \cite{AM69} Thm. 5.10) and universally closed by the valuation criterion (cf. \cite{EGA2} 7.3.8).
\end{proof}
\end{Lem}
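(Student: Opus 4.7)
The plan is to verify each of the conditions in the definition of alteration in turn, appealing to standard commutative algebra of integral extensions. Since $B$ is integral over $A$, the morphism $g$ is affine, and any affine morphism is automatically separated and quasi-compact, disposing of two conditions at once. The word ``integral'' in the conclusion is then a literal restatement of the hypothesis.

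Next I would check the maximally dominating property. Since $A$ and $B$ are integral domains, the zero ideals $(0)_A$, $(0)_B$ are the unique generic points of $\Spec(A)$, $\Spec(B)$. Because $A \inj B$ is injective, the contraction $(0)_B \cap A$ equals $(0)_A$, so $g$ carries the generic point of $\Spec(B)$ to that of $\Spec(A)$. For generic finiteness, I would examine the fibre over $(0)_A$, namely $\Spec(B \otimes_A \Quot(A))$. This ring embeds into $\Quot(B)$ (as $B$ is torsion-free over $A$), and $\Quot(B)$ is a finite-dimensional $\Quot(A)$-vector space by hypothesis. Hence the generic fibre is a finite $\Quot(A)$-algebra with finitely many primes. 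Surjectivity I would deduce from the Cohen--Seidenberg going-up theorem: every $\pp \in \Spec(A)$ is the contraction of some $\Pp \in \Spec(B)$.

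The remaining, and arguably most subtle, condition is universal closedness. I would verify it via the valuation criterion applied to $g$ itself (composed with an arbitrary base change, which preserves integrality so that the same argument applies): given any valuation ring $R$ with fraction field $K$ and a commutative square with data $\Spec(K) \to \Spec(B)$ and $\Spec(R) \to \Spec(A)$, integrality of $B$ over $A$ forces every element of (the image of) $B$ in $K$ to satisfy a monic polynomial with coefficients in (the image of) $A$, hence in $R$. Since $R$ is integrally closed in $K$, those images already lie in $R$, giving the unique lift $\Spec(R) \to \Spec(B)$. This step is classical, and importantly nothing in it requires $B$ to be finitely generated over $A$; this is exactly why the conclusion holds without $g$ being locally of finite type, which is the novelty worth underlining in the statement.
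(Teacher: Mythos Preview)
Your proof is correct and follows essentially the same approach as the paper's: both verify the defining conditions of an alteration directly, using affineness for separatedness and quasi-compactness, going-up for surjectivity, and the valuative criterion for universal closedness. You simply spell out in more detail what the paper compresses into one sentence (maximally dominating, generic finiteness, and the actual valuative lifting argument); one tiny quibble is that $g$ is affine just because it is a map of affine schemes, not because $B$ is integral over $A$, but this is harmless.
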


\begin{Thm}[$p$-altered local uniformization of schemes over valuation rings]
\label{alteration}
Let $\O$ be a valuation ring with valuation group $\Gamma$ of finite rank $N \in \N$, $\hat S := \Spec(\O)$ and $p \ge 0$ its residue characteristic. 
Let $\hat f: \hat \X \to \hat S$ be a maximally dominating morphism of finite type and $\hat Z \ins \hat\X$ a nowhere-dense closed subset. 
Then there exist: 
\begin{enumerate}
  \item[(a)] an integral $p$-alteration $a: \hat S' \to \hat S$, where $\hat S'=\Spec(\O')$ with $\O'$ the integral closure of $\O$ in a finite field
	extension of $p$-power degree (and thus an integrally closed semi-local ring),
	\item[(b)] a $p$-alteration $b: \hat \X' \to \hat \X$ which is the compositum of projective maps with a base change of $a$,
 \item[(c)] a pseudo-projective and flat $\hat f': \hat \X' \to \hat S'$ of finite presentation with $\hat f \circ b = a \circ \hat f'$:
\end{enumerate}
\[ \xymatrix{
\hat\X' \ar^-{b}[r] \ar^-{\hat f'}[d] & \hat\X \ar^-{\hat f}[d] \\
\hat S' \ar^-{a}[r] & \hat S,
}\]
such that after localizing at any closed point of $\hat S'$ the scheme $\hat \X'$ has poly-quasi-semi-stable reduction via $\hat f'$ and simple normal crossing divisor
\[\hat Z' =\left(b^{-1}(\hat Z \cup \hat f^{-1}(\hat W))\right)_\rd,\]
 where $\hat W \ins \hat S$ is the codimension-one closed subscheme. 
\end{Thm}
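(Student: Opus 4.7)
The plan is to descend to a noetherian base via approximation, apply the $p$-alteration theorem of Gabber-Illusie-Temkin for quasi-excellent schemes, and then refine the resulting log-regular/SNC model by a monoidal and toric resolution so as to realize the explicit local form (1) of Definition \ref{pqss-red}. The argument is organized around an ordered system of parameters $t_1, \dots, t_N$ of $\O$, and all conclusions will be local at the closed points of $\hat S'$.

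First, I would perform a noetherian approximation. Since $\hat f$ is of finite type and $\O$ is a filtered union of its finitely generated $\Z$-subalgebras $R_\alpha$ (which are excellent), standard descent produces a finite-type morphism $f_\alpha:\X_\alpha \to \Spec(R_\alpha)$ and a closed subset $Z_\alpha \ins \X_\alpha$ pulling back to $(\hat f, \hat Z)$ along $\hat S \to \Spec(R_\alpha)$. One arranges that $R_\alpha$ contains the ordered parameters $t_1,\dots,t_N$, so that the image of the height-one prime $\pp \ins \O$ defining $\hat W$ corresponds to a closed subscheme $W_\alpha \ins \Spec(R_\alpha)$. Then $(\X_\alpha, Z_\alpha \cup f_\alpha^{-1}(W_\alpha)_\rd)$ is a finite-type pair over a quasi-excellent noetherian base.

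Next, I would apply the strongest form of the $p$-alteration theorem of Gabber-de-Jong-Illusie-Temkin (\cite{ILO12} X Thm. 3.5, \cite{Tem15} Thm. 4.3.1). This yields an integral $p$-alteration $a_\alpha: S_\alpha' \to \Spec(R_\alpha)$ obtained as the integral closure in a finite $p$-power-degree extension, together with a projective $p$-alteration $b_\alpha: \X_\alpha' \to \X_\alpha \x_{R_\alpha} S_\alpha'$ and a map $f_\alpha':\X_\alpha' \to S_\alpha'$, such that $\X_\alpha'$ is regular and $b_\alpha^{-1}(Z_\alpha \cup f_\alpha^{-1}(W_\alpha))_\rd$ is a strict normal crossing divisor. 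Pulling back along $\hat S \to \Spec(R_\alpha)$ produces the diagram in (a)--(c): the integral closure of $\O$ in the corresponding $p$-power-degree extension furnishes the semi-local $\O'$ of (a) (and defines an integral $p$-alteration by Lemma \ref{int-clo-alt}), while the base change of $b_\alpha$ realizes (b) as a composition of projective maps with a base change of $a$. Pseudo-projectivity and finite presentation of $\hat f'$ are inherited from this construction.

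The main obstacle is converting the abstract SNC/log-regular model into the explicit coordinate form (1) over $\O$. Locally $\hat\X'$ carries an fs log structure with chart $P \to Q$, where $P$ is the submonoid of $\O$ generated by the images of $t_1, \dots, t_N$ and $Q$ is a sharp fs monoid. The resolution theorem for monoids in totally ordered groups (\cite{GR03} Thm. 6.1.31) allows one to refine $P$ so that its generators form an ordered system of parameters of $\O'$; subsequent toric resolution of singularities (\cite{Kat94}, \cite{Niz06}, \cite{ILO12}) refines $Q$ so that the chart becomes the free monoid $\N^s$, producing the presentation $t_j = M_j$ \'etale-locally. A hands-on computation then shows that, after possibly absorbing the $p$-part of any common exponent factor into a further $p$-alteration, the $M_j$ can be arranged to be pairwise coprime with $\gcd(e_{j1}, \dots, e_{js}, p) = 1$. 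Flatness of $\hat f'$ follows from (1), and the SNC property of $\hat Z'$ is immediate from this local description. This combinatorial/toroidal step is the most delicate part of the proof, since it is carried out directly over the rank-$N$ valuation ring $\O'$ and requires verifying that all coprimality and gcd conditions survive each composition of alterations.
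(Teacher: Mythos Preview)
Your outline matches the paper's strategy closely: noetherian approximation, Gabber--Illusie--Temkin $p$-alteration over the noetherian base, base change back to $\hat S$, followed by a monoid/toric refinement to reach the explicit chart (1). Two points deserve correction, though.

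First, the order of operations around $\hat S'$ is slightly off. The $p$-alteration theorem at the noetherian level yields \emph{projective} $p$-alterations $a:S'\to S$ and $b:\X'\to\X$ with regular sources and a log-smooth $f':(\X',Z')\to(S',W')$. Only \emph{after} base change to $\hat S$ does one pass to the normalization of an irreducible component of $S'\times_S\hat S$ to obtain the semi-local $\O'$; this is where Lemma~\ref{int-clo-alt} enters and why the resulting $a:\hat S'\to\hat S$ is merely integral and possibly not of finite type.

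Second, and more substantively, your mechanism for the coprimality and gcd conditions is not the paper's, and your sketch does not make it clear it would succeed. The paper does \emph{not} invoke any further $p$-alteration here. Instead, flatness of $\hat f'$ is established \emph{first} (an integral scheme dominant over a valuation ring is automatically flat), and this flatness is then used as the engine of a purely algebraic reduction: since the $t_j$ are non-zerodivisors in $B'$, one can iteratively replace a relation $t_j-M_j$ by $t_j'-M_j'$ (with $t_j=t_j't_i$, $M_j=M_j'M_i$) whenever $M_i\mid M_j$, dropping the total degree of $M_1\cdots M_N$, until no such divisibility remains; a second flatness argument then forces pairwise coprimality of the $M_k$. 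The condition $\gcd(e_{j1},\dots,e_{js},p)=1$ is not obtained by any alteration either---it is read off directly from the fact that the torsion of $\coker(P^{\gp}\to Q^{\gp})$ is already prime to $p$, a consequence of the log-smoothness produced by the original $p$-alteration. So flatness is an input to the combinatorial step, not a consequence of having achieved (1).
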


\begin{Rem}
\label{alt-rem}
\begin{enumerate}
\item If $\hat f$ in \ref{alteration} is separated (proper, resp.) then $\hat f'$ can be assumed to be quasi-projective (projective, resp.) by \cite{ILO12} X 3.1.7 (ii).
\item If for some reason in \ref{alteration} the map $a$ is locally of finite type, then $a$ is finite and $b$ is projective.
 Indeed, a map is finite if and only if it is integral and locally of finite type.
\item If $\O$ is henselian in \ref{alteration} then $\O'$ is a valuation ring as well.
\item The alteration theorem \ref{alteration} will be applied to the henselian valuation ring $\O=\O_N^{(N)}$ of a $N$-local field $F_N$ (see \ref{hlf-def}). The map $a$ in \ref{alteration} is then given as the extension of valuation rings $a^\#: \O_N^{(N)} \to {\O'}_N^{(N)}=\O'$ of a finite field extension $F'_N|F_N$, which in general is not (!) finite. 
Furthermore, let $\O_m^{(1)}$ be a complete discrete valuation ring in the construction of $\O$ then $\O_m^{(1)}=\O_{\pp_{m-1}}/\pp_m\O_{\pp_{m-1}}$.
The corresponding fibre along $a$ is then givesn by the algebra:
\[ \O' \otimes_O O_m^{(1)} = \O'_{\pp'_{m-1}}/\pp_m\O'_{\pp'_{m-1}}, \] 
which also in general need not (!) to be a finitely generated $\O_m^{(1)}$-module. But we have the following property which will be exploited in its application:
\[(\O' \otimes_O O_m^{(1)})_\rd = (\O'_{\pp'_{m-1}}/\pp_m\O'_{\pp'_{m-1}})_\rd
= \O'_{\pp'_{m-1}}/\pp_m'\O'_{\pp'_{m-1}} = {\O'}_m^{(1)}\]
is a finitely generated $\O_m^{(1)}$-module
since it is the integral closure of the excellent ring $\O_m^{(1)}$ in the finite field extension $F'_m|F_m$ (see \ref{ext-val-rings}).
So the maps 
\[a_{m,\rd}:\; \left(a^{-1}(\Spec(\O_m^{(1)}))\right)_\rd = \Spec({\O'}_m^{(1)}) \to \Spec(\O_m^{(1)}) \]
are finite (!) for $m=1,\dots,N$. 
And the corresponding maps $b_{m,\rd}$ in \ref{alteration} are then projective, 
$m=1,\dots,N$.
\item In the first version of this paper a $\ell'$-alteration theorem was stated and proved with help of \cite{ILO12} X Thm. 3.5. But with help of the newer \cite{Tem15} Thm. 4.3.1 the theorem \ref{alteration} could also be improved to only use $p$-alterations.
\end{enumerate}
\end{Rem}

\begin{proof}[Proof of \ref{alteration}]
W.l.o.g. $\hat Z$ can be replaced by $\hat Z \cup \hat f^{-1}(\hat W)$, where $\hat W$ is the codimension one closed subscheme of $\hat S$. Furthermore, by separating the irreducible components of $\hat X$ and taking the reduced structure one can assume $\hat \X$ to be integral. $\hat f$ then becomes flat (\cite{Bou72} Ch.VI §3.6 Lem. 1). $\hat f$ is then of finite presentation by \cite{RG71} 3.4.7, and the closed subset $\hat Z$ can be given a scheme structure of finite presentation over $\hat S$.\\
Let $\Lambda:=\im(\Z \to \O)_{(p)}$ (i.e. $\Lambda=\Q$, $\F_p$ or $\Z_{(p)}$). Then $\O$ can be written as a union $\O=\bigcup_i R_i$ of finitely generated $\Lambda$-algebras $R_i \ins \O$, which are noetherian, quasi-excellent and by \cite{Tem15} Thm. 1.2.4 universally $p$-resolvable. 
By noetherian approximation \cite{EGA43} Thm. 8.8.2 (ii) there exists an index $i$, a scheme $\X$ with a nowhere-dense closed subscheme $Z \ins \X$ and a maximally dominating morphism of finte type $f: \X \to S=\Spec(R_i)$ such that $\hat \X=\X\x_S \hat S$, $\hat Z=Z \x_S \hat S$, $\hat f=f \x_S \hat S$.\\
By the $p$-alteration theorem of Gabber-deJong-Illusie-Temkin for noetherian quasi-excellent schemes \cite{Tem15} Thm. 4.3.1
(also cf. \cite{ILO12} X Thm. 3.5)
there are projective $p$-alterations $a: S' \to S$, $b:\X' \to \X$ with regular sources, a pseudo-projective morphism $f': \X' \to S'$ compatible with $f, a, b$ and snc divisors $W' \ins S'$ and $Z' \ins \X'$ such that $Z'=b^{-1}(Z) \cup f'^{-1}(W')$ and the morphism $f': (\X', Z') \to (S',W')$ is log smooth (cf. \cite{Kat89} §3). W.l.o.g we now can assume $\X'$ to be projective over $S'$.
 Locally around a closed point $f'$ is then (cf. \cite{Kat89} 3.5 + 3.6) given by the compositum of ring homomorphisms
\[ A \to B=A\otimes _{\Z[P]}\Z[Q] \to C \tag{2},\]
where the last map is étale, $\varphi: P \to Q$ is a monoid morphisms between fine and saturated monoids whose generators correspond to local equations of the sncd $W'$ and $Z'$.
Furthermore, The corresponding map
$\varphi^\gp: P^\gp \to Q^\gp$ is injective and the torsion of its cokernel is finite of order prime to $p$.\\
We now take the base change with $\x_S \hat S$. 
The projective $p$-alteration $S'\x_S \hat S \to \hat S$ becomes then finite over the valuation ring.
Let $\hat S'$ be the normalisation of an irreducible component of $S'\x_S \hat S$. Then $\hat S'=\Spec(\O')$ with a semi-local ring $\O'$ (cf. \cite{Bou72} VI §8.3 Rem., §8.5 Thm.2). And the map $\hat S' \to \hat S$ is then an integral $p$-alteration by \ref{int-clo-alt} (Note that we here make use of the relaxed definition of $p$-alterations, which might not be locally of finite type). 
Now take another base change with $\x_{S'\x_S \hat S} \hat S'$ and get $\hat \X''=\X' \x_S \hat S'$ over $\hat S'$, which is then locally still given as in (2). \\
In this presentation the ring $A$ can be chosen to be a valuation ring (\cite{Bou72} VI §8.3 Rem.) which is étale and generically finite over $\O'$ with a totally ordered valuation group $\Gamma'$ of rank $N$.\\
Let $a_i$ be the images of the generators of $P$ in $A$. 
Since there are only finitely many $a_i$ by \cite{GR03} Thm. 6.1.31 there is an ordered system of parameters $t_1,\dots,t_N$ of $A$, 
 whose generated monoid $P'$ is free and contains the $a_i$ up to units $\epsilon_i \in A^\x$. 
Because the torsion of $\coker(\varphi^\gp)$ is prime to $p$, there is a étale extension $A \to A'$ such that these units can be transformed into the generators of $Q$ after a basechange to $A'$.
So w.l.o.g. we can assume that the $a_i$ lie in $P'$ and the map $P \to A'$ factors through $P \to P' \ins A'$. 
Now replace $A'$ with $A$ again and let
$Q'=Q\oplus_P P'$ be the monoidal pushout of $Q$ and $P'$ along $P \to P'$. 
Then we have 
\[B=A \otimes_{\Z[P]}\Z[Q]=A \otimes_{\Z[P']}\Z[Q'] \tag{3}\]
and $Q'$ and $P'$ give the log structures on $\hat \X''$ and $\hat S'$.\\
Now the issue that $Q'$ might not be a free monoid is treated.
We apply to $\Spec(C)$ (i.e. to $\Spec(\Z[Q'])$ and then pullback) the monoidal desingularisation functor (cf. \cite{ILO12} VIII 3.4.9), which is composed of a sequence of saturated log blow ups. 
The resulting scheme locally is then of the form $\Spec(\Z[Q''])$ with a free monoid $Q''$ and an injective map $Q' \to Q''$ with $(Q'')^\gp\iso (Q')^\gp$.
Taking the composite of the canonical map to the base change of (3) we get
\[ A \to A \otimes_{\Z[P']}\Z[Q'']=B\otimes_{\Z[Q']}\Z[Q'']   \to C\otimes _{\Z[Q']}\Z[Q'']=:C' \tag{4}\]
where the second map is étale. Explicitely we have
\[B':=A \otimes_{\Z[P']}\Z[Q'']=
\frac{A[X_1,\dots,X_s]}
{\left(t_1-M_1,\cdots, t_N-M_N \right)} \tag{5} \]
with some monomials $M_i$.\\
Since saturated log blow ups can be globalized on $\hat \X''$ (cf. \cite{Niz06} §4) we end up with a proper transform $\hat \X'$ and a projective birational map $\hat \X' \to \hat \X''$ whose composite to $\hat S'$ is  étale locally of the form (4) and (5).
Since $\hat \X'$ is dominating $\hat S'$, the corresponding map is flat (cf. \cite{Bou72} Ch.VI §3.6 Lem. 1).\\
Around a closed point $x \in \hat \X'$ in the very special fibre of $\Spec(A)$, the point $x$ corresponds in the local description to the maximal ideal $\fm=\pp+(X_1,\dots,X_s)$ in $B$, where $\pp$ is the maximal ideal of $A$. Localizing on $\fm$ then makes $A \to B$ flat by descent \cite{GW10} 14.12. Now the special choice of the monomials will be derived by iterative reduction of the overall degree of $M_1\cdots M_N$. 
First it is shown that we can choose the $M_k$ such that there is no pair $i\neq j$ with $M_i \mid M_j$.\\
In the valuation ring either $t_j \mid t_i$ or $t_i \mid t_j$ holds.
In the first case we had $t_i=t_i't_j$ with a $t_i' \in A$ and $M_j=M_j'M_i$ with a monomial $M_j'$. We then have:
\[ t_j(t_i'M_j'-1) \equiv t_jt_i'M_j'-t_j \equiv t_iM_j'-t_j \equiv M_iM_j-t_j \equiv M_j-t_j \equiv 0. \]
By flatness $t_j$ is not a zero divisor on $B'$.
 So $t_i'M_j'-1 \in \left(t_1-M_1,\cdots, t_N-M_N \right)$, which is a contradiction to the fact that the parameters $t_k$ can not be $A$-linear combined to $-1$. So the case $t_i \mid t_j$ can not occur.\\
In the second case we have $t_j=t_j't_i$ with a $t_j' \in A$ and $M_j=M_j'M_i$ as before. In this case we have:
\[ t_i(t_j'-M_j') \equiv t_it_j'-M_iM_j' \equiv t_j-M_j \equiv 0. \]
By flatness $t_i$ is not zero divisor on $B'$. So $M_j' \equiv t_j'$ and we can replace the relation $t_j-M_j$ by $t_j'-M_j'$. Then $t_1,\dots, t_j',\dots, t_N$ still is an ordered system of parameters and the overall degree drops. Repeating this process will give a representation as in (4) and (5) with no $M_i$ dividing any $M_j$, $j \neq i$.\\
Now assume there still were a monomial $M_i$ having a common variable with a $M_j$, $j \neq i$.  
Consider the monomial $W=\gcd(M_i,M_j) \neq 1$. Then $M_i=M_i'W$ and $M_j=M_j'W$ with coprime monomials $M_i', M_j'$.
W.l.o.g. assume the case $t_j=t_j't_i$ (otherwise swap $i$ and $j$). Then we get:
\[ t_i(M_j'-t_j'M_i') \equiv (M_i'W)(M_j'-t_j'M_i') \equiv M_i'(M_j-t_j'M_i') \equiv M_i'(t_j-t_j't_i) \equiv 0.  \]
Again by flatness $t_i$ is not a zero divisor on $B'$ and we get that
$M_j'-t_j'M_i'$ lies in the ideal $\left(t_1-M_1,\cdots, t_N-M_N \right)$.
Again since the parameters $t_k$ can not $A$-linear combined to $1$ there must be at least one index $k$ such that $M_k \mid M_j' \mid M_j$. Since $\deg M_j' < \deg M_j$ we have $k \neq j$. But this was ruled out by the previous arguments.
So the monomials $M_k$ can be assumed to be pairwise coprime. \\
Furthermore, because the torsion of the group
\[\coker((P')^\gp \to (Q'')^\gp)\iso\coker((P')^\gp \to (Q')^\gp)\iso\coker(P^\gp \to Q^\gp)\]
is finite of order prime to $p$ and the $M_k$ are pairwise coprime
the special form of the exponents in the monomials follows.\\
Throughout this construction the closed subset $\hat Z'$ is locally given by the vanishing of a product of generators of the monoids, i.e. w.l.o.g. by a zero set $V(X_1\cdots X_r)$ in (5).
\[ \xymatrix{
\Spec(C)\ar[r]\ar^-\et[d]&\hat\X' \ar[r] \ar^-b[rrrd] \ar^-{\hat f'}[rddd]&
\hat S'\x_S \X' \ar^-{f'\x_S\hat S'}[ddd]\ar[r]&\hat S \x_S\X'\ar^-{b\x_SS'}[dr] \ar[rrr] \ar\ar^-{f'\x_SS'}[ddd] &&& \X' \ar_-b[ld] \ar^-{f'}[ddd]\\
\Spec(B)\ar[dd]&&&& \hat\X \ar^-{\hat f}[d] \ar[r]& \X  \ar^-{f}[d] \\
&&&& \hat S \ar[r] & S  \\
\Spec(A) \ar[rr]&&\hat S' \ar[r] \ar_-{a}[rru]&\hat S \x_S S' \ar_-{a \x_SS'}[ru] \ar[rrr] &&& S' \ar^-a[lu]
}\]
\end{proof}

For later induction arguments we needed the degrees of the $p$-alterations in \ref{alteration} to be $1$, but this is not even clear for relative curves. Instead we will prove the following resolution of singularities result for curves $X$ with ordinary double points concerning the existence of a simple normal crossing model $\X$. We then can inductively take models of $\X_{s,\rd}$ and so on.

\begin{Thm}
\label{sncmr-curves}
Let $X=\bigcup_i X_i$ be a scheme of $\dim X =1$ separated and of finite type and with simple normal crossings over the field of fractions $K$ of an excellent henselian discrete valuation ring $\O_K$ (i.e. the irreducible components $X_i$ are regular and meet each other transversally) and let $W \ins X$ be a finite set of closed points disjoint from the singular points of $X$.
Then there is a scheme $\X=\bigcup_i \X_i$ over $\O_K$ with $X \iso_K \X_\eta$ such that $\X$ and
\[\X^{[k]} := \coprod_{i_0 < i_1 < \dots < i_k} \X_{i_0} \x_\X \X_{i_1} \x_\X \dots \x_\X \X_{i_k}\]
for all $k \ge 0$ are flat, separated and of finite type over $\O_K$ with quasi-semi-stable reduction such that the  $\X^{[k]}$ are all regular and
$\ol W \cup \X_{s,\rd}$ and $\ol W \cup \X_{s,\rd}^{[k]}$ are simple normal crossing divisors, where $\ol W$ is the Zariski closure of $W$ in the corresponding scheme. If $X$ is proper/projective over $K$, then $\X$ can be chosen to be proper/projective over $\O_K$.
\begin{proof}
Let $s,\eta$ be the special and the generic point of $S=\Spec(\O_K)$, resp..
Let $X_i$ be the (regular) irreducible components of $X$ and $Z_i$ be the finite subscheme of intersection points on $X_i$ and $W_i=W \cap X_i$. By \cite{CJS09} Cor. 0.4 every $X_i$ has a regular model $\X_i'$. Now consider the divisor 
$D_i:=\X_{i,s}' \cup (\ol{Z_i\cup W_i}^{\X_i'})_\rd $ on $\X_i'$. 
Again using resolution of singularities \cite{CJS09} Cor. 0.4 + remark (!) there are blow-up morphisms $f_i: \X_i \to \X_i'$, such that $\X_i$ is regular, $f_i^{-1}(D_i)$ is a simple normal crossing divisor and $f_i$ is an isomorphism outside the singularities of $D_i$ (not only of $\X_i'$). In particular, we have that $\X_{i,\eta} \iso \X_{i,\eta}' \iso X_i$, and that $(\ol{Z_i}^{\X_i})_\rd$ is regular, i.e. for $z \in Z_i$ we have $\ol{\{z\}}^{\X_i}_\rd\iso \Spec(B)$, where $B$ is the unique henselian discrete valuation ring extension in $\k(z)$ (cf. \cite{Mil80} Thm. 4.2). This uniqueness result allows us to glue (cf. \cite{Scw05} Cor. 3.9) the $\X_i$ along the closed subschemes $(\ol{Z_i}^{\X_i})_\rd$ to get a scheme $\X$ with the wanted properties. Also see \cite{DasD}.
\end{proof}
\end{Thm}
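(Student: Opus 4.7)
The strategy is to build $\X$ by constructing a regular model of each irreducible component $X_i$ of $X$ individually and then gluing them along the nodes of $X$. To begin, each $X_i$ is a regular curve over $K$, so choosing any separated (resp.~projective if $X$ is proper) finite-type $\O_K$-model $\X_i^{(0)}$ of $X_i$ and applying the Cossart-Jannsen-Saito resolution of singularities for quasi-excellent surfaces (\cite{CJS09} Cor.~0.4) produces a regular model $\X_i' \to S := \Spec(\O_K)$ with $\X_{i,\eta}' \iso X_i$; flatness of $\X_i' \to S$ then follows from dominance and regularity.

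\textbf{Embedded resolution on each component.} Let $Z_i \ins X_i$ be the set of nodes of $X$ lying on $X_i$ and $W_i := W \cap X_i$. On $\X_i'$ I form the reduced divisor $D_i := \X_{i,s}' \cup (\ol{Z_i \cup W_i}^{\X_i'})_\rd$. A second invocation of CJS, in the embedded-resolution form noted in the remark after \cite{CJS09} Cor.~0.4, delivers a projective birational morphism $f_i : \X_i \to \X_i'$ such that $\X_i$ is regular, $f_i^{-1}(D_i)$ is a simple normal crossing divisor, and $f_i$ is an isomorphism over the regular locus of $D_i$. Because every $z \in Z_i \cup W_i$ is a generic point of a component of $D_i$ away from the special fibre, $f_i$ is an isomorphism at $z$, so $(\ol{\{z\}}^{\X_i})_\rd$ is a regular one-dimensional $\O_K$-scheme. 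Its local rings at closed points are thus DVRs in $\k(z)$ dominating $\O_K$, so $(\ol{\{z\}}^{\X_i})_\rd \iso \Spec(B_z)$ for a DVR $B_z \subset \k(z)$ extending $\O_K$.

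\textbf{Gluing via henselian uniqueness.} The pivotal observation is that since $\O_K$ is henselian, the valuation of $K$ extends \emph{uniquely} to every finite extension (\cite{Mil80} Thm.~4.2), so $B_z$ is intrinsically determined by $\k(z)$ and the embedding $\O_K \inj \k(z)$. In particular, for a node $z = X_i \cap X_j$ the two closures $(\ol{\{z\}}^{\X_i})_\rd$ and $(\ol{\{z\}}^{\X_j})_\rd$ are canonically identified with the same $\Spec(B_z)$. I then apply Schwede's gluing theorem \cite{Scw05} Cor.~3.9 to the disjoint union $\coprod_{z \in \mathrm{Sing}(X)} \Spec(B_z)$ of these identifications to produce a scheme $\X$ that is flat, separated and of finite type over $\O_K$, proper/projective whenever the input was, and with $\X_\eta \iso X$.

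\textbf{Verification and main obstacle.} It remains to check the statements about the $\X^{[k]}$. By construction the irreducible components of $\X$ are exactly the $\X_i$, so $\X^{[0]} = \coprod_i \X_i$ inherits regularity and the SNC special fibre from the embedded resolution; $\X^{[1]}$ is the disjoint union of the $\Spec(B_z)$ over the nodes $z$, hence regular; and $\X^{[k]} = \emptyset$ for $k \ge 2$ because $X$ is SNC of dimension one. The simple normal crossing conditions on $\ol W \cup \X_{s,\rd}$ and its analogues on each $\X^{[k]}$ follow from the SNC property of $D_i \supset \X_{i,s}' \cup \ol{W_i}^{\X_i'}$ arranged in Step~2. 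The step I expect to be the main obstacle is the gluing of Step~3: two independently chosen regular models of $X_i$ and $X_j$ a priori need not assign isomorphic scheme-theoretic closures to a common node $z$, and it is precisely the henselianness of $\O_K$ that rigidifies the situation and makes these closures canonically the same.
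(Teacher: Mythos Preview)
Your proposal is correct and follows essentially the same route as the paper's proof: CJS resolution for each component, CJS embedded resolution of the divisor $D_i = \X_{i,s}' \cup (\ol{Z_i \cup W_i}^{\X_i'})_\rd$, identification of the closure of each node with $\Spec(B_z)$ via the henselian uniqueness of valuation extensions, and gluing via Schwede. Your final verification paragraph for the $\X^{[k]}$ makes explicit what the paper leaves to the reader.
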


\section{Cohomological Hasse principle}

The main source for higher local fields and their class field theory are 
\cite{Kat7982}, \cite{Kat00}, \cite{Fes91a}, \cite{Fes91b}, \cite{Fes91c}, \cite{Fes91d}, \cite{Fes93}, 
\cite{Fes95}, \cite{Fes96}, \cite{FK00}, \cite{FV02} and \cite{Ras95} or \cite{Mor12} for a summary.

\begin{Def}[Higher local fields]\hfill
\label{hlf-def}
\begin{enumerate}
\item
  \begin{itemize}
    \item A {\emph {$0$}-local field} $F_0$ is defined to be a {\emph finite field}.
    \item A {\emph {$N$}-local field} $F_N$, $N \ge 1$, is a complete discrete valuated field whose residue field $F_{N-1}$ is a $(N-1)$-local field.
    \item So, a {\emph {$1$}-local field} $F_1$ is the same as a {\emph local field}.
   \end{itemize}
$ $ \\[-14pt]  
\noindent A \emph{higher local field} $F$ is a $N$-local field $F_N$ for some $N \ge 0$, and we will also address it with the tupel of its consecutive residue fields $(F=F_N, F_{N-1}, \dots, F_0)$. We usally put $p=\ch(F_0)$.
\item For every $1 \le m \le N$ let $\O_m=\O_m^{(1)}$ be the complete discrete valuation ring of $F_m$ 
and put $S_m=\Spec(\O_m^{(1)})$ with its generic point $\eta_m=\Spec(F_m)$ and special point $s_m=\Spec(F_{m-1})$. With $\eta_0=\Spec(F_0)$ we then have $s_{m+1}=\eta_m=\Spec(F_m)$ for all $0\le m\le N$.\\
\item Let $\Cat_m=\Sch_\sft(S_m)$ be the category of all schemes separated and of finite type over $S_m$ and put $\Cat_0=\Sch_\sft(F_0)$.
\item For $2\le i \le m$ inductively put 
$\O_m^{(i)}=\{x \in \O_m^{(1)}\;|\; \ol x \in \O_{m-1}^{(i-1)} \}$. 
Then $\O=\O_N^{(N)}$ is called the \emph{rank-$N$-valuation ring} of $F$. It is known (cf. \cite{Mor12}) to be a henselian valuation ring of dimension $N$, the residue fields of its ideals $0=\pp_N \subsetneq \pp_{N-1} \subsetneq \cdots \pp_0$ are exactly given by $(F_N, F_{N-1}, \dots, F_0)$. Its value group is $F^\x/\O^\x \iso \Z^N$ (lexicographically ordered). Put $\hat S = \Spec(\O_N^{(N)})$. 
\item For an irreducible scheme $f:\X \to S_m$ essentially of finite type over $S_m$ we define
\[ \dim_{S_m}\X := \trdeg\left(K(\X_\rd):K(\ol{f(\X)}_\rd)\right) + \dim \ol{f(\X)}, \] 
where $\ol{f(\X)}$ denotes the topological closure of the image of $\X$ in $S_m$ and $\dim$ the usual Krull dimension in the Zariski topology. For an integer $a \in \Z$, furthermore, put
\[\X_{(a)}:=\{ x \in \X \;|\; \dim_S\ol{\{x\}}=a  \}.\]
Note, for $x \in \X$ we have 
\[\dim_S\ol{\{x\}} = \trdeg(\k(x):\k(f(x)))+\dim \ol{\{f(x)\}}.\]
\end{enumerate}
\end{Def}

\begin{Rem}
\label{ext-val-rings}
Let $F_N$ be a $N$-local field and $F'_N|F_N$ be a finite extension.
The valuation rings in the construction of $F'_N$ are then obtained inductively as follows:
For $m=N, N-1, \dots, 1$ the integral closure of $\O_m^{(1)}$ in $F_m'$ is a complete discrete valuation ring ${\O'}_m^{(1)}$, which is finite (!) over $\O_m^{(1)}$ since $F'_m|F_m$ is finite and complete discrete valuation rings are excellent. The corresponding residue field extensions $F'_{m-1}|F_{m-1}$ are then also finite and the induction can continue.\\
Furthermore, the rank-$k$-valuation rings ${\O'}_m^{(k)}$ of $F_m'$ are then the integral closure of $\O_m^{(k)}$ in $F_m'$.
Indeed, since $\O_m^{(k)}$ is henselian its integral closure in $F'_m$ is a valuation ring, which is dominated by ${\O'}_m^{(k)}$. So they must coincide.
\end{Rem}

\begin{DefLem}[Étale homology theory]
\label{etale-hom-def}
Fix $1\le m \le N$ and an integer $n \in \Z$ not divisible by $\ch(F_{m-1})$.
 For $f: \X \to S_m$ in $\Cat_m$ we then define the \emph{étale homology groups} by
\[  \H_a^{(m)}(\X,\Z/n):=\H_a(\X|S_m,\Z/n):= \H^{m+2-a}_\et(\X,Rf^!\Z/n(m)_{S_m}), \]
where $\Z/n(b)_{S_m}$ is the sheaf $\mu_n^{\otimes b}$ on $S_m$ 
and $Rf^!$ is the right adjoint of $Rf_!$ defined in \cite{SGA4.3} XVIII, 3.1.4 or \cite{Fu11} §8.4. \\
Then by \cite{JS03}, \cite{KS10} or \cite{JSS09} this defines a homology theory $(H_a^{(m)})_{a \in \Z}$ on $\Cat_m$, the so called \emph{étale homology theory} over $S_m$, i.e. the $H_a^{(m)}$ are covariant functorial for proper maps
and contravariant functorial for open immersions in $\Cat_m$, and for every closed immersion in $\Cat_m$ there is a localization sequence which is functorial for open immersions and proper maps in $\Cat_m$.
\end{DefLem}

\begin{Prp} The étale homology theories have the following properties:
\begin{enumerate}
\item[\bf CMP:] When restricted to $\Sch_\sft(\eta_m)=\Sch_\sft(s_{m+1})$
the étale homology theories satisfy
\[ \H^{(m)} \iso \H^{(m+1)}[1]\]
in a natural way.
\item[\bf LVL:] 	$(\H_a^{(m)})_{a \in \Z}$ is \emph{leveled above $e=0$}, i.e. 
\[\H_a^{(m)}(\X)=0,\]
	whenever $\X$ is an affine, regular and connected scheme in $\Cat_m$ and 
	$a < \dim_{S_m}\X$. 
\item[ \bf NSS:] There is a first quadrant \emph{niveau spectral sequence} of homological type
\[ E^{(m),1}_{r,q}(\X,\Z/n)= \bigoplus_{x \in \X_{(r)}}\H^{m+r-q}_{\Gal}(\k(x),\Z/n(m+r-1)) \Longrightarrow \H_{r+q}^{(m)}(\X,\Z/n),\]
which is covariant for proper maps and contravariant for open immersions in $\Cat_m$. Furthermore, $E^{(m),1}_{r,q}(\X,\Z/n)$ is supported in the area $0 \le r \le \dim_{S_m}\X$ and $0 \le q \le m+r$.
\item[ \bf KH:] The \emph{Kato homology groups}
\[ \KH_a^{(m)}(\X,\Z/n):=E^{(m),2}_{a,0}(\X,\Z/n),\]
define a homology theory $(\KH_a^{(m)})_{a \in \Z}$ on $\Cat_m$ and the edge morphisms 
\[\epsilon^a_\X: \H_{a}^{(m)}(\X) \longrightarrow E^{(m),2}_{a,0}(\X)=\KH_a^{(m)}(\X)\]
induce a morphism of homology theories $\epsilon: \H^{(m)} \longrightarrow \KH^{(m)}$.

\item[\bf DSS:] For every $\X=\bigcup_{i \in I}\X_i$ in $\Cat_m$ covered by a finite ordered number of closed subschemes $\X_i \ins \X$, we have natural bounded convergent \emph{descent spectral sequences} of homological type
\[ \xymatrix{
 E^{(m),1}_{a,b}=\H_{b}^{(m)}(\X^{[a]}) \ar@{=>}[r] \ar^-\epsilon[d]& \H_{a+b}^{(m)}(\X) \ar^-\epsilon[d], \\
 'E^{(m),1}_{a,b}=\KH_b^{(m)}(\X^{[a]})  \ar@{=>}[r]& \KH_{a+b}^{(m)}(\X),}\]
which are naturally compatible via the edge morphisms, where we put
\[\X^{[k]} := \coprod_{i_0 < i_1 < \dots < i_k} \X_{i_0} \x_\X \X_{i_1} \x_\X \dots \x_\X \X_{i_k}\]
to be disjoint union of $k$-fold intersections of the corresponding subschemes, $k \ge 0$.

\item[\bf PB:] \emph{Pullback}: For any embeddable morphism $f : \X' \to \X$ (i.e. factors through a closed subscheme of a smooth one) between connected regular schemes in $\Cat_m$ with $\dim_{S_m}(\X') = \dim_{S_m}(\X)$ there exist functorial pullback morphisms $f^*$ on $H_a^{(m)}$ and $\KH_a^{(m)}$ compatible with the edge morphisms and extending the pullback for open immersions, and such that  $f_*\circ f^*$ is the multiplication by $\deg(\X'|\X)$ on $\KH_a^{(m)}(\X)$ whenever $f$ is proper and dominant, $a \in \Z$.

\item[\bf LC:] \emph{Lefschetz condition over finite fields}: For every irreducible regular $X$ projective over the finite field $F_0$ together with a simple normal crossing divisor $Y$ which contains an irreducible component $Y'$ such that $X \sm Y'$ is affine, the composite map
\[ H_{a}^{(1)}(X \sm Y) \stackrel{\epsilon}{\to} \KH_{a}^{(1)}(X\sm Y) \to \H_a(\Gamma_{X\sm Y})\]
is an isomorphism for all $a \le \dim X$, and injective for $a=\dim X+1$,
 where the last group is the weight homology group attached to $H^{(1)}$ (cf. \cite{KS10} §2).

\item[\bf SAL:] \emph{Strong affine Lefschetz}: For every connected regular $\X$ which is flat and projective over $S_m$ with quasi-semi-stable reduction and every $S_m$-flat divisor $W \ins \X$ such that $\X_{s_m,\rd} \cup W$ is a simple normal crossing divisor on $\X$ and such that 
there is an irreducible component $W' \ins W$ such that $\X\sm W'$ is affine, we have $\H_a^{(m)}(\X \sm W)=0$ for all $a \le \dim_{S_m}\X$ (and not only for $a < \dim_{S_m} \X$).
\end{enumerate}
\begin{proof}
({\bf CMP}): Let $f: X \to S_m$ factor through $g: X \to \eta_m=s_{m+1}$. And consider the open immersion $j: \eta_m \to S_m$ and closed immersion $i: s_{m+1} \to S_{m+1}$ and $f':=i \circ g : X \to S_{m+1}$. Then we have $Rf^!=Rg^! \circ j^*$ and ${Rf'}^!=Rg^!\circ i^!$ and therefore
\[\begin{array}{lll}
\H_{a-1}^{(m+1)}(X,\Z/n) &= & \H^{(m+1)+2-(a-1)}_\et(X,Rg^! \circ i^! \Z/n(m+1)_{S_{m+1}})\\
	&= & \H^{m+4-a}_\et(X,Rg^! \Z/n(m+1-1)_{s_{m+1}}[-2])\\
	&= & \H^{m+2-a}_\et(X,Rg^! \Z/n(m)_{\eta_m})\\
  &= & \H^{m+2-a}_\et(X,Rg^! \circ j^* \Z/n(m)_{S_m}) \\
  &= & \H_a^{(m)}(X,\Z/n),
\end{array}\]
using the absolute purity theorem \cite{Fuj02} for $i^!$.\\
({\bf LVL}): If $\X$ is regular, connected and $f$ is an embedable morphism and $\dim_{S_m} \X =d+1$, then by Gabber's absolute purity theorem \cite{Fuj02} and \cite{Fu11} §8.4, we have
\[ Rf^!\Z/n(m)_{S_m} \iso \Z/n(m+d)[2d]_\X,  \]
and therefore
\[ \H_a^{(m)}(\X,\Z/n) \iso \H^{2d+m+2-a}_\et(\X,\Z/n(m+d)_\X).  \tag{O}\]
Also see \cite{SS10} Lem. 1.8. If, furthermore, $\X$ is affine, we have 
\[\cd \X = \dim_{S_m} \X + \cd F_{m-1} =d+1+m\]
by Gabber's affine Lefschetz theorem (cf. \cite{Ill03} Thm. 2.4). So $\H_a^{(m)}(\X)$ vanishes for $a < \dim_{S_m} \X$ for $\X$ affine.\\
({\bf NSS}): By \cite{BO74} §3 and \cite{JS03} §2 there always is a functorial niveau spectral sequence
\[ E^1_{r,q}(\X)= \bigoplus_{x \in \X_{(r)}} \H_{r+q}(x) \quad\Longrightarrow\quad \H_{r+q}(\X),  \]
with $\H_a(x) := \inlim_{x \in V \ins \ol{\{x\}}} \H_a(V)$
for $x \in \X$, where the direct limit is taken over all open non-empty subschemes $V \ins \ol{\{x\}}$ and all (descending) inclusion maps. The direct limit can then in our special case be computed by (O). Note that if $x \in \X_{(r)}$ then $\cd \kappa(x) \le m+r$ (cf. \cite{Ser02} Ch.II §4-6). \\
({\bf KH}): This is also a general construction. The \emph{Kato complex of $\X$ w.r.t. $\H$} is definited to be the bottom line of the niveau spectral sequence, i.e. 
 \[C(\X):\ E^1_{0,e}(\X) \stackrel{d^1}{\longleftarrow} E^1_{1,e}(\X) \stackrel{d^1}{\longleftarrow} E^1_{2,e}(\X) \stackrel{d^1}{\longleftarrow} \cdots \stackrel{d^1}{\longleftarrow} E^1_{d,e}(\X),\]
for $e=0$, where $d=\dim_{S_m}\X$, $d^1$ are the differentials of the $E^1$-layer, and $E^1_{a,e}(\X)$ is put in degree $a$. The Kato complex is then functorial like $H$. For a closed immersion $i: Y \climm \X$ with open complement $j: V \opimm \X$ we have a functorial exact sequence (by definition of $\dim_{S_m}$):
 \[0 \longrightarrow C_a(Y) \stackrel{i_*}{\longrightarrow} C_a(\X) \stackrel{j^*}{\longrightarrow}
 C_a(V) \longrightarrow 0.\]
Its homology groups then give the Kato homology groups and the functorial long exact localization sequence.\\
({\bf DSS}): The bottom spectral sequence can be obtained as the hypercohomology spectral sequence of the obvious double complex of the $E^1$-layer of the niveau spectral sequence using the naive filtration (cf. \cite{Wei95} §5.6.2). 
For the upper spectral sequence first let $i^{[r]}: \X^{[r]} \to \X$ be the canonical map. Then for every étale sheaf $F$ on $\X$ there is an exact complex
\[ 0 \to F \to i^{[0]}_*i^{[0]*}F \to i^{[1]}_*i^{[1]*}F \to \dots \to i^{[n-1]}_*i^{[n-1]*}F \to 0, \]
where the differential maps are given by the alternating sums of the restriction maps. This can be proven by induction on the number of closed subschemes. For $n=1$ this is trivial. The induction step is achieved by the exact sequence of sheaves
\[ 0 \to F \to i_*i^*F \oplus j_*j^*F \to k_* k^*F \to 0,  \]
where $i: \hat \X \climm \X$ with $\hat \X = \bigcup_{i=1}^{n-1} \X_i$ and $j: \X_n \climm \X$ and $k: \hat \X \cap \X_n \climm \X$. The exactness is immediately seen on the stalks.\\
Now let $I$ be an injective representative of $Rf^!\Z/n(m)_{S_m}$, where $f: \X \to S_m$ is the structure morphism. 
Then for any étale sheaf $F$ on $\X$ the complex
\[\dots \Hom(i^{[\bullet]}_*i^{[\bullet]*}F, I) \to \Hom(F,I) \to 0 \]
is exact by the sequence above and the injectivity of $I$. By adjunction 
\[ \Hom(i^{[k]}_*i^{[k]*}F,I) = \Hom(F, i^{[k]}_*Ri^{[k]!}I),  \]
we get an exact complex 
\[\dots \Hom(F, i^{[\bullet]}_*Ri^{[\bullet]!}I) \to \Hom(F,I) \to 0 \]
for any étale sheaf $F$. Therefore the complex
\[ \dots \to i^{[1]}_*Ri^{[1]!}I \to i^{[0]}_*Ri^{[0]!}I \to I \to 0 \]
is exact. From this we get a double complex. The spectral sequence is then obtained as the hypercohomology spectral sequence, cf. \cite{Wei95} Ch. 5, for the naive filtration.\\
({\bf PB}): The pullback construction can be found in \cite{KS10} §4.\\
({\bf LC}): The Lefschetz condition was proved in \cite{KS10} Thm. 3.5 (also cf. \cite{SS10}, \cite{JS08}) using weight arguments (cf. \cite{Del80}).\\
({\bf SAL}): Let $W=\bigcup_{i=1}^rW_i$ be the irreducible components and $W_1$ such that $\X \sm W_1$ is affine. We do induction on $d+1=\dim_{S_m} \X$ and $r$:
 For $d > 0$ and $r >1$ the exact localization sequence gives:
\[ 0\stackrel{ r-1}{=}\H_a^{(m)}(\X \sm \bigcup_{i=1}^{r-1}W_i) \to 
\H_a^{(m)}(\X \sm W) \to 
 \H_a^{(m)}(W_r \sm \bigcup_{i=1}^{r-1}(W_i \cap W_r)\stackrel{d}{=}0,  \]
since $W_r \sm (W_1 \cap W_r)$ is affine and $\dim_{S_m}W_r=d$. So 
$\H_a^{(m)}(\X \sm W)=0$ and we are reduced to the case $d=0$ or $r=1$.
For $d=0$ then $\X$ is finite over $S_m$ and therefore affine itself.
For $r=1$ and $U=\X \sm W_1$ the base change \ref{base-change} gives us
\[ \begin{array}{lll}
\H_{d+1}^{(m)}(U,\Z/\ell^r) &\iso& \H^{d+1+m}_\et(U,\Z/\ell^r(m+d+1)) \\
	&\iso& \H^{m+d+1}_\et(U_{s,\rd},\Z/\ell^r(m+d+1)),
\end{array} \]
which vanishes by affine Lefschetz theorem \cite{Mil80} VI Thm. 7.2 and 
$m+d+1 > m+d=\cd(F_{m-1})+\dim (U_{s,\rd})$.
\end{proof}
\end{Prp}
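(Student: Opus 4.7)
The plan is to verify the eight properties in turn, leveraging Gabber's absolute purity \cite{Fuj02} together with the standard Bloch-Ogus niveau machinery, Deligne weights, and Artin's affine Lefschetz theorem at the appropriate stages.

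For CMP, I would factor a morphism $f:X \to S_{m+1}$ landing in $s_{m+1} = \eta_m$ as $X \stackrel{g}{\to} \eta_m \stackrel{i}{\to} S_{m+1}$; absolute purity yields $i^!\Z/n(m+1)_{S_{m+1}} \iso \Z/n(m)_{\eta_m}[-2]$, while on the $S_m$-side $Rf^! = Rg^! \circ j^*$ for $j:\eta_m \opimm S_m$ the open immersion, and the two homology groups match with the claimed degree shift. For LVL I rewrite via purity $\H_a^{(m)}(\X,\Z/n) \iso \H^{2d+m+2-a}_\et(\X,\Z/n(m+d))$ for regular connected $\X$ of $\dim_{S_m}\X = d+1$, and invoke Gabber's affine Lefschetz \cite{Ill03} to bound $\cd \X \le \dim_{S_m}\X + \cd F_{m-1} = d+m+1$, whence vanishing follows in the claimed range. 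The niveau spectral sequence NSS is the standard Bloch-Ogus construction \cite{BO74}, with stalks $\H_a(x):=\inlim_{V\ins \ol{\{x\}}}\H_a(V)$ computed via the purity formula and Serre's bound $\cd\k(x) \le m+r$ for $x \in \X_{(r)}$ \cite{Ser02} pinning down the support. KH is then formal: the Kato complex is the bottom row, functoriality is inherited, and the localization triangle descends from the $E^1$-level short exact sequence $0 \to C_\bullet(Y) \to C_\bullet(\X) \to C_\bullet(V) \to 0$ induced by the dimension filtration.

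For DSS, the Kato descent sequence arises from the hypercohomology spectral sequence of the evident $E^1$-double complex under the naive filtration. The étale descent sequence needs a dual construction: for any étale sheaf $F$ on $\X = \bigcup_i \X_i$ I build an exact resolution $0 \to F \to i^{[0]}_* i^{[0]*}F \to i^{[1]}_* i^{[1]*}F \to \cdots$ by induction on the number of components, the step being the Mayer--Vietoris sequence $0 \to F \to i_*i^*F \oplus j_*j^*F \to k_*k^*F \to 0$ (with $i,j$ including the first $n-1$ and the last component and $k$ their intersection), verified stalkwise. Applying $\Hom(-,I)$ for an injective resolution $I$ of $Rf^!\Z/n(m)_{S_m}$ and adjointing produces an exact complex $\cdots \to i^{[1]}_*Ri^{[1]!}I \to i^{[0]}_*Ri^{[0]!}I \to I \to 0$, whose hypercohomology is the desired spectral sequence; compatibility with the Kato variant via the edge maps is built into the construction. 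For PB and LC I would cite \cite{KS10} directly, the latter resting on Deligne's weight theory \cite{Del80}.

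The principal hurdle is SAL, asking for $\H_a^{(m)}(\X \sm W) = 0$ at the boundary degree $a = \dim_{S_m}\X$, one beyond the LVL range. I propose a double induction on $d+1 = \dim_{S_m}\X$ and on the number $r$ of components of $W$: for $r \ge 2$ the localization sequence for $\X \sm W \subset \X \sm \bigcup_{i<r}W_i$ sandwiches the target between the left term, vanishing by induction on $r$, and the right term, namely the homology of $W_r \sm \bigcup_{i<r}(W_i \cap W_r)$, of $S_m$-dimension $d$ with $W_r \sm (W_r \cap W_1)$ still affine, vanishing by induction on $d$. The remaining base cases are $d=0$ (where $\X$ is finite over $S_m$, hence affine) and $r=1$. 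In the latter case, a base change argument replaces $U = \X \sm W_1$ by its reduced special fibre $U_{s,\rd}$, on which Artin's affine Lefschetz theorem \cite{Mil80} yields vanishing because $m+d+1$ strictly exceeds $\cd F_{m-1} + \dim U_{s,\rd} = m+d$. The numerical crux is that the drop from the cohomological dimension of the residue field to the dimension of the special fibre exactly pays for the extra degree of vanishing that SAL demands beyond LVL.
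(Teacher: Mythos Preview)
Your proposal is correct and follows essentially the same route as the paper's proof in all eight parts: the purity computation for CMP, the Gabber affine Lefschetz bound for LVL, the Bloch--Ogus construction for NSS and KH, the Mayer--Vietoris resolution and adjunction argument for DSS, the citations to \cite{KS10} for PB and LC, and the double induction on $d$ and $r$ for SAL. The one place where the paper is slightly more explicit is the $r=1$ base case of SAL: the ``base change argument'' you invoke is a specific comparison lemma (a generalization of Rapoport--Zink \cite{RZ82} via Gabber purity, stated separately in the paper) asserting that $\H^q_\et(U,\Z/\ell^r(b)) \to \H^q_\et(U_{s,\rd},\Z/\ell^r(b))$ is an isomorphism under the quasi-semi-stable hypothesis, but your outline already anticipates this step correctly.
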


\begin{Lem} 
\label{base-change}
Let $\O$ be a henselian discrete valuation ring, $S=\Spec(\O)$ with generic point $\eta$ and special point $s$ and let $\ell$ be a prime invertible on $S$. Let $\X$ be regular, pure dimensional scheme flat and projective over $S$ with quasi-semi-stable reduction and $W \ins \X$ a flat sncd such that $U:=\X \sm W$ is affine and $\X_{s,\rd} \cup W$ is a sncd of $\X$. 
Then the pull-back map
\[  \H_\et^q(U,\Z/\ell^r(b)) \longrightarrow  \H_\et^q(U_{s,\rd},\Z/\ell^r(b)) \]
is an isomorphism for any integers $r \ge 1$, $q,b \in \Z$.
\begin{proof}
This is a generalization of \cite{RZ82} Satz. 2.19 by using Gabber's purity theorem \cite{Fuj02} Thm. 2.1.1. The proof is given in \cite{SS10} Lem. 3.4. 
\end{proof}
\end{Lem}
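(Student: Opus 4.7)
The plan is to combine proper base change for the proper map $f\colon \X\to S$ with a local computation of higher direct images along the open immersion $j\colon U\climm \X$, using Gabber's absolute purity. Write $i\colon \X_{s,\rd}\climm \X$ for the closed immersion and $j'\colon U_{s,\rd}\opimm \X_{s,\rd}$ for the induced open immersion, and set $\Lambda:=\Z/\ell^r(b)$.

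First, since $f$ is proper and $\O$ is henselian, proper base change together with the Leray spectral sequence for $j$ gives natural isomorphisms
\[
 \H^q_\et(U,\Lambda)\;=\;\H^q_\et(\X,Rj_*\Lambda) \;\bij\; \H^q_\et(\X_{s,\rd},i^*Rj_*\Lambda).
\]
It therefore suffices to show that the natural base-change morphism
\[
 i^*Rj_*\Lambda \longrightarrow Rj'_*\Lambda
\]
is a quasi-isomorphism of complexes on $\X_{s,\rd}$, since the right hand side above then identifies with $\H^q_\et(U_{s,\rd},\Lambda)$.

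The quasi-isomorphism is a local assertion on $\X$, so by the qss hypothesis we may model $\X$ \'etale-locally on $\Spec\bigl(\O[X_1,\dots,X_s]/(t-X_1^{e_1}\cdots X_r^{e_r})\bigr)$ with $\gcd(e_1,\dots,e_r,p)=1$; in these coordinates $\X_{s,\rd}=V(X_1\cdots X_r)$ and $W\cup\X_{s,\rd}=V(X_1\cdots X_{r''})$ for some $r''\ge r$. Since $\ell$ is invertible on $S$, Gabber's absolute purity theorem \cite{Fuj02} computes both $Rj_*\Lambda$ on $\X$ and $Rj'_*\Lambda$ on $\X_{s,\rd}$ as iterated extensions, filtered by depth of intersection with respect to the ambient sncd, with graded pieces of the form $(\iota_I)_*\Lambda(-|I|)[-|I|]$ indexed by multi-indices $I$ of divisor components. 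Pulling back the former description along $i$ yields the latter term by term.

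The main technical obstacle is exactly this commutation of $i^*$ with $Rj_*$. In the pure semistable case (no horizontal divisor, $W=\X_{s,\rd}$) it is Rapoport--Zink, Satz~2.19, and the extension to the present quasi-semi-stable situation with an additional flat horizontal sncd component $W$ proceeds along the same lines, which is carried out in Saito--Sato, Lemma~3.4. The only change relative to the original Rapoport--Zink argument is the use of Gabber's purity to handle the non-smooth local model $t=X_1^{e_1}\cdots X_r^{e_r}$ for arbitrary exponents $e_i$ coprime to $p$, in place of the classical smooth purity of Artin.
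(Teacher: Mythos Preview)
Your proposal is correct and follows the same approach as the paper, which simply cites \cite{SS10} Lem.~3.4 as a generalization of \cite{RZ82} Satz~2.19 via Gabber's purity \cite{Fuj02}; you have unpacked precisely this argument (proper base change for $f$ to reduce to the local comparison $i^*Rj_*\Lambda\simeq Rj'_*\Lambda$, then absolute purity in the quasi-semi-stable local model). One small slip: in your parenthetical on the Rapoport--Zink case you write ``$W=\X_{s,\rd}$'', but $W$ is by hypothesis the flat (horizontal) part, so the pure Rapoport--Zink situation corresponds rather to $W=\emptyset$ with $j$ the inclusion of the generic fibre; this does not affect the argument.
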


\begin{Thm}[Kato's conjecture about the cohomological Hasse principle]
\label{etale-hasse}
Let $(F_N, \dots, F_0)$ be an $N$-local field with $N\ge1$, $p=\ch(F_0)$ and its complete discrete valuation ring $\O_N$, $S=\Spec(\O_N)$.
Let $n$ be an integer not divisible by $p$. Let $f: \X \to S$ be a flat and proper map from a regular scheme $\X$ to $S$, $\dim_S\X=d+1$. Then we have:
\[ \textbf{KC}(N,d+1):\qquad \KH_a^{(N)}(\X,\Z/n)=0\qquad \forall a \in \Z. \]
That means that the Kato complex from \cite{Kat86}:
\[\begin{array}{llllll} 
0 &\longleftarrow&\ds \bigoplus_{x \in \X_{(0)}} \H^N(\k(x),\Z/n(N-1))& \longleftarrow &\ds\bigoplus_{x \in \X_{(1)}} \H^{N+1}(\k(x),\Z/n(N))\\
\cdots& \longleftarrow &\ds\bigoplus_{x \in \X_{(d+1)}} \H^{N+d+1}(\k(x),\Z/n(N+d))&\longleftarrow &0 
\end{array}\]
is exact.
\begin{proof} 
We can assume $\X$ to be integral with $\dim_S\X=d+1$ and $n=\ell^r$ for a prime $\ell \neq p$. We will then do induction on $N+d+a$ and reduce to varieties over finite fields.\\
For $d=0$ the scheme $\X$ is a finite extension of $S$. Let $\eta$ be the generic point and $s$ the special point of $\X$. Then the Kato complex is the middle two terms of the exact localization sequence
\[ \H_1^{(N)}(\X) \to \H_1^{(N)}(\eta) \to \H_0^{(N)}(s) \to \H_0^{(N)}(\X), \]
where the first and last term here vanishes by {\bf SAL}. So $\KH_a^{(N)}(\X)=0$ for $a=0,1$ and for all other $a$ trivially by dimension.\\
 For $d >0$ let $\alpha \in \KH_a^{(N)}(\X)$. We suppress the index $(N)$ in the following $E$-terms.\\
Since $E^{r}_{s,t}=0$ for $s <0$ and 
\[E^{r+1}_{a,0}\iso\ker \left(E^r_{a,0} \stackrel{d^r}{\longrightarrow} E^{r}_{a-r,r-1}\right),\]
the edge morphism for every open subscheme $U \ins \X$ factors as follows:
\[\H_a^{(N)}(U) \srj E^\infty_{a,0}(U)\iso E^{a+1}_{a,0}(U)  \cdots \inj E^{3}_{a,0}(U) \inj E^{2}_{a,0}(U) = \KH_a^{(N)}(U). \]
By successively cutting out the support of $d^2(\alpha)$, $d^3(\alpha)$, \dots, $d^a(\alpha)$ one can find a closed subscheme $W \ins \X$ of dimension $\dim_S W \le a-2$, $U:=\X\sm W$, such that $\alpha|_U$ lies in the image of $\H_a^{(N)}(U)$ under the edge map. Let $\beta \in \H_a^{(N)}(U)$ be a preimage. \\
By applying \ref{alteration} $\X$ can be assumed to be projective over $S$, let say $f$ factors through $\X \inj \P^m_S \to S$. 
Now let $\O_N^{(N)}$ be the rank-$N$-valuation ring of $F_N$ and $\hat S =\Spec(\O_N^{(N)})$ and $\hat \X$ be the closure of $\X$ in $\P^m_{\hat S}$ and $\hat Z$ the one of $W$. Applying \ref{alteration} there is a $p$-alteration  $b: \hat \X' \to \hat \X$ such that $\hat \X'$ has poly-quasi-semi-stable reduction over a valuation ring of a finite $p$-extension of  $\Quot(\O_N^{(N)})$ which is integral over $\O_N^{(N)}$.
The reduced structure of the pullback over $S_m$  will then be projective 
as explained in \ref{alt-rem}.
 Furthermore, we have a simple normal crossing divisor $\hat Z'' =  b^{-1}(\hat Z) \cup \hat Y'$, where $\hat Y'$ is the preimage of the codimension one closed subset of $\hat S$. 
After applying the Bertini theorem \ref{bertini} to $\hat \X'$ the $\hat Z''$ contains an irreducible component $H$ whose complement in $\hat \X'$ is affine and the very special fibres of all multiple intersections are cut transversally by $H$.\\
$\X':=(\hat \X' \x_{\hat S} S)_\rd$ is then regular by \ref{pqss-red-rem} with quasi-semi-stable reduction and simple normal crossing divisor $W':=(\hat Z''\x_{\hat S} S)_\rd$ and $\pi:=b_S: \X' \to \X$ is a projective $p$-alteration. 
Put $U'=\X'\sm W'$, $U''=\X'\sm \pi^{-1}(W)$ and consider the following diagram (all over $S=S_N$), which is commutative by the {\bf PB}-condition:
\[\xymatrix{\KH_{a+1}^{(N)}(U') \ar@{->>}^-{\delta}[rd] \\
\H_{a+1}^{(N)}(U')\ar^-{\epsilon}[u]\ar@{-->>}[r]&\KH_a^{(N)}(W') \ar^-{=0}[rd]\\
\alpha \ar@{|->}[d]\ar@{}[r]|{\in} &\KH_a^{(N)}(\X) \ar@{^(->}^-{\pi^*}[r] \ar[d] & \KH_a^{(N)}(\X') \ar[d]\ar@{^(->}^-{j^*}[rd] \ar@{->>}^-{\pi_*}[r]& \KH_a^{(N)}(\X) \\
\alpha|_U \ar@{}[r]|{\in} &\KH_a^{(N)}(U) \ar^-{\pi^*}[r] & \KH_a^{(N)}(U'') \ar[r] & \KH_a^{(N)}(U') \ar^-{\delta}[rd]\\
\beta \ar@{|->}[u]\ar@{}[r]|{\in} &\H_a^{(N)}(U) \ar^-{\epsilon}[u]\ar^-{\pi^*}[r] & \H_a^{(N)}(U'') \ar[r]\ar^-{\epsilon}[u] & \H_a^{(N)}(U')\ar^-{\epsilon}[u] \ar@{^(-->}[r] &\KH_{a-1}^{(N)}(W')
}\]
Now assume that the upper dotted line is surjective and the lower dotted line is injectiv for $0 \le a \le d+1$: 
Since the diagonal sequence is exact, the element $\pi^*(\alpha)$ maps to $0 \in \KH_{a-1}^{(N)}(W')$. So the same holds for 
$\pi^*(\beta)|_{U'}$. By assumption the dotted bottom line is injective, so $\pi^*(\beta)|_{U'}=0$ and then also $j^*(\pi^*(\alpha))=0$. The fact that the top dotted line is surjective by assumption implies that $j^*$ is injective, so already $\pi^*(\alpha)=0$.
Furthermore, the compositum $\pi_*\circ \pi^*: \KH_a^{(N)}(\X) \to \KH_a^{(N)}(\X)$ is multiplication by the degree by {\bf PB}, which is prime to $\ell$. Because $\KH_a^{(N)}(\X)$ is $\ell$-primary torsion, it follows that the map $\pi_*\circ \pi^*$ is an isomorphism and $\pi^*$ is injective, so $\alpha=0$ and the claim follows.\\
For the statement of the dotted lines let 
$\hat Z'$ be the flat part of $\hat Z''$ and $\hat U':=\hat \X' \sm \hat Z'$.
$Z':=\hat Z'_{S_N}$ is then the flat part of $W'= \hat Z'_{S_N} \cup \hat \X'_{N-1}$ and
\[ U'=\X'\sm W'=\hat U'_N,\qquad  W'\sm Z'= \hat U'_{N-1}, \qquad \X' \sm Z'=\hat U'_{S_N}, \]
where the index indicates the basechange with $F_{N}$, $F_{N-1}$, $S_N$, resp., in the reduced structure.\\
For $m=N$ consider the (anti-)commutative diagram, where the second column is the dotted line from above:
\[\xymatrix{
\H_a^{(m)}(\hat U'_{S_m}) \ar^-{j*}[r]\ar^-\epsilon[d]& \H_a^{(m)}(\hat U'_m) \ar^-{\delta}[r]\ar^-\epsilon[d]&
 \H_{a-1}^{(m)}(\hat U'_{m-1}) \ar^-\epsilon[d]\ar^-{i_*}[r]& \H_{a-1}^{(m)}(\hat U'_{S_m})\ar^-\epsilon[d]\\
\KH_a^{(m)}(\hat U'_{S_m}) \ar^-{j*}[r]\ar^-\delta[d]& \KH_a^{(m)}(\hat U'_m) \ar^-{\delta}[r] \ar^-\delta[d]&
\KH_{a-1}^{(m)}(\hat U'_{m-1})\ar@{=}[d]\ar^-{i_*}[r] & \KH_{a-1}^{(m)}(\hat U'_{S_m})\ar^-\delta[d]\\
\KH_{a-1}^{(m)}(\hat Z'_{S_m})\ar^-{i_*}[r]&
\KH_{a-1}^{(m)}(\hat Z'_{S_m} \cup \hat \X'_{m-1}) \ar^-{j*}[r]&
\KH_{a-1}^{(m)}(\hat U'_{m-1})\ar^-{\delta}[r] & \KH_{a-2}^{(m)}(\hat Z'_{S_m}).
}\]
Since $\dim_SW'=d$ we have $\KH_{d+1}^{(N)}(W')=0$ and the surjectivity of
\[ H_{d+2}^{(N)}(\hat U'_N) \to H_{d+2}^{(N)}(\hat U'_N) \to
\KH_{d+1}^{(N)}(\hat Z'_{S_N} \cup \hat \X'_{N-1})= \KH_{d+1}^{(N)}(W')=0 \]
 is trivial. \\
Now by {\bf SAL} and the previous usage of Bertini we have 
\[\H_q^{(N)}(\hat U'_{S_N})=0 \qquad  \forall q \le d+1,\]
and the upper $\delta$ is an isomorphism for $a \le d+1$. \\
Separating $\hat Z'_{S_N}$ into its irreducible components, the induction hypothesis {\bf KC}(N,d) and by {\bf DSS} we have
\[ \KH_{q}^{(N)}(\hat Z'_{S_N}) \stackrel{r \ge 0}{\Longleftarrow} 
\KH_{q-r}^{(N)}((\hat Z'_{S_N})^{[r]})=0,\]
and ergo $\KH_{q}^{(N)}(\hat Z'_{S_N}) = 0$ for all $q \in \Z$ and the lower $j^*$ is an isomorphism. \\
It is then enough to show that the middle right 
$\epsilon: \H_{a-1}^{(m)}(\hat U'_{m-1}) \to \KH_{a-1}^{(m)}(\hat U'_{m-1})$ 
is an isomorphism for $m=N$ and $a \le d+1$. 
By {\bf CMP} and {\bf DSS} we have for $m=N$:
\[\xymatrix{
  \H_{a-1}^{(m)}(\hat U'_{m-1}) \ar^-\epsilon[d] \ar@{=}[r]&  \H_{a}^{(m-1)}(\hat U'_{m-1}) \ar^-\epsilon[d] \ar@{<=}^{r \ge 0}[r]&  
	\H_{a-r}^{(m-1)}((\hat U'_{m-1})^{[r]}) \ar^-\epsilon[d]\\
 \KH_{a-1}^{(m)}(\hat U'_{m-1}) \ar@{=}[r]&  \KH_{a}^{(m-1)}(\hat U'_{m-1})  \ar@{<=}^{r \ge 0}[r]&  \KH_{a-r}^{(m-1)}((\hat U'_{m-1})^{[r]})
}\]
and an isomorphism on the right for fixed $a$ and all $r \ge 0$ would imply the isomorphism on the left. 
Note that by \ref{pqss-red-rem} $W'\sm Z'=\hat U'_{N-1}$ has simple normal crossings and so $(\hat U'_{N-1})^{[r]}=(W'\sm Z')^{[r]}$ is of pure dimension $d-r$.\\
By \ref{pqss-red-rem} the closure $\hat V$ of every irreducible component $V$ of $(\hat U'_{N-1})^{[r]}$ in $\hat \X'$ again has poly-quasi-semi-stable reduction over the rank-$(N-1)$-valuation ring of $F_{N-1}$. We then can consider the diagram for $m=N-1$ and $\hat V$ instead of $\hat \X'$. As before the bottom $j^*$ and the upper $\delta$ are isomorphisms.
Furthermore, the induction hypothesis {\bf KC}($N-1,d+1$) implies that $\KH_q(\hat V_{S_{N-1}})=0$  for every $q \in \Z$ and so the middle left $\delta$ is an isomorphism as well. So we can continue this process and end up over varieties over finite fields. It is then left to prove:\\
For every connected variety $X$ of dimension $d$ smooth and projective over a finite field with a simple normal crossing divisor $D$ which contains an irreducible component $D'$ such that $X\sm D'$ is affine the edge morphism
\[ \epsilon:\; \H_a^{(1)}(X \sm D) \to \KH_{a}^{(1)}(X \sm D) \]
is an isomorphism for all $a \le d$.\\
But this is implied by {\bf LC} (cf. \cite{KS10} Lem. 6.6, \cite{KS10} Cor. 6.3 and the LG-condition there).
And the proof is complete.
\end{proof} 
\end{Thm}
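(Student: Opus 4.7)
The plan is to proceed by a double induction on $N$ and on the relative dimension $d$, reducing the statement for schemes over $\O_N$ first to poly-quasi-semi-stable models, then through the chain of residue fields down to varieties over the finite field $F_0$, where the Lefschetz condition {\bf LC} supplies the base case. First I would make the standard reductions: since $\KH_a^{(N)}(\X,\Z/n)$ is $n$-torsion, a prime decomposition lets us assume $n=\ell^r$ for some $\ell\neq p$; separating irreducible components and taking the reduced structure allows us to assume $\X$ is integral. The base case $d=0$, where $\X$ is a finite $S$-scheme, follows from the localization sequence applied to the open/closed decomposition by generic and special point, together with the vanishing from {\bf SAL} on those point spectra.

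For the inductive step, fix $\alpha\in\KH_a^{(N)}(\X)$. Since the differentials on the $E^r$-layer of the niveau spectral sequence decrease the $r$-coordinate, by successively excising the supports of $d^2(\alpha),d^3(\alpha),\ldots,d^a(\alpha)$ I can find an open $U\ins\X$ whose complement has dimension at most $a-2$ and such that $\alpha|_U$ lifts to a class $\beta\in\H_a^{(N)}(U)$ under the edge morphism $\epsilon$. Applying the $p$-alteration theorem \ref{alteration} to the closure $\hat\X$ of $\X$ inside some $\P^m_{\hat S}$ (with $\hat S=\Spec(\O_N^{(N)})$) produces a projective $p$-alteration $b:\hat\X'\to\hat\X$ whose fibres over each $S_m$ have poly-quasi-semi-stable reduction with simple normal crossing divisor $\hat Z''=b^{-1}(\hat Z)\cup\hat Y'$. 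By the Bertini theorem \ref{bertini} I further arrange that $\hat Z''$ contains an irreducible component $H$ whose complement in $\hat\X'$ is affine and which cuts every very special fibre transversally. Base-changing to $S$ and taking the reduced structure yields a regular $\X'$ with quasi-semi-stable reduction, an SNCD $W'$, and a projective $p$-alteration $\pi:\X'\to\X$.

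The heart of the argument is then a diagram chase comparing $\H^{(N)}_\bullet$ and $\KH^{(N)}_\bullet$ on $\X$, $\X'$, $U$, $U''=\X'\sm\pi^{-1}(W)$, $U'=\X'\sm W'$ and $W'$. Three ingredients drive it. Firstly, {\bf SAL} applied to the affine open $U'$ (whose existence Bertini was invoked to guarantee) yields $\H_q^{(N)}(U')=0$ for all $q\le d+1$, making the boundary $\delta$ into $\KH_a^{(N)}(W')$ carry the relevant lifts. Secondly, decomposing the flat part $\hat Z'_{S_N}$ into irreducible components and applying the descent spectral sequence {\bf DSS} together with the induction hypothesis {\bf KC}$(N,d)$ on each multiple intersection gives $\KH_q^{(N)}(\hat Z'_{S_N})=0$ for all $q$. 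Thirdly, on the vertical part $\hat U'_{N-1}$ the isomorphism {\bf CMP} identifies $\H^{(N)}[1]$ with $\H^{(N-1)}$, and {\bf DSS} together with the induction hypothesis {\bf KC}$(N-1,d+1)$ applied to every multiple intersection $(\hat U'_{N-1})^{[r]}$ shows that $\epsilon$ is an isomorphism in the required range. Chasing the diagram produces $\pi^*(\alpha)=0$; since $\pi_*\circ\pi^*$ is multiplication by the degree of $\pi$ by {\bf PB}, which is a power of $p$ by \ref{alt-rem} and hence invertible on the $\ell$-primary group $\KH_a^{(N)}(\X)$, this forces $\alpha=0$.

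The hard part, and the genuine arithmetic input, is the terminus of the induction: at the bottom one is reduced to showing, for a connected $d$-dimensional smooth projective variety $X$ over the finite field $F_0$ with a simple normal crossing divisor $D$ containing an affine-complement component, that the edge morphism $\epsilon:\H_a^{(1)}(X\sm D)\to\KH_a^{(1)}(X\sm D)$ is an isomorphism for $a\le d$. This is exactly the Lefschetz condition {\bf LC}, proved by Kerz and Saito via Deligne weight arguments combined with the Bloch-Kato conjecture. Once the alteration/Bertini/{\bf SAL} machinery has propagated the problem down through all $N$ residue levels via {\bf CMP} and the descent spectral sequence, this finite-field input closes the induction and completes the proof.
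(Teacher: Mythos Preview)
Your proposal is correct and follows essentially the same approach as the paper's own proof: the same induction scheme, the same alteration/Bertini setup over the rank-$N$ valuation ring, the same three ingredients ({\bf SAL}, {\bf DSS} with {\bf KC}$(N,d)$, {\bf CMP}/{\bf DSS} with {\bf KC}$(N-1,d+1)$), the {\bf PB} argument to conclude $\alpha=0$, and the reduction to {\bf LC} over the finite field.

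One small point of precision: you write that {\bf SAL} is applied to $U'=\X'\sm W'$ to obtain $\H_q^{(N)}(U')=0$, but $U'=\hat U'_N$ lives over $F_N$, not over $S_N$, and {\bf SAL} as stated concerns complements of $S_m$-flat divisors in projective $S_m$-schemes with quasi-semi-stable reduction. In the paper {\bf SAL} is invoked on $\hat U'_{S_N}=\X'\sm Z'$ (the complement of the \emph{flat} part $Z'$ of $W'$), yielding $\H_q^{(N)}(\hat U'_{S_N})=0$ for $q\le d+1$; this is what makes the boundary $\delta:\H_a^{(N)}(\hat U'_N)\to\H_{a-1}^{(N)}(\hat U'_{N-1})$ an isomorphism in the localization sequence over $S_N$, and is the correct input for the diagram chase. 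This does not affect the validity of your overall argument, only the bookkeeping of which open set {\bf SAL} eats.
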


\begin{Cor}
\label{fibre-eq}
Let $(F_N, \dots, F_0)$ be a $N$-local field and $\O_N^{(N)}$ its rank-$N$-valuation ring, $\hat S = \Spec(\O_N^{(N)})$. Let $n \in \Z$ not be divisible by the residue characteristic $p = \ch(F_0)$.
Let $\hat \X$ be an integral scheme flat and proper over $\hat S$ with poly-quasi-semi-stable reduction with generic fibre of $\dim \hat \X_N =d$.  
Then for all $a \in \Z$ we have canonical isomorphisms of finite (!) groups
\[ \KH_{a+1}^{(N)}(\hat \X_N,\Z/n) \iso \cdots \iso \KH_{a+1}^{(m)}(\hat \X_m,\Z/n) \iso  \KH_{a}^{(1)}(\hat \X_0,\Z/n) \iso \H_a(\Gamma_{\hat \X_0},\Z/n), \]
where the last group ist the weight homology group of the very special fibre $Y:=\hat \X_{0,\rd}$ over the finite field (cf. \cite{KS10}), 
i.e. the homology groups of the \emph{dual complex} of $Y=\bigcup_i Y_i$ with irreducible components $Y_i$:
 \[ \G_Y:\; \dots \to (\Z/n)^{\pi_0(Y^{[a]})} \to (\Z/n)^{\pi_0(Y^{[a-1]})}  \to \dots,\]
 where the maps are induced by the alternating sums of the maps which leave out one component.\\
The isomorphisms are also true if $\hat \X_N$ is a proper simple normal crossing curve and $\hat \X_m$ are the reductions of inductively taken simple normal crossing models of \ref{sncmr-curves}.
\begin{proof}
Let $\O_m$ be the complete discrete valuation ring of $F_m$ and $S=\Spec(\O_m)$.
Then $\X:= \hat \X \x_{\hat S} S$ by the local discription \ref{pqss-red} is the union of its irreducible components $\X=\bigcup_{i \in I} \X_i$ such that for all $r \ge 0$ the schemes $\X^{[r]}_\rd$ are regular of some dimension and proper over $S$. So by {\bf DSS} and {\bf KC} \ref{etale-hasse} we have the vanishing of $\KH_{q}^{(m)}(\X)$ for all $q \in \Z$. 
Then by the exact localization sequence we get:
\[ 0=\KH_{a+1}^{(m)}( \X) \to \KH_{a+1}^{(m)}(\hat \X_{m}) \to \KH_{a}^{(m)}(\hat \X_{m-1}) \to \KH_{a}^{(m)}( \X)=0. \]
And with {\bf CMP} we have 
\[\KH_{a}^{(m)}(\hat \X_{m-1}) \iso \KH_{a+1}^{(m-1)}(\hat \X_{m-1})\] and we can go down to finite fields. The comparison with the weight homology groups is done in \cite{KS10} and they are finite. 
\end{proof}
\end{Cor}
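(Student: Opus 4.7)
The plan is to peel off one valuation-level at a time, going from $m=N$ down to $m=1$ via the localization exact sequence for Kato homology together with the comparison axiom \textbf{CMP}. The essential input at each level is the cohomological Hasse principle \ref{etale-hasse} just proved, applied to every stratum of the poly-quasi-semi-stable model $\hat\X$ after base change to the complete discrete valuation ring $\O_m$ of $F_m$.

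Fix $1 \le m \le N$, set $S := \Spec(\O_m)$ and $\X := \hat\X \times_{\hat S} S$. By iterated application of \ref{pqss-red-rem}(5) the scheme $\X = \bigcup_{i \in I} \X_i$ decomposes into irreducible components so that every multi-intersection $\X^{[r]}_\rd$ is regular, flat and proper over $S$ with poly-quasi-semi-stable reduction (of some dimension). Theorem \ref{etale-hasse} then gives $\KH^{(m)}_q(\X^{[r]}_\rd) = 0$ for all $q \in \Z$ and $r \ge 0$. Feeding this vanishing into the Kato descent spectral sequence ${}'E^1_{a,b} = \KH^{(m)}_b(\X^{[a]}) \Rightarrow \KH^{(m)}_{a+b}(\X)$ from \textbf{DSS} kills the $E^1$-page, whence $\KH^{(m)}_q(\X)=0$ for all $q$.

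Now decompose $\X = \hat\X_m \sqcup \hat\X_{m-1}$ into its open generic fibre and its closed very special fibre over $\Spec(\O_m)$. The localization sequence
\begin{equation*}
\KH^{(m)}_{a+1}(\X) \to \KH^{(m)}_{a+1}(\hat\X_m) \to \KH^{(m)}_{a}(\hat\X_{m-1}) \to \KH^{(m)}_{a}(\X)
\end{equation*}
has vanishing outer terms by the previous step, yielding $\KH^{(m)}_{a+1}(\hat\X_m) \iso \KH^{(m)}_{a}(\hat\X_{m-1})$. Since $\hat\X_{m-1}$ sits over $s_m = \eta_{m-1}$, axiom \textbf{CMP} furnishes, for $m \ge 2$, the further identification $\KH^{(m)}_{a}(\hat\X_{m-1}) \iso \KH^{(m-1)}_{a+1}(\hat\X_{m-1})$. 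Iterating from $m=N$ down to $m=2$ and using the localization step once more at $m=1$ (no CMP needed there) produces the claimed chain terminating in $\KH^{(1)}_a(\hat\X_0)$. The final identification with the weight homology $\H_a(\G_{\hat\X_0},\Z/n)$ is exactly the second half of Theorem \ref{hasse-finite}(1) applied to the proper strict normal crossing variety $Y = \hat\X_{0,\rd}$ over the finite field $F_0$; finiteness is then manifest because $\G_Y$ is a finite simplicial complex with $\Z/n$-coefficients.

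In the curve case, where $\hat\X_N$ is only a proper simple normal crossing curve, Theorem \ref{sncmr-curves} supplies at each level a simple normal crossing model whose strata are regular and proper, so the same chain of arguments -- vanishing on components via \ref{etale-hasse}, assembly via \textbf{DSS}, peeling via localization and \textbf{CMP} -- applies verbatim. The only step requiring care is the verification that the $\X^{[r]}_\rd$ satisfy the hypotheses of \ref{etale-hasse} at each intermediate level; this is precisely what \ref{pqss-red-rem}(5) provides in the higher-dimensional case and what the inductive use of \ref{sncmr-curves} provides in the curve case, so there is no further geometric obstacle.
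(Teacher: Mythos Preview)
Your proof is correct and follows essentially the same route as the paper: vanishing of $\KH^{(m)}_q(\X)$ via \textbf{DSS} applied to the regular strata $\X^{[r]}_\rd$ together with the Hasse principle \ref{etale-hasse}, then the localization sequence, then \textbf{CMP} to descend one level, iterated down to the finite field and finished off by the Kerz--Saito comparison with weight homology. Your version is slightly more explicit in two places---you pin down \ref{pqss-red-rem}(5) as the geometric input guaranteeing regularity of the $\X^{[r]}_\rd$ at each intermediate level, and you spell out how \ref{sncmr-curves} substitutes for it in the curve case---but these are elaborations rather than differences in strategy.
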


\begin{Cor}
\label{finite-KH}
Let $F$ be an $N$-local field with final residue characteristic $p=\ch(F_0)$ and complete discrete valuation ring $\O_N$. Let $\X$ be a scheme separated and of finite type over $\O_N$ and $n \in \Z$ with $p \nmid n$.
Then the groups $\KH_a^{(N)}(\X,\Z/n)$ are finite for all $a \in \Z$.
\begin{proof}
By the localization sequence and a stratification by regular varieties one can reduce to the case where $\X$ is a regular and affine scheme over $F$ or its residue field. Taking a compactification of $X$ over the rank-$N$-valuation ring $\O$ of $F$ and using the alteration theorem \ref{alteration} one by {\bf PB} is reduced to the case where $\X = (\hat \X \sm \hat Z)_N$ and $\hat \X$ is of poly-quasi-semi-stable reduction with simple normal crossing divisor $\hat Z$ which is flat over $\O$.
From the localization sequence and \ref{fibre-eq} follows the claim.
\end{proof}
\end{Cor}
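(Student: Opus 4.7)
The plan is to reduce the general statement to the situation of Corollary~\ref{fibre-eq}, where $\KH_a^{(N)}(\X,\Z/n)$ is canonically identified with a weight homology group of the dual complex of a simple normal crossing scheme over the finite residue field $F_0$, and is therefore manifestly finite. Since Kato homology is a homology theory on $\Cat_N$, it enjoys a long exact localization sequence for any closed immersion. By stratifying $\X$ into a finite union of locally closed regular subschemes (via generic smoothness on each irreducible component and noetherian induction on dimension), finiteness for $\X$ follows from finiteness for regular affine $\X$ dominating either the generic point $\eta_N$ or the residue point of $S=\Spec(\O_N)$.

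Having reduced to regular affine $\X$, I would take a projective compactification of the closure of $\X$ over the rank-$N$-valuation ring $\hat S=\Spec(\O)$ of $F$, call it $\hat \X$, packaging the complement into a closed subset $\hat Z$. The $p$-alteration theorem~\ref{alteration} supplies a $p$-alteration $b\colon \hat \X' \to \hat \X$ such that $\hat \X'$ has poly-quasi-semi-stable reduction over a semi-local integral extension of $\O$, together with a simple normal crossing divisor $\hat Z' \supseteq b^{-1}(\hat Z)$ which also contains the preimage of the codimension-one closed subset of $\hat S$. Since $\deg(b)$ is a power of $p$ and $\KH_a^{(N)}$ is $n$-torsion with $p\nmid n$, the axiom \textbf{PB} forces $b_*\circ b^*$ to be an isomorphism, hence $b^*$ is injective, and finiteness reduces to the case $\X=(\hat \X' \sm \hat Z')_N$. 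From there, the localization sequence with respect to the flat part of $\hat Z'$ inside $\hat \X'$, together with Corollary~\ref{fibre-eq} applied to $\hat \X'$ and to each irreducible component of the multiple intersections of the flat part (which inherit poly-quasi-semi-stable reduction by Remark~\ref{pqss-red-rem}(4)--(5)), delivers finiteness of $\KH_a^{(N)}(\X,\Z/n)$, as weight homology groups of finite dual complexes are finite.

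The main obstacle is ensuring that the reductions by stratification and alteration actually land one in the hypotheses of Corollary~\ref{fibre-eq}. In particular, one must carefully verify that the divisor $\hat Z'$ built in~\ref{alteration} contains the preimage of the codimension-one closed subset of $\hat S$ with its flat components put in simple normal crossing position (so that Remark~\ref{pqss-red-rem}(4) applies to intersections), and that the pullback and pushforward maps from \textbf{PB} are available for the $p$-alteration $b$ even though $b$ need not be finitely presented when the base extension $\O\to\O'$ is not finite. Once these points are checked, the combinatorial finiteness of the weight homology of the dual complexes of the very special fibres over the finite field $F_0$ concludes the argument.
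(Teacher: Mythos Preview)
Your proposal is correct and follows essentially the same route as the paper: stratify to regular affine pieces, compactify over the rank-$N$ valuation ring, apply the $p$-alteration theorem~\ref{alteration} together with \textbf{PB} to pass to the poly-quasi-semi-stable situation, and then invoke the localization sequence and Corollary~\ref{fibre-eq}. The obstacles you flag are real but are handled by the paper's apparatus: the availability of \textbf{PB} for the alteration over $S_N$ is secured by Remark~\ref{alt-rem}(4), which shows that after taking reduced structures the relevant map $b_{N,\rd}$ is projective (so lies in $\Cat_N$ and is embeddable between regular schemes), and the simple-normal-crossing structure of $\hat Z'$ with its flat part is exactly what Remark~\ref{pqss-red-rem}(4)--(5) guarantees.
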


\begin{Rem}
\begin{enumerate}
\item 
For an $N$-local field $(F_N,\dots,F_0)$ let $(H_a^{(m)})_{a \in \Z}$ be  homology theories on $\Cat_m$ of $\ell$-primary torsion groups with $\ell \neq p =\ch(F_0)$, for $m=1,\dots,N$, satisfying {\bf CMP}, {\bf LVL}, {\bf DSS}, {\bf PB}, {\bf LC}, {\bf SAL}. Then the proof of \ref{etale-hasse} and \cite{KS10} Cor. 6.3 imply the vanishing of the corresponding Kato homology groups also for these homology theories. 
\item The results of this section also hold if one replaces the complete discrete valuation rings with excellent henselian discrete valuation rings where the final one has a finite residue field.
\end{enumerate}
\end{Rem}

\section{Motivic cohomology and cycle maps}

\begin{Rem}
\label{motivic-rem}
If $\X$ is a separated scheme of finite type over the spectrum of a field or a Dedekind ring $S$ let the motivic cohomology groups be defined by
\[ \H^r_M(\X,\Z/n(q)) := \CH^q(\X,2q-r;\Z/n),\]
Bloch's higher Chow groups with finite coefficients (cf. \cite{Blo86}, \cite{Lev04}, \cite{Gei04}, also see \cite{Voe02} for the field's case). 
If $\X$ is regular and $n$ is invertible on $\X$ 
 then there is an étale cycle map
\[ \rho_{\X,n}^{r,q}:\; \H^r_M(\X,\Z/n(q)) \to \H_\et^{r}(\X,\Z/n(q)) \]
constructed in \cite{Blo86}, \cite{GL01}, \cite{Lev04}, \cite{Gei04}.
By \cite{GL01},  \cite{SV00} and the now proven Bloch-Kato-conjecture (cf. \cite{Voe10b}, \cite{Voe10a}, \cite{SJ06}, \cite{HW09}) it is an isomorphism for $0 \le r \le q$ and injective for $r=q+1$. It is also an isomorphism for $q \ge d+c$, 
if $d:=\dim_S\X$ and $\cd(\k(s)) \le c$ for the closed points $s \in S$ (see below).\\
For any $k \in \Z$ and if $\X$, moreover, is smoothly embeddable over $S$ (e.g. quasi-projective),
 there is a commutative diagram of spectral sequences (\cite{JS08}, \cite{Lev01}, \cite{Blo94}):
\[\xymatrix{   
{^ME_{a,b}^1} \ar@{=}[r] \ar^-{\rho_{a,b}}[d]&
 \ds\bigoplus_{x \in \X_{(a)}} \H_M^{a-b}(\k(x),\Z/n(k+a)) \ar@{=>}[r] \ar[d]&
 \ar[d]\H_M^{2d-a-b}(\X,\Z/n(k+d)) \\
{^\et E_{a,b}^1} \ar@{=}[r] & 
\ds\bigoplus_{x \in \X_{(a)}} \H_\et^{a-b}(\k(x),\Z/n(k+a)) \ar@{=>}[r]& \H_\et^{2d-a-b}(\X,\Z/n(k+d)).
}\]
$\rho_{a,b}$ is an isomorphism for $b \ge -k$ by the Bloch-Kato conjecture,\\ 
${^ME_{a,b}^1}$ is supported in the area $0 \le a \le d$ and $ b \ge -k$, \\ 
${^\et E_{a,b}^1}$ is supported in the area $0 \le a \le d$ and 
$-c \le b \le a$. \\
So for $k\ge c$ we have isomorphisms everywhere. 
For the special choice $k=c-1$ the spectral sequences only differ by the line $b=-c$ and we have a long exact sequence
(cf. \cite{JS08} or \cite{UzuD}):
\[\begin{array}{cccccccc}
 \cdots \to {^\et E_{a+1,-c}^2}& \to& \H_M^{2d+c-a}(\X,\Z/n(c+d-1)) \\
 &\to& \H_\et^{2d+c-a}(\X,\Z/n(c+d-1)) 
 &\to& {^\et E_{a,-c}^2} \to \cdots.   
\end{array}\]
and we can put $\KH_\et^{2d+c-a}(\X,\Z/n(c+d-1)):={^\et E_{a,-c}^2}$. 
\end{Rem}

\begin{Thm}
Let $F$ be a $N$-local field with final residue characteristic $p=\ch(F_0)$
and complete discrete valuation ring $\O_N$, $S=\Spec(\O_N)$,
 and let $\X$ be a scheme separated and of finite type over $S$ of pure dimension $d=\dim_S(\X)$ and $n \in \Z$ with $p \nmid n$. 
Then we have:
\begin{enumerate}
\item The motivic cohomology groups
\[ \H^r_M(\X,\Z/n(q)) := \CH^q(\X,2q-r;\Z/n),\]
are finite for $0 \le r \le q+1$ or $q \ge N+d-1$. 
\item If $\X$ is regular and proper and flat over $S$, then the étale cycle map
\[ \rho_{\X,n}^{r,q}:\; \H^r_M(\X,\Z/n(q)) \to \H_\et^{r}(\X,\Z/n(q)) \]
is an isomorphism of finite groups in case $0 \le r \le q$ or $q \ge N+d-1$, and is injective for $r=q+1$.
\end{enumerate}
\begin{proof}
1.) By stratifications, excision sequences and $\ell'$-alterations one can assume $X$ to be smooth over $F$. Then by \ref{motivic-rem} one is left to prove the finiteness of the étale cohomology groups and the ${^\et E_{a,-N}^2}$ terms, which by \cite{JSS09} can be identified with the Kato-homology groups $\KH_a^{(N)}$. The latter one was shown in \ref{finite-KH}. The former one follows from \cite{SGA4.5} Th. finitude, Hochschild-Serre spectral sequence and \ref{coh-hdf}.\\
2.) By \ref{motivic-rem} we only have to consider the case $q=N+d-1$, where we have the long exact sequence. But in this case the Kato conjecture {\bf KC} gives the vanishing of $\KH_a^{(N)}(\X)={^\et E_{a,-N}^2}$ for all $a \in \Z$.
\end{proof}
\end{Thm}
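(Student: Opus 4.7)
The plan is to leverage the machinery set up in Remark~\ref{motivic-rem}, which compares the motivic and étale niveau spectral sequences on $\X$ and identifies the difference between them as the Kato-homology line $\KH_a^{(N)}$. Since both the vanishing (Theorem~\ref{etale-hasse}) and the finiteness (Corollary~\ref{finite-KH}) of Kato-homology over $\O_N$ are already in hand, the proof reduces essentially to a diagram chase in these spectral sequences together with standard devissages.

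For part~(1), I would first use a stratification of $\X$ into regular locally closed subschemes and apply the long exact localization sequences for Bloch's higher Chow groups in order to reduce to the case where $\X$ is a smooth quasi-projective scheme over $F=F_N$ or over the residue field $F_{N-1}$; the latter case can be absorbed by an induction on $N$ into the analogous statement over complete discrete valuation rings of $(N-1)$-local fields. In the essential case of $\X$ smooth over $F$, Remark~\ref{motivic-rem} gives $\rho^{r,q}_{\X,n}$ as an isomorphism onto étale cohomology for $0 \le r \le q$ by Bloch-Kato; finiteness of $\H^r_\et(\X,\Z/n(q))$ follows from the Théorème de finitude of SGA~$4\tfrac{1}{2}$ combined with a Hochschild-Serre devissage along the tower $F_N, F_{N-1}, \dots, F_0$ and the finiteness of the cohomological dimension of each step. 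For the remaining index $r=q+1$, the obstruction line in the comparison of spectral sequences is exactly ${^\et E^2_{a,-N}}(\X)$, which by~\cite{JSS09} is identified with $\KH_a^{(N)}(\X,\Z/n)$, whose finiteness is Corollary~\ref{finite-KH}. The range $q \ge N+d-1$ is handled simultaneously: the support strips of the two spectral sequences then agree outside the Kato-line, and finiteness on that line is again Corollary~\ref{finite-KH}.

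For part~(2), I fix $\X$ regular, proper, and flat over $S$, so that the hypotheses of Theorem~\ref{etale-hasse} are met. The isomorphism claim for $0 \le r \le q$ and the injectivity for $r=q+1$ are direct consequences of Remark~\ref{motivic-rem} together with part~(1), which supplies finiteness of both sides. The remaining assertion is the isomorphism for $q \ge N+d-1$, which by the shape of the two spectral sequences collapses to the single boundary value $q=N+d-1$. For this value Remark~\ref{motivic-rem} provides a long exact sequence whose extra terms are ${^\et E^2_{a,-N}}(\X) = \KH_a^{(N)}(\X,\Z/n)$; applying Theorem~\ref{etale-hasse} gives the vanishing of these groups for all $a \in \Z$, and the long exact sequence immediately yields the desired isomorphism. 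Finiteness is inherited from part~(1).

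The main obstacle is not the final diagram chase but the preparatory reductions in part~(1). One must carefully track motivic cohomology through stratifications and alterations, and — crucially — obtain finiteness of étale cohomology over the non-classical base $\Spec(\O_N)$, for which a standard SGA~$4\tfrac{1}{2}$ input is not directly available and must be pieced together through a Hochschild-Serre devissage step-by-step down the residue tower of the higher local field. Once this bookkeeping is in place, the deep geometric input from Theorem~\ref{etale-hasse} plugs in at a single spot and yields both parts of the statement simultaneously.
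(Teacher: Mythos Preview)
Your proposal is correct and follows essentially the same route as the paper's own proof: reduce via stratification and excision to the smooth case, then use the comparison of niveau spectral sequences from Remark~\ref{motivic-rem} together with the identification ${^\et E^2_{a,-N}}=\KH_a^{(N)}$, invoking Corollary~\ref{finite-KH} for finiteness in part~(1) and Theorem~\ref{etale-hasse} for vanishing in part~(2). The only point the paper makes explicit that you gloss over is the use of $\ell'$-alterations in the reduction step (needed to pass from regular to smooth when $F$ may have positive characteristic), and the paper cites Lemma~\ref{coh-hdf} for the finiteness input in the Hochschild--Serre devissage rather than just cohomological dimension.
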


\begin{Cor}
Let $F$ be an $N$-local field with final residue characteristic $p=\ch(F_0)$ and $\hat \X$ a projective and flat scheme over the rank-$N$-valuation ring $\O$ of $F$ with poly-quasi-semi-stable reduction and simple normal crossing divisor $\hat Z$.
Then there is a simple normal crossing divisor $\hat Z'$ containing $\hat Z$ such that for the $F$-variety 
$U=(\hat \X \sm \hat Z')_N$ with $d:=\dim U$ the motivic cohomology groups
\[ \H_M^{2d+2+N-a}(U,\Z/n(d+N)) =0\qquad \forall a \le d+1.\]
and the edge morphisms
\[\epsilon: H^{(N)}_a(U,\Z/n) \to \KH^{(N)}_a(U,\Z/n) \]
are isomorphisms for $a \le d+1$ (and trivial surjective for $a \ge d+2$) and all integers $n \in \Z$ with $p \nmid n$. 
Furthermore, it can be achieved that both groups vanish for $a \le d$.
\begin{proof}
For $\epsilon$ to be an isomorphism use Bertini \ref{bertini} and proceed as in the proof of \ref{etale-hasse}. For the vanishing of the groups we are then left to show that the weight homology groups 
$\H_a(\Gamma_{X\sm(W \cup D)})$ vanish for $a < \dim X$ and a smooth connected variety $X$ with a sncd $W \cup D$, where $D=X \cap H$ with the hyperplane section $H$ meeting every irreducible component $V$ of $X$ and $W^{[r]}$, $r \ge 0$, transversally (and connected if $\dim V \ge 2$).
Induction on the number of components of $W$ and dimension of $X$ (see proof of {\bf SAL}) we are left with $X$ an irreducible curve and $D$ a finite number of points. So we have a natural surjection $\pi_0(D) \srj \pi_0(X)$ and the exact sequence:
\[ \H_0(\Gamma_D) \srj \H_0(\Gamma_X) \to \H_0(\Gamma_{X\sm D}) \to \H_{-1}(\Gamma_D)=0, \]
and so $\H_0(\Gamma_{X\sm D})$ vanishes (but $\H_1$ i.g. would not).\\
The vanishing of the motivic cohomology group follows from the long exact sequence in \ref{motivic-rem} and the identification
${^\et E_{a,-c}^2}=\KH_a^{(N)}(U)$ (cf. \cite{JSS09}).
\end{proof}
\end{Cor}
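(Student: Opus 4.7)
The approach closely mirrors the proof of Theorem \ref{etale-hasse}, using Bertini \ref{bertini} to enlarge $\hat Z$ to a suitable sncd $\hat Z'$. I would iteratively apply \ref{bertini} (multiply re-embedding $\hat\X$ if necessary in the finite-residue-field case) to add hyperplane sections so that (i) some irreducible component $H\subset \hat Z'$ has affine complement in $\hat\X$, and (ii) every multiple intersection of the components of $\hat Z'$ is cut transversally by the later sections, mirroring the transversality setup in the proof of \ref{etale-hasse}. After base change to $S_N$, the scheme $\X':=(\hat\X\times_{\hat S}S_N)_\rd$ is regular and projective over $S_N$ with quasi-semi-stable reduction, the divisor $W':=(\hat Z'\times_{\hat S}S_N)_\rd$ is the union of $\X'_{s_N,\rd}$ with an $S_N$-flat sncd, and $U=\X'\sm W'$ is affine.

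For the étale homology, SAL applied to this setup yields $\H_a^{(N)}(U)=0$ for all $a\le \dim_{S_N}\X'=d+1$. For the Kato homology, I would use the localization sequence for $W'\inj \X'$ together with $\KH_\bullet^{(N)}(\X')=0$ (which is \ref{etale-hasse}) to reduce to showing $\KH_{a-1}^{(N)}(W')=0$ for $a\le d+1$. The sncd $W'$ is then handled by DSS applied to its components: each multiple intersection inherits poly-quasi-semi-stable reduction over a lower-rank valuation ring by \ref{pqss-red-rem}, so iterated CMP together with \ref{fibre-eq} identifies the Kato homology of each stratum with the weight homology of the very special fibre over $F_0$, and the vanishing follows from the curve-with-marked-points induction indicated in the author's proof sketch (the key input being $\H_0(\G_{X\sm D})=0$ once the hyperplane section meets every irreducible component transversally). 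Combining, both $\H_a^{(N)}(U)$ and $\KH_a^{(N)}(U)$ vanish for $a\le d+1$, so the edge morphism is vacuously an isomorphism there; for $a\ge d+2$ the first-quadrant support of the niveau spectral sequence already forces $\KH_a^{(N)}(U)=0$, making the edge map trivially surjective onto zero.

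For the motivic cohomology vanishing, I would invoke \ref{motivic-rem} applied to the affine smooth $F$-variety $U$: with $d=\dim U$ and $c=\cd F=N$, the weight $q=d+N$ lies in the isomorphism range $q\ge d+c$, so the étale cycle map identifies $\H_M^r(U,\Z/n(d+N))$ with $\H_\et^r(U,\Z/n(d+N))$ for every $r$. The range $a\le d+1$ forces $r=2d+2+N-a\ge d+N+1$, which is strictly above the Artin--Gabber affine cohomological dimension bound $\cd U\le d+N$, so the étale and hence the motivic cohomology groups vanish.

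The principal technical obstacle is the Kato-homology vanishing in the second step: obtaining $\KH_a^{(N)}(U)=0$ sharply up to $a\le d+1$ (rather than just $a\le d$) requires carrying out the iterated descent through CMP cleanly, and maintaining the transversality of all multiple intersections with the successive hyperplane sections throughout that iteration is exactly what dictates the Bertini enlargement of $\hat Z$ to $\hat Z'$ in the first place. Once those strata are brought down to the level of sncvs over the finite field $F_0$, the weight-homology induction of \cite{KS10} closes the argument.
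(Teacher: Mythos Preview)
Your argument contains two systematic off-by-one errors that together lead you to overclaim vanishing in the top degree $a=d+1$, where the Corollary only asserts that $\epsilon$ is an isomorphism (between possibly nonzero groups); the paper's own proof explicitly notes that $\H_1(\Gamma_{X\sm D})$ ``i.g.\ would not'' vanish, which is precisely the obstruction at $a=d+1$.

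The first error is the application of \textbf{SAL}. That axiom requires the removed divisor $W$ to be $S_N$-flat, so it applies to $\hat U'_{S_N}=\X'\sm Z'$ (flat part only), yielding $\H_a^{(N)}(\X'\sm Z')=0$ for $a\le d+1$. Your $U=(\hat\X\sm\hat Z')_N$ is the \emph{generic fibre} of $\X'\sm Z'$, not $\X'\sm Z'$ itself, and the localization sequence only gives $\H_a^{(N)}(U)\cong\H_{a-1}^{(N)}(\text{special fibre})$, which need not vanish. This is exactly why the proof of \ref{etale-hasse} iterates down through \textbf{CMP} and \textbf{DSS} to the finite-field level and then invokes \textbf{LC} to conclude that $\epsilon$ is an isomorphism---not that both sides are zero.

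The second error is in the motivic step: $\cd F_N=N+1$, not $N$ (already $\cd F_0=1$). Hence the cycle-map isomorphism range from \ref{motivic-rem} is $q\ge d+N+1$, missing $q=d+N$ by one, and affine Lefschetz over $F$ gives only $\cd U\le d+N+1$, so $\H_\et^{d+N+1}(U)$ is not forced to vanish. The paper instead uses the long exact sequence of \ref{motivic-rem} at the critical weight $k=c-1$, where the difference between $\H_M$ and $\H_\et$ is controlled by ${^\et E^2_{a,-c}}=\KH_a^{(N)}(U)$; the already-established isomorphism $\epsilon$ and the weight-homology vanishing for $a\le d$ then close the argument.
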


\section{Higher dimensional class field theory}

Let the notations be like in the last chapter: $F$ be the $N$-local field given by $(F_N, \dots, F_0)$, $p=\ch(F_0)$, and their complete discrete valuation rings $\O_m$, $S_m=\Spec(\O_m)$ and $\Cat_m=\Sch_\sft(S_m)$.

\begin{DefLem} 
Let $X$ be scheme separated and of finite type over $F=F_N$. For every closed point $x \in X$ the residue field $\k(x)$ is a $N$-local field. So by \cite{Kat7982} there is a reciprocity map
\[ \rho^x: \K^M_N(\k(x)) \longrightarrow \Gal^\ab_{\k(x)}=\pi_1^\ab(\Spec(\k(x))), \]
where $\K^M_N(\k(x))$ is the $N$-th Milnor $K$-group.
Let $\pi_1^\ab(X)$ be the abelianized étale fundamental group of $X$ (cf. \cite{SGA1} or \cite{Sza09}), which is covariant functorial. Taking the push-forward from $\pi_1^\ab(\Spec(\k(x)))$ to $\pi_1^\ab(X)$ and adding over all closed points, we get a map
\[ \bigoplus_{x \in X_0} \K^M_N(\k(x)) \longrightarrow \pi_1^\ab(X).  \] 
If $X$ is proper over $F$, then in \cite{Sai85} it was shown that this map factors through 
\[\SK_N(X):=\coker\left( \bigoplus_{y \in X_1} \K^M_{N+1}(\k(y)) \stackrel{\partial}{\longrightarrow}  \bigoplus_{x \in X_0} \K^M_N(\k(x)) \right),  \]
where $\partial$ is the boundary map coming from Milnor-$K$-theory and $X_a$ is the set of $a$-dimensional points of $X$.
The induced map 
\[ \rho^X: \SK_N(X) \longrightarrow \pi_1^\ab(X) \]
is called the \emph{reciprocity map} of $X$. 
\end{DefLem}

The main task of higher dimensional class field theory is to understand this map, especially its kernel and cokernel. The method introduced by U. Jannsen and S. Saito (cf. \cite{JS03}) is to interpret the occuring groups as homology groups.\\

\begin{Lem}[Duality] 
\label{duality-hlf}
Let $F$ be a $N$-local field with final residue characteristic $p=\ch(F_0)$.
Let $f: X \to \Spec(F)$ be a separated morphism of finite type and $n$ an integer not divisible by $p$. 
Then for any integer $i$ there is a perfect pairing of finite groups
\[\H^{i}_{\et}(X,D_X(C)) \x \H^{N+1-i}_{\et,c}(X,C) \longrightarrow \H^{N+1}_\et(X,Rf^!\Z/n(N))
\stackrel{\tr_X}{\longrightarrow}  \Z/n,   \]
for any $C$ in $D^b_c(X,\Z/n)$, the derived category of complexes of étale sheaves of $\Z/n$-modules on $X$ with bounded constructible cohomology sheaves, where $D_X(C)=\RHom(C,Rf^!\Z/n(N))$ and $\tr_X$ is induced by the trace map 
\[Rf_*Rf^!\Z/n(N) \to \Z/n(N). \]
\begin{proof}
This follows from Poincaré duality together with the duality of the ground field \cite{Koy99}. Also cp. with \cite{JS03} Lem. 5.3.
\end{proof}
\end{Lem}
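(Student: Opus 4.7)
The plan is to combine relative Poincar\'e--Verdier duality for the structural morphism $f:X\to\Spec(F)$ with absolute duality for the base $\Spec(F)$ itself (Koya's duality for higher local fields), in the style of \cite{JS03} Lem. 5.3. The overall strategy is: first pass from the pairing on $X$ to a pairing on $\Spec(F)$ using Verdier duality, and then invoke Koya's theorem that the $N$-local field $F$ behaves, with respect to $\Z/n$-coefficients ($p\nmid n$), like a field of strict cohomological dimension $N+1$ with a trace isomorphism $\H^{N+1}(F,\Z/n(N))\bij\Z/n$ and a perfect duality for bounded constructible complexes.

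First I would set up notation and apply Verdier duality for $f$: since $f$ is separated of finite type, one has a natural isomorphism
\[ Rf_{*}\,\RHom(C,Rf^{!}\Z/n(N))\;\iso\;\RHom(Rf_{!}C,\Z/n(N))  \]
in $D^{b}_{c}(\Spec(F),\Z/n)$. Taking hypercohomology on $\Spec(F)$ yields
\[ \H^{i}_{\et}(X,D_{X}(C))\;\iso\;\H^{i}_{\et}(\Spec(F),\RHom(Rf_{!}C,\Z/n(N))), \]
and by definition $\H^{N+1-i}_{\et,c}(X,C)=\H^{N+1-i}_{\et}(\Spec(F),Rf_{!}C)$. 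Next I would invoke Koya's duality theorem: for every bounded constructible complex $K$ of $\Z/n$-modules on $\Spec(F)$ (equivalently, every bounded complex of discrete $\Gal(F^{\sep}|F)$-modules with finite constructible cohomology), there is a perfect pairing of finite groups
\[ \H^{j}(F,K)\x\H^{N+1-j}(F,\RHom(K,\Z/n(N)))\longrightarrow\H^{N+1}(F,\Z/n(N))\stackrel{\tr_{F}}{\longrightarrow}\Z/n. \]
Applying this to $K=Rf_{!}C$ and combining with the previous isomorphism produces exactly the pairing in the statement, with $\tr_{X}$ identified as the composite of the Verdier trace $Rf_{*}Rf^{!}\Z/n(N)\to\Z/n(N)$ with $\tr_{F}$.

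It remains to verify finiteness and non-degeneracy. Non-degeneracy is formal from the two input dualities; the only real content is finiteness of the two groups being paired. Here one uses that $C$ and hence $Rf_{!}C$ (constructibility of $Rf_{!}$, \cite{SGA4.5} Th.~finitude, and its derived variant) are in $D^{b}_{c}$, and that the cohomology of $F$ with values in any bounded constructible complex of $\Z/n$-modules is finite -- this is part of Koya's package and can also be obtained by dévissage along the tower $F=F_{N},F_{N-1},\dots,F_{0}$ using the Hochschild--Serre spectral sequence for each step $F_{m}/F_{m-1}$, reducing to finiteness over the finite residue field $F_{0}$.

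The main obstacle, and the reason a reference to Koya is essential, is the absolute duality over $F$ with twisted constructible coefficients: once this is in place, the rest is a formal unwinding of Verdier duality. A secondary point worth care is the compatibility of the trace map $\tr_{X}$ with the identification $\H^{N+1}_{\et}(X,Rf^{!}\Z/n(N))\to\H^{N+1}(F,\Z/n(N))$, but this is built into the adjunction $(Rf_{!},Rf^{!})$ and so requires no extra argument beyond bookkeeping.
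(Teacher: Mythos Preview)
Your proposal is correct and follows exactly the approach sketched in the paper: reduce via relative Verdier duality for $f$ to a statement over $\Spec(F)$, and then apply Koya's absolute duality for higher local fields \cite{Koy99}, in the spirit of \cite{JS03} Lem.~5.3. You have simply unpacked in detail what the paper's one-line proof leaves implicit, including the finiteness argument via constructibility of $Rf_!$ and d\'evissage along the local tower.
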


\begin{Prp}
\label{facts}
Now assume that $X$ has a 
model $\X$ over $S=S_N$, $Y:=\X_{s,\rd}$.
Let $n$ be an integer not divisible by $p=\ch(F_0)$. Then the $5$-term sequence of the niveau spectral sequence for $\X$, $X$ and $Y$ w.r.t. the étale homology theory $\H=\H_\et^{(N)}$ over $S$ give the following huge (anti-)commutative diagram with exact rows and columns:
\[ \xymatrix{ 
 \KH_3(\X,\Z/n) \ar^-{j^*}[d]\ar[r] & E^2_{1,1}(\X,\Z/n) \ar^-{j^*}[d]\ar[r] & 
 \H_{2}(\X,\Z/n) \ar^-{j^*}[d]\ar[r] & \KH_{2}(\X,\Z/n) \ar^-{j^*}[d], \\
  \KH_3(X,\Z/n) \ar^-{\Delta^2}[d]\ar^-{d^2}[r] & E^2_{1,1}(X,\Z/n) \ar^-{\Delta}[d]\ar^-{\epsilon^X}[r] & \H_{2}(X,\Z/n) \ar^-{\delta}[d]\ar[r] & \KH_2(X,\Z/n) \ar^-{\Delta^1}[d]\ar[r] & 0, \\
\KH_{2}(Y,\Z/n) \ar^-{i_*}[d]\ar^-{d^2}[r] & E^2_{0,1}(Y,\Z/n) \ar^-{i_*}[d]\ar^-{\epsilon^Y}[r] & \H_{1}(Y,\Z/n) \ar^-{i_*}[d]\ar[r] & \KH_{1}(Y,\Z/n) \ar^-{i_*}[d]\ar[r] & 0, \\
\KH_{2}(\X,\Z/n) \ar[r] & E^2_{0,1}(\X,\Z/n) \ar[r] & \H_{1}(\X,\Z/n) \ar[r] & \KH_{1}(\X,\Z/n) \ar[r] & 0. }\]
Now we have the following facts about this diagram:
\begin{enumerate}
\item If $\X$ is proper over $S$ then the duality \ref{duality-hlf} induces the following isomorphisms
 \[\begin{array}{ccccc}
  H_{2}(X,\Z/n) & \iso & \H^1_\et(X,\Z/n)^\vee & = & \pi_1^\ab(X)/n, \\
  H_{1}(Y,\Z/n) & \iso & \H^1_\et(Y,\Z/n)^\vee & = & \pi_1^\ab(Y)/n.
 \end{array} \]
\item The Galois symbol maps $K_i^M(\k)/n \to \H^i(\k,\Z/n(i))$ in degree $i$ are isomorphisms by Bloch-Kato conjecture (cf. \cite{Voe10b}, \cite{Voe10a}, \cite{SJ06}, \cite{HW09}) and commute with the localisation maps (cf. \cite{Kat86} Lem. 1.4). So we get identifications (cf. \cite{JS03} §5):
 \[\begin{array}{ccccc}
  E^2_{1,1}(X,\Z/n) &  = & \SK_N(X)/n, \\
  E^2_{0,1}(Y,\Z/n) &  = & \SK_{N-1}(Y)/n.
 \end{array} \]
\item The map $\epsilon^X$ and $\epsilon^Y$ coincide with the corresponding reciprocity maps modulo $n$ up to sign by \cite{JS03} and \cite{JSS09}.
\item If $\X=\bigcup_i \X_i$ with closed subschemes $\X_i$ such that all $\X^{[r]}$, $r \ge 0$, are regular, proper and flat over $S$ then the Kato conjecture {\bf KC} implies that
 \[ \KH_a(\X,\Z/n)=0 \quad \text{and}\quad \KH_{a+1}(X,\Z/n)\stackrel{\Delta^a}{\iso}\KH_{a}(Y,\Z/n) \quad \forall a \in \Z.\]
\item If $X$ is the generic fibre of a scheme $\hat \X$ which is proper and flat with poly-quasi-semi-stable reduction over the rank-$N$-valuation ring of $F$ (cf. \ref{pqss-red}), or if $X$ is a proper simple normal crossing curve and we inductively take the simple normal crossing models of its reductions \ref{sncmr-curves}, let $\hat \X_0$ be the fibre over the finite field $F_0$. 
Then  by \ref{fibre-eq} we have 
\[ \KH_{a+1}({\hat \X_0},\Z/n)\iso \H_a(\Gamma_{\hat \X_0},\Z/n) \]
and we get the following exact sequence (for $p \nmid n$):
\[ \H_2(\Gamma_{\hat \X_0},\Z/n) \to \SK_N(X)/n \stackrel{\rho^X_n}{\to} \pi_1^{\ab}(X)/n \to \H_1(\Gamma_{\hat \X_0},\Z/n) \to 0. \]
In the case of a curve we, moreover, have $\H_2(\Gamma_{\hat \X_0},\Z/n)=0$.
\end{enumerate}
\end{Prp}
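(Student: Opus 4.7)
The plan is to verify each of the five assertions in turn, combining the duality statement \ref{duality-hlf}, the Bloch--Kato conjecture, the vanishing Theorem \ref{etale-hasse}, and the fibrewise comparison \ref{fibre-eq} with formal manipulations of the niveau spectral sequence {\bf NSS} and its descent variant {\bf DSS}.

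For (1), I would apply \ref{duality-hlf} with $C = \Z/n$ and $i = 1$. Since $D_X(\Z/n) = Rf^!\Z/n(N)$, the pairing becomes
\[ \H^N_\et(X, Rf^!\Z/n(N)) \x \H^1_\et(X,\Z/n) \longrightarrow \Z/n, \]
whose left factor is, by the very definition \ref{etale-hom-def} of the \'etale homology theory, exactly $\H_2^{(N)}(X,\Z/n)$, while the Pontryagin dual of the finite group $\H^1_\et(X,\Z/n) = \Hom(\pi_1^\ab(X),\Z/n)$ is $\pi_1^\ab(X)/n$. The analogous argument applied to $Y$ viewed over $\Spec(F_{N-1})$, combined with {\bf CMP} to pass between the theories $\H^{(N)}$ and $\H^{(N-1)}$, yields the second isomorphism.

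For (2) and (3), I would unpack the $E^1$-terms of the niveau spectral sequence of $X$ and $Y$. Since $X$ lives over $\eta_N$, a direct computation with the definition of $\dim_{S_N}$ shows that closed points of $X$ lie in $X_{(1)}$ with $\k(x)$ an $N$-local field, whence
\[ E^{(N),1}_{1,1}(X,\Z/n) = \bigoplus_{x \in X_{(1)}} \H^N(\k(x),\Z/n(N)) \iso \bigoplus_{x \in X_0} K^M_N(\k(x))/n \]
via Bloch--Kato; an analogous identification holds in dimension one higher. The compatibility of the $d^1$-differential with the Milnor $K$-theoretic tame symbol (\cite{Kat86} Lem. 1.4) then rewrites the cokernel as $\SK_N(X)/n$. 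The same recipe applied to $Y_{(0)}$ (whose residue fields are $(N-1)$-local) gives $\SK_{N-1}(Y)/n$. For (3), the identification of the edge map $\epsilon^X : E^2_{1,1}(X) \to \H_2(X)$ with the reciprocity map modulo $n$, up to sign, is exactly the comparison carried out in \cite{JS03} and \cite{JSS09}; the same for $\epsilon^Y$.

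For (4), Theorem \ref{etale-hasse} applied to each regular, proper and flat $\X^{[r]}/S$ gives $\KH_a^{(N)}(\X^{[r]}) = 0$ for all $a, r$; the descent spectral sequence {\bf DSS} then forces $\KH_a^{(N)}(\X) = 0$, and the localization sequence $\KH_{a+1}(\X) \to \KH_{a+1}(X) \to \KH_a(Y) \to \KH_a(\X)$ makes $\Delta^a$ an isomorphism. Finally, (5) is then immediate: by \ref{fibre-eq} one has $\KH_{a+1}^{(N)}(X,\Z/n) \iso \H_a(\Gamma_{\hat\X_0},\Z/n)$, and the claimed four-term sequence is the $5$-term exact sequence of {\bf NSS} for $X$ in total degree $2$,
\[ \KH_3(X,\Z/n) \to E^2_{1,1}(X,\Z/n) \to \H_2(X,\Z/n) \to \KH_2(X,\Z/n) \to 0, \]
rewritten with the translations (1)--(3) (using that $E^1_{0,q}(X) = 0$ since $X$ has no $\dim_{S_N}=0$ points, so $E^\infty_{2,0}(X) = E^2_{2,0}(X)$). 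For a curve $X$, the very special fibre $\hat \X_0$ is of dimension $\le 1$, so its dual complex is concentrated in degrees $0$ and $1$, forcing $\H_2(\Gamma_{\hat\X_0},\Z/n) = 0$. The only genuinely subtle point I foresee is the sign compatibility in (3), which reduces to the meticulous trace- and cycle-class calculations of \cite{JSS09}; everything else is formal once the pieces are assembled.
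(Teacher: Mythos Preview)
Your proposal is correct and follows essentially the same approach as the paper, which in fact gives no separate proof but only inline justifications identical in spirit to yours. One small slip: in part (1) you write ``$i=1$'' but then correctly compute with $i=N$ (so that $N+1-i=1$ picks out $\H^1_{\et,c}(X,\Z/n)=\H^1_\et(X,\Z/n)$ for proper $X$); the formula you display is the right one.
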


For further analysis we need the following notations and lemmata:

\begin{Not}
\begin{itemize}
\item 
For an abelian group $A$ and a set of primes $\Ll$ let $\N(\Ll)$ be the monoid of all natural numbers which have prime divisiors only in $\Ll$. $\N(\Ll)$ is ordered by divisibility relations.
 We define $A_\Ll$ to be the $\Ll$-completion
\[ A_\Ll :=  \prlim_{n \in \N(\Ll)} A/n. \]
\item We put
\[A_{\Lb\di}:= \bigcap_{n \in \N(\Ll)} n A = \ker \bigl( A \to \prlim_{n \in \N(\Ll)} A/n  =A_\Ll \bigr) \] 
to be the subgroup of all $\Ll$-divisible elements of $A$, and 
$A_{\Lb\Di}$ to be the maximal $\Ll$-divisible subgroup of $A$. We mention that we always have
\[ A_{\Lb\Di} \ins A_{\Lb\di},\]
but in general no equality.
\end{itemize}
\end{Not}

\begin{Not}
\label{nota-mod-div}
Let $F=F_N$ be a $N$-local field with final residue characteristic $p=\ch(F_0)$. Let $\Ll$ be the set of all primes not containing $p$. And let $f: X \to \Spec(F)$ be a connected and proper variety over $F$. We then put:
\begin{itemize}
\item $\K'_a(F):= \K^M_a(F)/\K^M_a(F)_{\Lb\Di}$,
\item $\SK'_N(X):= \SK_N(X)/\SK_N(X)_{\Lb\Di}$,
\item ${'\rho^F}:  \K'_N(F) \to \Gal^\ab_F(p')$ induced by the reciprocity map $\rho^F$,
\item ${'\rho^X}:  \SK'_N(X) \to \pi_1^\ab(X)(p')$ induced by the reciprocity map $\rho^X$,
\item $V_N'(X):=\ker\left(N_{X|F}:\SK'_N(X) \stackrel{f_*}{\to} \K'_N(F)  \right)$ induced by the norm map, 
\item $\pi_1^\geo(X):=\ker \left( \pi_1^\ab(X) \stackrel{f_*}{\to} \Gal_{F}^\ab\right)$, 
\item ${'\rho^X_|}: V_N'(X) \to \pi_1^\geo(X)(p')$ induced by the reciprocity map ${'\rho^X}$.
\end{itemize}
\end{Not}

\begin{Rem}
\label{norm-rec-mod-div}
Note, that the maps in \ref{nota-mod-div} are well-defined, since pro-finite groups have no non-trivial divisible subgroups and images of divisible groups are divisible.\\
We also mention that $\ker'\rho^F=0$, since $\ker \rho^F=\K^M_N(F)_\Di$ by Fesenko, cf. \cite{Fes96} Thm. 2 (ii) and \cite{Ras95} Thm. 2.18. And the cokernel of the norm map $\coker N_{X|F}$ is finite by the reciprocity isomorphisms for higher local fields by Kato, cf. \cite{Ras95} Thm. 2.6. We then have the following commutative diagram
\[\xymatrix{ 
0\ar[r]& \ker'\rho^X_|\ar@{=}[d] \ar[r]& V_N'(X) \ar@{^(->}[d]\ar^-{'\rho^X_|}[r]&  \pi_1^\geo(X)(p')\ar[r]\ar@{^(->}[d] & \coker'\rho^X_| \ar@{^(->}[d] \ar[r]& 0 \\
0\ar[r]& \ker'\rho^X \ar[d]\ar[r]& \SK_N'(X)\ar^-{'\rho^X}[r]\ar^-{N_{X|F}}[d]&  \pi_1^\ab(X)(p') \ar[d]\ar[r] & \coker'\rho^X \ar[d]\ar[r]& 0 \\
 &0 \ar[r]& \K_N'(F)\ar@{^(->}^-{'\rho^F}[r]\ar@{->>}[d]&  \Gal_F^\ab(p')\ar[r] & \coker'\rho^F \ar[r]& 0\\
 &&\coker N_{X|F}.
}\]
\end{Rem}

\begin{Lem}
\label{k-grp-hdf}
Let $F=F_N$ be a $N$-local field with final residue characteristic $p=\ch(F_0)$, $q=\#F_0$. Then for any integer $a$ the Milnor-$K$-groups modulo maximal $p'$-divisible subgroups satisfy
\[ \K'_a(F_N)\iso \K'_a(F_{N-1}) \oplus \K'_{a-1}(F_{N-1}),  \]
leading to
\[ \K'_a(F_N)\iso \Z^{\binom{N}{a}} \oplus \left(\Z/(q-1)\right)^{\binom{N}{a-1}}.  \]
\begin{proof}
By \cite{BT73} Ch. I §4 or \cite{Ras95} §1.2 (4) we have an exact sequence:
 \[0\longrightarrow U^1_a(F_N) \longrightarrow  \K^M_a(F_N) \longrightarrow  \K^M_a(F_{N-1})\oplus \K^M_{a-1}(F_{N-1}) \longrightarrow 0,\]
where $U^1_a(F_N)$ is generated by symbols $\{b_1,\dots,b_a\}$ with $b_1 \in 1+\fm_N$ and the right map depends on a choice of a prime element $\pi$ and is given by sending $\{b_1,\dots,b_a\}+\{b_1',\dots,b_{a-1}',\pi\}$ to $\left(\{\ol b_1,\dots,\ol b_a\},\{\ol b_1',\dots,\ol b_{a-1}'\}\right)$, with $b_i,b_i' \in \O_N^\x$. Since we have
\[ 1+\fm_N \iso  \prlim_{i \ge 1}\left( (1+\fm_N)/(1+\fm^{1+i}_N) \right) \quad\text{ and }\quad (1+\fm_N^i)/(1+\fm^{i+1}_N) \iso F_{N-1}, \] 
we get that $1+\fm_N$ and hence $U^1_a(F_N)$ is $\ell$-divisible for $\ell\neq \ch(F_{N-1})$, thus the desired recursion formula, which leads to 
\[ \K'_a(F_N) \iso \bigoplus_{j=0}^N \K'_{a-j}(F_0)^{\binom{N}{j}}. \]
For finite fields we have $K_0^M(F_0)=\Z$, $K_1^M(F_0)=F_0^\x\iso\Z/(q-1)$ and $\K^M_a(F_0)=0$ for $a\neq 0,1$. Thus the claim.
\end{proof}
\end{Lem}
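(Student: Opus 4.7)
The plan is an induction on $N$ via the Bass--Tate short exact sequence (cf.\ \cite{BT73} Ch.~I \S4, \cite{Ras95})
\[ 0 \to U_a^1(F_N) \to \K_a^M(F_N) \to \K_a^M(F_{N-1}) \oplus \K_{a-1}^M(F_{N-1}) \to 0, \]
where $U_a^1(F_N)$ is the subgroup generated by symbols $\{b_1,\dots,b_a\}$ with first slot $b_1 \in 1+\fm_N$ and the right-hand map is the tame-symbol-type quotient attached to a chosen uniformizer of $\O_N$. The core of the argument will be to show that $U_a^1(F_N)$ is $\ell$-divisible for every prime $\ell\neq p=\ch(F_0)$, so that it lies inside the maximal $\Ll$-divisible subgroup of $\K_a^M(F_N)$, where $\Ll$ is the set of all primes different from $p$.

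To establish this divisibility I would reduce, by multilinearity of symbols, to showing that the multiplicative group $1+\fm_N$ itself is $\ell$-divisible, since an $\ell$-th root in the first slot propagates through any generator of $U_a^1(F_N)$. The standard filtration $(1+\fm_N^i)_{i\ge 1}$ has successive quotients isomorphic to the additive group of $F_{N-1}$, on which multiplication by $\ell$ is a bijection (because $\ch F_{N-1} \in \{0,p\}$ and $\ell \neq p$). Completeness of $F_N$---equivalently, convergence of the binomial series $(1+x)^{1/\ell}$, which is legal since $\ell \in \O_N^\x$---then lifts $\ell$-th roots through the inverse limit. This is the only analytic input and the main (small) subtlety of the argument.

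Once $U_a^1(F_N)$ is known to be $\Ll$-divisible, I would invoke the general formal fact that in any short exact sequence $0 \to A \to B \to C \to 0$ with $A=A_{\Lb\Di}$ the map $B \to C$ sends $B_{\Lb\Di}$ onto $C_{\Lb\Di}$ with preimage exactly $B_{\Lb\Di}$ (using that extensions of $\Ll$-divisibles by $\Ll$-divisibles are $\Ll$-divisible), yielding $B/B_{\Lb\Di} \iso C/C_{\Lb\Di}$. Since the maximal $\Ll$-divisible subgroup of a direct sum splits as a direct sum of its pieces, this produces the recursion $\K'_a(F_N) \iso \K'_a(F_{N-1}) \oplus \K'_{a-1}(F_{N-1})$, which iterates via Pascal's identity to $\K'_a(F_N) \iso \bigoplus_{j=0}^N \K'_{a-j}(F_0)^{\binom{N}{j}}$.

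To close I would compute the base case: for the finite field $F_0$ of order $q$, one has $\K_0^M(F_0)=\Z$, $\K_1^M(F_0)=F_0^\x\iso\Z/(q-1)$, and $\K_a^M(F_0)=0$ for $a\ge 2$ by a Steinberg-type vanishing. Since $q-1$ is coprime to $p$, no non-trivial subgroup of $\Z/(q-1)$ or of $\Z$ is $\Ll$-divisible, so $\K'_\bullet(F_0)=\K_\bullet^M(F_0)$; only the terms $j\in\{a,a-1\}$ in the iterated sum survive, yielding the displayed formula $\K'_a(F_N) \iso \Z^{\binom{N}{a}} \oplus (\Z/(q-1))^{\binom{N}{a-1}}$.
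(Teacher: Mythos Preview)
Your proposal is correct and follows essentially the same route as the paper: the Bass--Tate exact sequence, $\ell$-divisibility of $1+\fm_N$ (and hence of $U^1_a(F_N)$) via the standard filtration with successive quotients $F_{N-1}$, the resulting recursion, and the explicit base case over $F_0$. You are somewhat more explicit than the paper in spelling out the purely group-theoretic step that passes from the short exact sequence with $\Ll$-divisible kernel to an isomorphism modulo maximal $\Ll$-divisible subgroups, but this is the same argument.
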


\begin{Lem}
\label{coh-hdf}
Let $F=F_N$ be a $N$-local field with final residue characteristic $p=\ch(F_0)$, $q=\#F_0$. Put $\Q/\Z' = \bigoplus_{\ell \neq p} \Q_\ell/\Z_\ell$. Fix integers $a,b$.
\begin{enumerate}
\item 
For $N=0$ we have
\[ \H^a(F_0,\Q/\Z'(b)) = \left \{ \begin{array}{llll}
  0 &\text{ for } & a\neq0,1,& \\
 \Q/\Z' &\text{ for } & a=0,& b=0, \\
 \Q/\Z' &\text{ for } & a=1,& b=0, \\
 \Z/(q^b-1) &\text{ for } & a=0,&b\neq0, \\
  0 &\text{ for } & a=1,&b\neq0. \\
\end{array}\right.  \]
\item 
For $N \ge 0$ and $a-b \neq 0,1$ the groups $\H^a(F_N,\Q/\Z'(b))$ are finite of an order dividing $(q^{b-a}-1)^{\binom{N}{a}}$.
\item
For $N \ge 0$, $a-b =1$ we have 
\[ \H^a(F_N,\Q/\Z'(a-1)) \iso (\Z/(q-1))^{\binom{N}{a}} \oplus (\Q/\Z')^{\binom{N}{a-1}}. \]
\item For $N \ge 0$, $a-b=0$ we have
\[ \H^a(F_N,\Q/\Z'(a)) \iso (\Q/\Z')^{\binom{N}{a}}.\]
\end{enumerate}
\begin{proof}
For a finite field $F_0$ with $q$ elements the $b$-twisted cyclotomic character $\chi^b$ of the Frobenius element $\phi_q$ is $\chi^b(\phi_q)=q^b$. Since $G:=\Gal_{F_0} \iso \hat \Z$ and $\cd F_0 =1$ we have 
\[\H^1(F_0,\Q/\Z'(b))=\Q/\Z'(b)_G=(\Q/\Z')/(q^b-1)=0,\]
and 
\[\H^0(F_0,\Q/\Z'(b))=\Q/\Z'(b)^G=\{x \in \Q/\Z'\;|\; (q^b-1)x=0\}=\frac{1}{q^b-1}\Z/\Z,\]
for $b \neq 0$ and $\Q/\Z'$ for $b=0$.\\
For $N \ge 0$ we by \cite{Kat82b} Thm. 3 have the recursion formula: 
\[ \H^a(F_N,\Q/\Z'(a-1)) \iso \H^a(F_{N-1},\Q/\Z'(a-1)) \oplus \H^{a-1}(F_{N-1},\Q/\Z'(a-2))\]
leading to 
\[ \H^a(F_N,\Q/\Z'(a-1)) \iso \bigoplus_{j=0}^N \H^{a-j}(F_0,\Q/\Z'(a-j-1))^{\binom{N}{j}}. \]
The formulas for finite fields now give the claim.\\
For $a-b=0$ the Bloch-Kato conjecture implies the isomorphism
\[\H^a(F_N,\Q/\Z'(a)) \iso \K^M_a(F_N) \otimes \Q/\Z' \iso \K'_a(F_N) \otimes \Q/\Z'.\]
The recursion formula \ref{k-grp-hdf} then gives the claim.\\
For $a-b \neq 0,1$ we use the excision sequence, proper base change and absolute purity on the tripel $\eta=\Spec(F_N)$, $S=\Spec(\O_N)$, $s=\Spec(F_N)$ to get the short exact sequence
\[  \H^a(F_{N-1},\Q/\Z'(b)) \longrightarrow \H^a(F_N,\Q/\Z'(b)) \longrightarrow \H^{a-1}(F_{N-1},\Q/\Z'(b-1)),  \]
which by induction gives the claim on the finiteness of these groups.
\end{proof}
\end{Lem}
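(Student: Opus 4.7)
\medskip

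\noindent\textbf{Proof plan for the final lemma.}

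The plan is to prove the four parts in the given order, with parts (2) and (3) by induction on $N$, and part (4) by reduction to Lemma~\ref{k-grp-hdf} via Bloch-Kato.

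For (1), the idea is a direct computation. Since $G:=\Gal_{F_0}\iso \hat \Z$ has cohomological dimension $1$ and is topologically generated by the Frobenius $\phi_q$, which acts on $\Q/\Z'(b)$ by multiplication by $q^b$ (where $q\in\Z_\ell^\times$ for every $\ell\neq p$, so $q^b$ makes sense for all $b\in\Z$), the only possibly non-vanishing cohomology groups are $\H^0=\ker(q^b-1)$ and $\H^1=\coker(q^b-1)$ on $\Q/\Z'$. Because $\Q/\Z'$ is divisible, the cokernel is $\Q/\Z'$ for $b=0$ and vanishes for $b\neq0$; the kernel is $\Q/\Z'$ for $b=0$ and isomorphic to $\Z/(q^{|b|}-1)$ for $b\neq0$ after summing the $\ell$-parts $\Z/\ell^{v_\ell(q^{|b|}-1)}$ and using $p\nmid q^{|b|}-1$. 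This gives all four cases of (1).

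For (2) and (3), I would use the excision sequence for the henselian triple $\eta_N=\Spec(F_N)\hookrightarrow S_N=\Spec(\O_N)\hookleftarrow s_N=\Spec(F_{N-1})$. Proper base change (for the henselian $S_N$) identifies $\H^a(S_N,\Q/\Z'(b))$ with $\H^a(F_{N-1},\Q/\Z'(b))$, while Gabber's absolute purity \cite{Fuj02} identifies $\H^a_{s_N}(S_N,\Q/\Z'(b))$ with $\H^{a-2}(F_{N-1},\Q/\Z'(b-1))$. Choosing a uniformizer $\pi\in\O_N$ gives a cup-product splitting $x\mapsto x\cup(\pi)$ for the residue map (valid since $\ell\neq p$), so the six-term sequence splits into short split exact sequences
\[ 0\longrightarrow \H^a(F_{N-1},\Q/\Z'(b))\longrightarrow \H^a(F_N,\Q/\Z'(b))\longrightarrow \H^{a-1}(F_{N-1},\Q/\Z'(b-1))\longrightarrow 0,\]
generalising the Kato recursion formula cited for $b=a-1$. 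Iterating $N$ times and using Pascal's rule yields
\[ \H^a(F_N,\Q/\Z'(b))\iso\bigoplus_{j=0}^N\H^{a-j}(F_0,\Q/\Z'(b-j))^{\binom{N}{j}}.\]
The key point is that every summand satisfies $(a-j)-(b-j)=a-b$, so part (1) controls each term. For (2), when $a-b\neq0,1$, only $j=a$ contributes, producing the claimed factor $(\Z/(q^{b-a}-1))^{\binom{N}{a}}$. For (3), when $b=a-1$ so $a-b=1$, only $j=a-1$ (giving $(\Q/\Z')^{\binom{N}{a-1}}$) and $j=a$ (giving $(\Z/(q-1))^{\binom{N}{a}}$) contribute, matching the stated formula.

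For (4), I would invoke the Bloch-Kato conjecture (now a theorem) to obtain $K^M_a(F_N)/n\iso\H^a(F_N,\mu_n^{\otimes a})$ for every $n$ invertible in $F_N$, and then pass to the colimit over $n\in\N(\Ll)$ to get $K^M_a(F_N)\otimes\Q/\Z'\iso \H^a(F_N,\Q/\Z'(a))$. The natural surjection $K^M_a(F_N)\twoheadrightarrow K'_a(F_N)$ becomes an isomorphism after tensoring with $\Q/\Z'$, because any $p'$-divisible subgroup $D$ satisfies $D\otimes\Q/\Z'=\inlim_{n\in\N(\Ll)}D/n=0$. Plugging in Lemma~\ref{k-grp-hdf}, namely $K'_a(F_N)\iso\Z^{\binom{N}{a}}\oplus(\Z/(q-1))^{\binom{N}{a-1}}$, and using $\Z\otimes\Q/\Z'=\Q/\Z'$ together with $\Z/(q-1)\otimes\Q/\Z'=0$ (since $\Q/\Z'$ is divisible), produces $(\Q/\Z')^{\binom{N}{a}}$ as claimed. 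The main technical point in the whole argument is justifying the splitting of the excision sequence by cup product with a uniformizer in all bidegrees, not just for $b=a-1$; once this is in place, everything else is bookkeeping with binomial coefficients and the finite-field values from (1).
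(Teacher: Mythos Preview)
Your proof is correct and, for parts (2) and (3), takes a somewhat different route from the paper. The paper treats the three cases $a-b=1$, $a-b=0$, and $a-b\neq 0,1$ by three separate tools: it cites Kato's recursion \cite{Kat82b} Thm.~3 only for the special bidegree $b=a-1$ to obtain the explicit formula in (3); it invokes Bloch--Kato and Lemma~\ref{k-grp-hdf} for (4); and for (2) it uses only the \emph{non-split} excision/purity sequence to conclude finiteness with the stated bound by induction on $N$. You instead establish the split short exact sequence
\[
0\to \H^a(F_{N-1},\Q/\Z'(b))\to \H^a(F_N,\Q/\Z'(b))\to \H^{a-1}(F_{N-1},\Q/\Z'(b-1))\to 0
\]
uniformly in $(a,b)$ via cup product with the Kummer class of a uniformizer, and then read off both (2) and (3) from the resulting binomial decomposition over $F_0$. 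This is a cleaner and more uniform argument; it even yields the exact group structure in (2) rather than just a divisibility bound, and in fact would also recover (4) directly without appealing to Bloch--Kato (only the $j=a$ term survives, giving $(\Q/\Z')^{\binom{N}{a}}$). The splitting you flag as the main technical point is standard: it follows from the Hochschild--Serre spectral sequence for $K^{nr}/K$ together with $\cd_\ell I=1$, or equivalently from the projection formula $\partial(\infl(x)\cup(\pi))=x$ for the residue map, valid for all $\ell$ invertible on $\O_N$. So your worry there is unfounded, and the argument goes through.
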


\begin{Lem}
\label{pi1-structure}
Let $X$ be a connected scheme proper over a $N$-local field $F$ of final residue characteristic $p=\ch(F_0)$.
Then the prime-to-$p$-part of the abelianized étale fundamental group $\pi_1^\ab(X)(p')$ is finitely generated and has only a finite $p'$-torsion subgroup.
\begin{proof}
Put $\hat\Z'=\prod_{\ell \neq p}\Z_\ell$.
Let $F^\alg$ and $F^\sep$ be an algebraic closure, separable closure, resp., of $F$. Put $X^\sep=X\x_F \Spec(F^\sep)$ and $\ol X = X \x_F\Spec(F^\alg)$.
By \cite{Mil80} III §4 Cor. 4.19 we have a short exact sequence on the prime-to-$p$-part
\[ 0 \longrightarrow (NS(\ol X)_\tors)^*(p') \longrightarrow \pi_1^\ab(\ol X)(p') \longrightarrow T\Alb_{\ol X}(\ol F)(p') \longrightarrow 0,  \]
where $(NS(\ol X)_\tors)^*$ is finite and $T\Alb_{\ol X}(\ol F)(p') \iso {\hat{\Z'}}^{2g}$, $g = \dim \Alb_{\ol X}$.
The dual of the Hochschild-Serre spectral sequence \cite{Mil80} III Thm. 2.20 gives an exact sequence
\[ \H^2(F,\Q/\Z')^\vee \longrightarrow \pi_1^\ab(X^\sep)(p')_{\Gal_F} \longrightarrow \pi_1^\ab(X)(p') \longrightarrow \Gal_F^\ab(p') \longrightarrow 0.\]
Since we have $\pi_1^\ab(X^\sep)=\pi_1^\ab(\ol X)$ for proper $X$, and by \ref{coh-hdf} the group
\[
\Gal^\ab_F(p') \iso \H^1(F,\Q/\Z')^\vee \iso \hat \Z' \oplus (\Z/(q-1))^N,\]
is finitely generated, we get the claim for $\pi_1^\ab(X)(p')$.
\end{proof}
\end{Lem}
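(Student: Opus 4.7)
The plan is to sandwich $\pi_1^\ab(X)(p')$ between a geometric piece (derived from $\overline X$) and an arithmetic piece ($\Gal_F^\ab(p')$), and to show that both are finitely generated profinite abelian groups; finiteness of the torsion subgroup then comes for free from the structure theorem for finitely generated profinite abelian groups (a direct sum of a free $\hat\Z'$-module of finite rank with a finite group). The natural tool is the low-degree part of the Hochschild--Serre spectral sequence $\H^p(\Gal_F,\H^q_\et(X^\sep,\Q/\Z')) \Longrightarrow \H^{p+q}_\et(X,\Q/\Z')$ in total degree one, Pontryagin-dualised to produce the exact sequence
\[ \H^2(F,\Q/\Z')^\vee \to \pi_1^\ab(X^\sep)(p')_{\Gal_F} \to \pi_1^\ab(X)(p') \to \Gal_F^\ab(p') \to 0. \]
Since $X$ is proper, $\pi_1^\ab(X^\sep)=\pi_1^\ab(\overline X)$, which reduces the geometric side to the situation over an algebraically closed field.

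For the arithmetic quotient I would compute $\Gal_F^\ab(p') \iso \H^1(F,\Q/\Z')^\vee$ directly from Lemma \ref{coh-hdf}(3) applied with $a=1$, obtaining $\hat\Z' \oplus (\Z/(q-1))^N$, which is finitely generated with finite torsion; the same lemma in part (2) with $a=2$, $b=0$ shows that the kernel term $\H^2(F,\Q/\Z')^\vee$ is \emph{finite}. For the geometric side I would invoke the classical short exact sequence
\[ 0 \to (NS(\overline X)_\tors)^{\ast}(p') \to \pi_1^\ab(\overline X)(p') \to T\Alb_{\overline X}(\overline F)(p') \to 0, \]
whose outer terms are respectively a finite group and $(\hat\Z')^{2g}$ with $g = \dim \Alb_{\overline X}$. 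Taking right-exact $\Gal_F$-coinvariants then produces an extension of a continuous quotient of $(\hat\Z')^{2g}$ by a finite group; since $(\hat\Z')^{2g}$ is topologically finitely generated as a module over the Noetherian ring $\hat\Z'$, any continuous quotient inherits this property, so $\pi_1^\ab(\overline X)(p')_{\Gal_F}$ is finitely generated.

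Combining these inputs in the four-term sequence, $\pi_1^\ab(X)(p')$ sits as an extension of the finitely generated group $\Gal_F^\ab(p')$ by the quotient of the finitely generated group $\pi_1^\ab(\overline X)(p')_{\Gal_F}$ by the finite image of $\H^2(F,\Q/\Z')^\vee$, hence is itself finitely generated; the structure theorem then forces the torsion subgroup to be finite. The only real point of care --- which I expect to be routine rather than a genuine obstacle --- is verifying that continuous $\Gal_F$-coinvariants of $(\hat\Z')^{2g}$ remain finitely generated as profinite abelian groups, which follows from Noetherianness of $\hat\Z'$ acting on finitely generated $\hat\Z'$-modules.
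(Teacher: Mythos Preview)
Your proposal is correct and follows essentially the same approach as the paper's own proof: the dualised Hochschild--Serre four-term sequence, the identification $\pi_1^\ab(X^\sep)=\pi_1^\ab(\overline X)$, the N\'eron--Severi/Albanese description of $\pi_1^\ab(\overline X)(p')$, and the computation of $\Gal_F^\ab(p')$ via Lemma~\ref{coh-hdf}. If anything, you are slightly more explicit than the paper in invoking Lemma~\ref{coh-hdf}(2) for the finiteness of $\H^2(F,\Q/\Z')^\vee$ and in spelling out why the coinvariants $\pi_1^\ab(\overline X)(p')_{\Gal_F}$ remain finitely generated.
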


The following result is a finiteness result of the first homology groups of the Gersten complex, which are encoded in the groups $E_{a,1}^2$. For its proof an application of Kato's conjecture about the cohomological Hasse principle is needed:

\begin{Prp}
\label{gersten-null}
Let $F=F_N$ be a $N$-local field with complete discrete valuation ring $\O_N$ and final residue characteristic $p=\ch(F_0)$, $q=\#F_0$. 
Let $\X=\bigcup_i \X_i$ with irreducible components $\X_i$ such that all $\X^{[r]}$, $r \ge 0$, are regular, proper and flat over $S=\Spec(\O_N)$ of $\dim_S\X=d+1$ consider the niveau spectral sequence 
$ E^1_{r,q}(\X,\Z/n)
\Longrightarrow \H_{r+q}(\X,\Z/n)$
for the étale homology theory $\H^\et(\_,\Z/n)$ from \ref{etale-hom-def}.
\\
Then there is an integer $C \ge 0$ (independent of $n$) such that 
\[ \#E^2_{a,1}(\X,\Z/n) \;\le\; C  \]
for $a=0,1$ and every integer $n \in \Z$ with $p \nmid n$.
\begin{proof}
First assume that $\X$ is regular and connected and $n=\ell^r$ for a prime $\ell$. If $d=0$ then $\X$ is a finite extension of $S$ and the complex $E^1_{\bullet,1}(\X)$ can like in \cite{Kat86} Lem. 1.4 or \cite{JS03} be identified with the boundary map
\[ \K^M_{N-1}(F_{N-1})/n \longleftarrow \K^M_{N}(F_{N})/n, \]
which by \ref{k-grp-hdf} is surjective with kernel $\K'_{N}(F_{N-1})/n \iso (\Z/(q^b-1))/n$, where $b$ is the dimension of the final residue field of $\X$ over $F_0$. So $E^2_{0,1}(\X)=0$ and $\#E^2_{1,1}(\X)|(q^b-1)$.\\
Now let $d \ge 1$. By {\bf PB} and the $\ell'$-alteration theorem \cite{Ill09} we can assume $\X$ to be regular, projective with quasi-semi-stable reduction. By the Bertini theorem \cite{JS09} there is a horizontal irreducible regular divisor $W$ such that $W \cup \X_{s,\rd}$ is a sncd and $U=\X\sm W$ is affine. By {\bf KC}  we get
\[ 0= \KH_a(\X) \to \KH_a(U) \to \KH_{a-1}(W) =0, \]
so $\KH_a(U)=0$ for $a \le 3$. Thus the spectral sequence induce a commutative diagram 
\[\xymatrix{ 
&\H_{a+1}(\X) \ar[r] \ar^-\epsilon@{->>}[d]& \H_{a+1}(U) \ar^-\epsilon@{->>}[d]\\
E^2_{a,1}(W) \ar[r]& E^2_{a,1}(\X) \ar[r] &E^2_{a,1}(U), }\]
for $a=0,1$, where $\epsilon$ are the edge morphisms, which are surjective for $a=0,1$. By {\bf SAL} we have $\H_{a+1}(U)=0$ for $a+1 \le 2\le d+1$. So $E^2_{a,1}(W)$ surjects onto $E^2_{a,1}(\X)$. So by induction we get $E^2_{0,1}(\X)=E^2_{0,1}(W)=0$ and $\#E^2_{1,1}(\X)|\#E^2_{1,1}(W)|(q^b-1)$.
Since the degree of the used alteration is finite and works for almost all primes $\ell \neq p$, we only need to consider finitely many alterations and the independece of $n$ is clear. \\
Now let $\X=\bigcup \X_i$ then there is a descent spectral sequence like in {\bf DSS}:
\[ E^2_{a-r,1}(\X^{[r]}) \stackrel{r \ge 0}{\Longrightarrow} E^2_{a,1}(\X).  \]
For $0 \le a-r \le 1$ the left hand side is universally bounded, so for $a=0,1$ the right hand side, too. 
\end{proof}
\end{Prp}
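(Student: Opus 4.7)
\textbf{Proof plan for Proposition \ref{gersten-null}.}
The plan is to reduce from the general union $\X=\bigcup_i\X_i$ to the case of a single regular, connected component via the descent spectral sequence \textbf{DSS}, then induct on $d=\dim_S\X$, using the cohomological Hasse principle \textbf{KC} from Theorem \ref{etale-hasse} together with \textbf{SAL} to propagate vanishing/bounds from a hyperplane section back to $\X$. By the standard splitting into $\ell$-parts, it suffices to treat $n=\ell^r$ with $\ell\neq p$; the uniformity in $n$ will come from the observation that the alterations and hyperplane sections used can be chosen independently of $\ell$ once $\ell$ avoids a fixed finite set of primes.

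First I would settle the base case $d=0$: here $\X\to S$ is finite, and following the identification of \cite{Kat86}~Lem.~1.4 or \cite{JS03}~\S5 via the Bloch--Kato isomorphism, the complex $E^1_{\bullet,1}(\X,\Z/\ell^r)$ becomes the Milnor boundary map $\K^M_N(F_N)/\ell^r\to \K^M_{N-1}(F_{N-1})/\ell^r$ at each closed point of $\X$. Lemma \ref{k-grp-hdf} gives its kernel and cokernel explicitly, so $E^2_{0,1}(\X)=0$ and $\#E^2_{1,1}(\X)$ divides $(q^b-1)$ with $b$ bounded in terms of the final residue fields of $\X$, independently of $\ell^r$.

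For the inductive step $d\ge 1$ I would first apply a $p$-alteration (Theorem \ref{alteration}) together with \textbf{PB} to reduce to the case of $\X$ regular and projective with quasi-semi-stable reduction, then apply the Bertini theorem \ref{bertini} to produce a horizontal irreducible divisor $W$ with $W\cup\X_{s,\rd}$ a sncd and $U:=\X\setminus W$ affine. The key input now is \textbf{KC}: since $\X$ satisfies the hypotheses of Theorem~\ref{etale-hasse}, an induction using the localization sequence yields $\KH_a(U)=0$ for $a\le 3$. Combined with \textbf{SAL}, which gives $\H_{a+1}(U,\Z/\ell^r)=0$ for $a+1\le d+1$, the surjectivity of the edge morphism $\H_{a+1}\twoheadrightarrow E^2_{a,1}$ (valid for $a=0,1$ by the support of the spectral sequence) forces the localization map $E^2_{a,1}(W)\twoheadrightarrow E^2_{a,1}(\X)$ to be surjective in the diagram
\[
\xymatrix{
\H_{a+1}(\X)\ar[r]\ar@{->>}[d]& \H_{a+1}(U)=0\ar@{->>}[d]\\
E^2_{a,1}(W)\ar[r]& E^2_{a,1}(\X)\ar[r]& E^2_{a,1}(U).
}
\]
Induction on $d$ applied to $W$ (which has dimension $d$) then yields the claim for regular connected $\X$.

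Finally, for a general $\X=\bigcup_i\X_i$ I would invoke the descent spectral sequence \textbf{DSS} applied to $E^2_{\bullet,1}$: one gets $E^2_{a-r,1}(\X^{[r]})\Rightarrow E^2_{a,1}(\X)$, and for $a=0,1$ only finitely many terms with $0\le a-r\le 1$ contribute, each universally bounded by the regular case. The main obstacle is to ensure that the constant $C$ is truly independent of $\ell$: this rests on the fact that the chosen $p$-alteration and Bertini divisor can be fixed once and for all, the degree of the alteration is invertible modulo $\ell$ for all but finitely many $\ell$ (handled separately and combined into $C$), and the bounds $(q^b-1)$ arising from the base case depend only on geometric data of $\X$, not on $\ell^r$.
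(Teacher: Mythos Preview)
Your proposal is correct and follows essentially the same route as the paper: reduce to regular connected $\X$ via \textbf{DSS}, handle $d=0$ via the Milnor $K$-group computation of Lemma~\ref{k-grp-hdf}, and for $d\ge1$ use an alteration plus Bertini to get an affine $U$ with $\KH_a(U)=0$ (from \textbf{KC}) and $\H_{a+1}(U)=0$ (from \textbf{SAL}), forcing $E^2_{a,1}(W)\twoheadrightarrow E^2_{a,1}(\X)$. The only difference is cosmetic: the paper invokes the $\ell'$-alteration of \cite{Ill09} and the Bertini theorem of \cite{JS09} (both over the discrete valuation ring $\O_N$), whereas you cite the paper's own rank-$N$ results \ref{alteration} and \ref{bertini}; since $\O_N$ has rank $1$ either works, and in fact your choice of a $p$-alteration makes the degree a $p$-power and hence coprime to \emph{every} $\ell\neq p$, so your caveat about ``all but finitely many $\ell$'' is unnecessary in your setup.
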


\begin{Lem}
\label{finite-norm}
Let $F=F_N$ be a $N$-local field with complete discrete valuation ring $\O_N$ and final residue characteristic $p=\ch(F_0)$, $q=\#F_0$. 
\begin{enumerate}
\item Let $\X=\bigcup_i \X_i$ be a connected scheme with irreducible components $\X_i$ such that all $\X^{[r]}$, $r \ge 0$, are regular, proper and flat
over $S=\Spec(\O_N)$. Let $\Ll$ be a set of prime numbers different from $p$. Then the specialization map 
\[  V'_N(\X_{\eta})_\Ll \to V'_{N-1}(\X_{s,\rd})_\Ll \]
has finite kernel and cokernel.
\item Let $X=\bigcup_iX_i$ be a connected scheme proper over $F$ with irreducible components $X_i$. Then the cokernel of
\[V'_N(X^{[0]}) \to V'_N(X)\]
is finite.
\end{enumerate}
\begin{proof}
1.) Let $X=\X_\eta$ and $Y=\X_{s,\rd}$ and $n\in \N(\Ll)$. As in \ref{facts} we have identifications 
\[\begin{array}{lllll} 
E^2_{1,1}(X,\Z/n) &\iso& \SK_N(X)/n     &\iso&  \SK'_N(X)/n\\
E^2_{0,1}(Y,\Z/n) &\iso& \SK_{N-1}(Y)/n & \iso &\SK'_{N-1}(Y)/n.\end{array}  \]
Consider the diagram where $K_n$ and $C_n$ are the kernels of the right horizontal maps:
\[\xymatrix{   
0\ar[r] & K_n \ar[d]\ar[r]& E^2_{1,1}(\X,\Z/n)\ar[r]\ar[d] & E^2_{1,1}(S,\Z/n) \ar[d] \\
0\ar[r] & V'_N(X)/n \ar[d]\ar[r]& \SK_N'(X)/n \ar[r]\ar[d] & K'_N(F_N)/n \ar[r]\ar[d] &0 \\
0\ar[r] & V'_{N-1}(Y)/n\ar[r] \ar[d]& \SK_{N-1}'(Y)/n\ar[r] \ar[d]& K'_{N-1}(F_{N-1})/n\ar[r]\ar[d] &0 \\
0 \ar[r]& C_n \ar[r]& E^2_{0,1}(\X,\Z/n)\ar[r] & E^2_{0,1}(S,\Z/n). 
} \]
The two middle horizontal sequences are exact up to finite groups bounded by $\#K'_N(F_N)_\tors*\#\coker N_{X|F_N}$, 
$\#K'_{N-1}(F_{N-1})_\tors*\#\coker N_{Y|F_{N-1}}$, resp. (cf. \ref{k-grp-hdf}). By \ref{gersten-null} the $E^2$-terms are finite of a bounded order $C$ independent of $n$. This shows that $K_n$ and $C_n$ are also finite and universally bounded. Now takening the limit over $n \in \N(\Ll)$ give the claim (cf. \cite{For11b} Lem. 3.2).\\
2.) The commutative diagram
\[\xymatrix{   
0\ar[r] & V'_N(X^{[1]}) \ar[d]\ar[r]& \SK_N'(X^{[1]})\ar[r]\ar[d] & K'_N(F)^{(\pi_0(X^{[1]}))}\ar[r]\ar[d] & 0 \\%
0 \ar[r]& V'_N(X^{[0]})\ar[r]\ar^-\phi[d] & \SK_N'(X^{[0]}) \ar[r]\ar@{->>}[d]& K'_N(F)^{(\pi_0(X^{[0]}))} \ar[d]\ar[r]& 0 \\
0\ar[r] & V'_N(X)\ar[r] & \SK_N'(X)\ar[r] & K'_N(F)\ar[r] & 0} \]
has exact rows up to finite groups. The vertical maps are complexes. The middle down map is surjective and the middle right homology group vanishes, since $X$ is connected. So $\coker(\phi)$ is finite by the long exact sequence for the homology groups.
\end{proof}
\end{Lem}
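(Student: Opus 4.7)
For part (1), the plan is to work with $\Z/n$ coefficients for each $n \in \N(\Ll)$ separately and then pass to the $\Ll$-completion. Using the identifications from \ref{facts}, one has $\SK'_N(\X_\eta)/n \iso \SK_N(\X_\eta)/n \iso E^2_{1,1}(\X_\eta,\Z/n)$ and analogously $\SK'_{N-1}(\X_{s,\rd})/n \iso E^2_{0,1}(\X_{s,\rd},\Z/n)$; the passage between $\SK$ and $\SK'$ is harmless since $n$ is coprime to the primes being divided out. I would then assemble a ladder whose two outer rows come from the edge map $E^2_{*,1}(\X,\Z/n) \to E^2_{*,1}(S,\Z/n)$ of the niveau spectral sequence applied to the structure morphism, and whose two inner rows are the defining sequences $0 \to V'_m(\cdot)/n \to \SK'_m(\cdot)/n \to K'_m(F_m)/n \to 0$ for $m=N$ and $m=N-1$. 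The inner rows are exact only up to finite error, but that error is bounded independently of $n$: the torsion of $K'_m(F_m)$ is finite by \ref{k-grp-hdf}, and $\coker N_{\cdot\mid F_m}$ is finite by Kato's reciprocity isomorphism as recorded in \ref{norm-rec-mod-div}.

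Next I would invoke \ref{gersten-null} to bound the four $E^2$-groups $E^2_{a,1}(\X,\Z/n)$ for $a=0,1$ by a constant $C$ independent of $n$ (the terms $E^2_{a,1}(S,\Z/n)$ are already controlled directly by \ref{k-grp-hdf}). A standard diagram chase then shows that the kernel and cokernel of $V'_N(\X_\eta)/n \to V'_{N-1}(\X_{s,\rd})/n$ are finite of order bounded uniformly in $n \in \N(\Ll)$. With this uniform bound in hand, the inverse system satisfies Mittag-Leffler, so $\Ll$-completion commutes with taking kernels and cokernels, which yields the claimed finiteness after taking the limit.

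The main obstacle is precisely the uniform boundedness in $n$ required for the $\Ll$-completion step — without it, infinite profinite tails could creep into the cokernel of the completion. This uniformity is exactly the content of \ref{gersten-null}, which itself rests on Kato's conjecture \ref{etale-hasse}, so the cohomological Hasse principle is the crucial input powering the whole argument.

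For part (2), I would compare the defining sequence $0 \to V'_N \to \SK'_N \to K'_N(F) \to 0$ (exact up to finite error) applied to $X^{[1]}$, $X^{[0]}$, and $X$. The middle column $\SK'_N(X^{[0]}) \to \SK'_N(X)$ is surjective by construction: $\SK_N(X)$ is built from the closed points of $X$, which are all covered by the closed points of $X^{[0]} = \coprod_i X_i$. The right column gives the complex $K'_N(F)^{(\pi_0(X^{[1]}))} \to K'_N(F)^{(\pi_0(X^{[0]}))} \to K'_N(F)$ whose final map is surjective because $X$ is connected and whose middle homology is trivial for the same reason. A snake-lemma computation on the vertical three-term complexes then confines $\coker(V'_N(X^{[0]}) \to V'_N(X))$ between the finite error terms arising from the $\SK'$-to-$K'_N(F)$ rows, producing the required finiteness.
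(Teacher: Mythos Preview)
Your proposal is correct and follows essentially the same approach as the paper: for part~(1) you build the same four-row ladder comparing the $E^2_{*,1}$-terms of $\X$ and $S$ with the $V'/\SK'/K'$ sequences for $X=\X_\eta$ and $Y=\X_{s,\rd}$, invoke \ref{gersten-null} for the uniform bound, and pass to the $\Ll$-limit; for part~(2) you set up the identical three-row diagram on $X^{[1]},X^{[0]},X$ and use connectedness of $X$ to kill the homology of the right column. The only cosmetic difference is that the paper names the outer kernels $K_n$ and $C_n$ explicitly and cites \cite{For11b} Lem.~3.2 for the limit step where you appeal to Mittag--Leffler directly.
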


\begin{Lem}
\label{V-tor}
 Let $X$ be a scheme proper and geometrically irreducible over a $N$-local field $F$ of final residue characteristic $p=\ch(F_0)$, then $V_N'(X)$ is a torsion group and has a finite image in $\pi_1^\geo(X)(p')$ under the reciprocity map $'\rho^X$.
\begin{proof}
First let $X$ be a regular curve. Then by \ref{sncmr-curves} one inductively can take simple normal crossing models of $X$ and its reduction. Let $Y$ be the final reduction over the finite field $F_0$. By Bloch, Kato, Saito \cite{KS86} Thm. 6.1 or \cite{Ras95} the group $V'_0(Y)$ and thus $V'_0(Y)_\Ll$ is finite. 
By \ref{finite-norm} then also $V'_N(X)_\Ll$ is finite. Since by definition $V'_N(X)$ has no $\Ll$-divisible subgroup, we have an injection $V'_N(X) \inj V'_N(X)_\Ll$. So $V'_N(X)$ is finite for regular curves $X$.\\
Now one can proceed like \cite{For11b} Prp. 2.1. The claim for curves follows then by taking the nomalization. For any other variety then one can let run a curve through a given cycle after a suitable field extension. The image of the curve is then still torsion.\\
The finite image in $\pi_1^\geo(X)(p')$ then comes from the finiteness of the $p'$-torsion part of $\pi_1^\ab(X)$ by \ref{pi1-structure}.
\end{proof}
\end{Lem}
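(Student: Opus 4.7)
The plan is to reduce the torsion statement first to curves and then to the finite-field case, at which point the classical results of Bloch--Kato--Saito and Lemma \ref{finite-norm} take over. Let $\Ll$ denote the set of primes different from $p$.

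I would first treat the case where $X$ is a regular (geometrically irreducible) curve over $F=F_N$. Iterating Theorem \ref{sncmr-curves} along the tower of residue fields $F_N, F_{N-1}, \dots, F_0$ produces a descending chain of simple normal crossing models whose final reduction $Y$ is a proper (possibly reducible) simple normal crossing curve over the finite field $F_0$. For $Y$ the groups $V'_0(Y)$ are known to be finite by Bloch--Kato--Saito (\cite{KS86} Thm.~6.1, cf.\ \cite{Ras95}), and hence so is $V'_0(Y)_\Ll$. Applying Lemma \ref{finite-norm}.1 successively at each step of the reduction tower, $V'_N(X)_\Ll$ is finite as well. Since by construction $V'_N(X)$ has no non-trivial $\Ll$-divisible subgroup, the natural map $V'_N(X) \hookrightarrow V'_N(X)_\Ll$ is injective, so $V'_N(X)$ itself is finite, in particular torsion. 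Singular curves are then handled by passing to the normalization $\tilde X \to X$ and using that the induced map on $V'_N$ has controlled kernel and cokernel (compare \cite{For11b} Prp.~2.1).

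For higher-dimensional $X$ I would use a moving argument in the style of \cite{For11b}: any class $\alpha \in V'_N(X)$ is represented by a finite sum of symbols supported at finitely many closed points of $X$. After a suitable finite extension $F'|F$ one can produce an irreducible curve $C \subset X_{F'}$ passing through the supports, and the norm from $F'$ down combined with the curve case shows that a multiple of $\alpha$ vanishes in $V'_N(X)$; hence $V'_N(X)$ is torsion. Finally, the image of this torsion group under the reciprocity map lands in $\pi_1^\geo(X)(p')$, which by Lemma \ref{pi1-structure} is a finitely generated $\hat{\Z}'$-module with finite torsion subgroup, so the image is automatically finite.

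The main obstacle is the moving argument: one must guarantee the existence of an irreducible curve (after base extension) passing through a prescribed finite set of closed points of a geometrically irreducible $X$ over the higher local field $F$, which requires a Bertini-type input in this setting. Once this is granted, everything else is a bookkeeping exercise combining Lemma \ref{finite-norm}, Lemma \ref{pi1-structure}, and the finite-field base case.
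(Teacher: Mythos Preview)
Your proposal is correct and follows essentially the same route as the paper's proof: the regular curve case via iterated snc models and Lemma~\ref{finite-norm}, reduction to normalization for singular curves, and the curve-through-a-cycle argument (after base extension) for higher-dimensional $X$, all as in \cite{For11b} Prp.~2.1. The paper likewise glosses over the Bertini-type existence of such a curve by deferring to \cite{For11b}, so your flagged ``main obstacle'' is precisely the step the paper treats as known.
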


\begin{Thm}
\label{finite-ker}
Let $F=F_N$ be a $N$-local field with complete discrete valuation ring $\O_N$ with final residue field characteristic $p=\ch(F_0)$.
Let $X$ be a regular connected scheme proper over $F$ of dimension $d \ge 0$.  
Then $V'_N(X)$ and $\ker'\rho^X$ are finite and the kernel $\ker \rho^X$ of the reciprocity map 
\[\rho^X:\quad \SK_N(X) \to \pi_1^\ab(X)\]
is the direct sum of a finite group and a group which is $\ell$-divisible for every $\ell \neq p$.\\
Furthermore, if $d=1$ then $\ker \rho^X$ is divisible by any prime $\ell \neq p$ and the cokernel can be decomposed
\[\coker\rho^X \iso D \oplus \H_1(\G_{Y},\Z_\Ll),\]
where $Y$ is the final fibre of inductively chosen simple normal crossing models (\ref{sncmr-curves}) and $D$ is a uniquely $\ell$-divsible group for any $\ell \neq p$. In particular, $\coker \rho^X$ is $\ell$-torsion free for any $\ell \neq p$.
\begin{proof}
Let $\Ll$ be the set of all primes not containing $p$. 
By \ref{V-tor} and \ref{norm-rec-mod-div} we have that $\ker'\rho^X \ins V'_N(X)$ is torsion of finite index.
By \cite{For11b} Lem. 3.2 we have that also $(\ker'\rho^X)_\Ll$ injects into $V'_N(X)_\Ll$ with finite index. 
And since the torsion subgroups of $\pi_1^\ab(X)(p')$ and $\coker'\rho^X$ are finite by  \ref{pi1-structure} and using higher local class field theory, again by \cite{For11b} Lem. 3.2 we have a exact sequence
\[ 0 \to (\ker'\rho^X)_\Ll \to \SK'_N(X)_\Ll \stackrel{'\rho^X_\Ll}{\longrightarrow} \pi_1^\ab(X)_\Ll \to (\coker'\rho^X)_\Ll\to 0,  \] 
and the limit in \ref{facts} gives rise to a surjection
\[\KH_3(X,\Z_\Ll)= \prlim_{n \in \N(\Ll)}\KH_3(X,\Z/n) \srj (\ker'\rho^X)_\Ll. \]
By \ref{alteration} there is an alteration
 $\pi: X' \to X$ such that $X'$ is regular and the generic fibre of a projective scheme $\hat \X$ over the rank-$N$-valuation ring of $F$ with poly-quasi-semi-stable reduction. If we remove the finite number of primes dividing the degree $r:=\deg(\pi)$ from $\Ll$ we by {\bf PB} get a commutative diagram of surjections
(with $Y:=\hat\X_0$): 
\[\xymatrix{
 \H_2(\Gamma_{Y},\Z_\Ll) \ar^-\sim@{=}[r] & \KH_3(X',\Z_\Ll) \ar@{->>}[r] \ar@{->>}[d]& \KH_3(X,\Z_\Ll) \ar@{->>}[d] \\
&(\ker'\rho^{X'})_\Ll \ar[r]& (\ker'\rho^X)_\Ll.
}\]
So the bottom map is a surjection. By \ref{finite-norm} we see that the maps
\[ V'_N(X)_\Ll \injl (\ker'\rho^X)_\Ll \srjl (\ker'\rho^{X'})_\Ll \inj V'_N(X')_\Ll \inj\dots \inj V'_{0}(Y)_\Ll \srjl V'_{0}({Y}^{[0]})_\Ll  \]
are injective, surjective, resp., up to finite groups. And $V'_{0}({Y}^{[0]})_\Ll$ is finite by Bloch, Kato, Saito (cf. \cite{KS86} Thm. 6.1 or \cite{Ras95}).\\
The finite number of primes $\ell \mid r$ can be dealt with in the same way, but separately, by taking $\ell'$-alterations \ref{alteration}. 
So it follows that $V'_N(X)_\Ll$ and $(\ker'\rho^X)_\Ll$ are finite groups for the set $\Ll$ of all primes excluding $p$. Like in \ref{V-tor} $V'_N(X)$ injects into $V'_N(X)_\Ll$ and therefore is finite as well. \\
Now consider the following diagram of exact sequences:
\[\xymatrix{
  0 \ar[r]&C \ar@{^(->}[r]\ar@{^(->}[d]& \SK_N(X)_{\Lb\di} \ar@{^(->}[d]\ar^{\rho^X_p}[r]\ar[r]& \pi_1^\ab(X)(p)\ar@{^(->}[d] \ar[r]& D \ar@{^(->}[d] \\
 0\ar[r]&\ker \rho^X \ar@{^(->}[r]\ar[d]& \SK_N(X) \ar[d]\ar^-{\rho^X}[r]& \pi_1^\ab(X)  \ar@{->>}[d]\ar@{->>}[r]& \coker\rho^X \ar[r]\ar[d]&0 \\
0 \ar[r] &(\ker'\rho^X)_\Ll \ar@{^(->}[r]& \SK_N(X)_\Ll \ar^-{\rho^X_\Ll}[r]\ar@{->>}[d] & \pi_1^\ab(X)_\Ll \ar[d]\ar@{->>}[r]& (\coker'\rho^X)_\Ll \ar[r] \ar[d]& 0\\
 &&D' \ar[r]& 0 \ar[r] &0,\\ 
}\]
where $C:=\ker \rho^X \cap \SK_N(X)_{\Lb\di}$. For any $\ell \neq p$ the $\ell$-torsion-part of $\pi_1^\ab(X)_\Ll$ is finite by \ref{pi1-structure}. From \cite{JS03} Lem. 7.7 then follows that $\SK_N(X)_{\Lb\di}$ is $\Ll$-divisible. Since $\pi_1^\ab(X)(p)$ is $\Ll$-torsion free, also $C$ is $\Ll$-divisible. So $\ker \rho^X$ is the direct sum of $C$ and a subgroup of $(\ker'\rho^X)_\Ll$, which is finite.\\
Now let $X$ be a curve ($d=1$). Then $\KH_3(X,\Z_\Ll)$ and thus $(\ker'\rho^X)_\Ll$ vanish, so $\ker\rho^X=C$ in the diagram above, which is $\Ll$-divisible.
Furthermore, we have
\[(\coker'\rho^X)_\Ll \iso \KH_2(X,\Z_\Ll) \iso \H_1(\Gamma_Y,\Z_\Ll).  \]
By the snake-lemma we get an exact sequence
\[ \pi_1^\ab(X)(p) \to D \to D' \to 0.   \]
By \cite{For11b} Lem. 3.4 (2) the group $D'$ is divisible. It follows that $D$ is a $\Ll$-divisible group. Since 
$\H_1(\G,\Z_\Ll)=\H_1(\G,\Z)\otimes \Z_\Ll$ is a free $\Z_\Ll$-module for curves, we get
\[\coker\rho^X \iso D \oplus \H_1(\Gamma_Y,\Z_\Ll).\]
\end{proof}
\end{Thm}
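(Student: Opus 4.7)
The plan is to combine Kato's cohomological Hasse principle (Theorem \ref{etale-hasse}) and its geometric consequence \ref{fibre-eq} with the $p$-alteration theorem \ref{alteration}, in order to reduce the finiteness problem to the known case of regular proper curves over finite fields due to Bloch, Kato and Saito. Throughout let $\Ll$ denote the set of primes different from $p$, and work with pro-$\Ll$ completions.

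First I would reduce to proving that $(\ker{'\rho^X})_\Ll$ is finite. By Lemma \ref{V-tor}, $V'_N(X)$ is torsion with finite image in $\pi_1^\geo(X)(p')$; the diagram of Remark \ref{norm-rec-mod-div}, together with finiteness of $\coker{'\rho^F}$ and $\coker N_{X|F}$ from higher-dimensional local class field theory, shows $\ker{'\rho^X}\subseteq V'_N(X)$ with finite cokernel. Since $V'_N(X)$ has no nonzero $\Ll$-divisible subgroup by definition, it injects into its pro-$\Ll$-completion, so finiteness of $(\ker{'\rho^X})_\Ll$ forces $V'_N(X)$ and $\ker{'\rho^X}$ to be finite. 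Taking the inverse limit over $n\in\N(\Ll)$ in the diagram of Proposition \ref{facts}, combined with the uniform bound on $E^2_{a,1}$ from \ref{gersten-null} and finiteness of $\pi_1^\ab(X)(p')_\tors$ from \ref{pi1-structure}, yields a surjection $\KH_3(X,\Z_\Ll)\twoheadrightarrow(\ker{'\rho^X})_\Ll$ modulo a universally bounded finite kernel.

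To bound $\KH_3(X,\Z_\Ll)$ I would apply Theorem \ref{alteration} to obtain a $p$-alteration $\pi:X'\to X$ with $X'$ the generic fibre of a projective flat $\hat\X$ over the rank-$N$-valuation ring of $F$ having poly-quasi-semistable reduction. Since $\deg(\pi)$ is a $p$-power and hence a unit in $\Z_\Ll$, property \textbf{PB} makes $\pi^*$ on $\KH_3(\cdot,\Z_\Ll)$ split injective, so it suffices to bound $\KH_3(X',\Z_\Ll)$. Corollary \ref{fibre-eq} then identifies it with the weight homology $\H_2(\G_{\hat\X_0},\Z_\Ll)$ of the dual complex of the final fibre, which is a finitely generated $\Z_\Ll$-module. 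Iterating Lemma \ref{finite-norm}(1) down the tower of reductions $\hat\X_N,\hat\X_{N-1},\dots,\hat\X_0$ and using \ref{finite-norm}(2) to pass from $(\hat\X_0)^{[0]}$ to $\hat\X_0$ transfers finiteness to $V'_0((\hat\X_0)^{[0]})_\Ll$, which is the Bloch-Kato-Saito theorem. The main obstacle will be bookkeeping the finite cokernels produced at each step of this transfer, both across the alteration and along the tower of norm maps and specialization maps; I would handle this uniformly via the ``finite-but-universally-bounded error'' formalism of \cite{For11b} Lem.~3.2, which is exactly what \ref{finite-norm} and \ref{gersten-null} are designed to feed into.

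Once $V'_N(X)$ and $\ker{'\rho^X}$ are known to be finite, set $C:=\ker\rho^X\cap\SK_N(X)_{\Lb\di}$. Since $\pi_1^\ab(X)(p)$ is $\Ll$-torsion-free and, by \cite{JS03} Lem.~7.7 applied with \ref{pi1-structure}, the subgroup $\SK_N(X)_{\Lb\di}$ is itself $\Ll$-divisible, the quotient $\ker\rho^X/C$ injects into $(\ker{'\rho^X})_\Ll$ and is therefore finite, giving the decomposition $\ker\rho^X\cong(\text{finite})\oplus C$ with $C$ divisible by every $\ell\neq p$. For the curve case $d=1$, the Kato complex lives in degrees $\le 2$ so $\KH_3(X,\Z_\Ll)=0$, the finite summand vanishes, and $\ker\rho^X$ becomes fully $\Ll$-divisible. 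For the cokernel, identify $(\coker{'\rho^X})_\Ll\cong\KH_2(X,\Z_\Ll)\cong\H_1(\G_Y,\Z_\Ll)$ via \ref{fibre-eq}, then apply the snake lemma to the comparison between $\rho^X$ and its pro-$\Ll$-version to produce an exact sequence $\pi_1^\ab(X)(p)\to D\to D'\to 0$ with $D'$ divisible by \cite{For11b} Lem.~3.4(2); since $\H_1(\G_Y,\Z_\Ll)$ is a free $\Z_\Ll$-module (the dual complex of a curve's reduction has top degree one), the splitting $\coker\rho^X\cong D\oplus\H_1(\G_Y,\Z_\Ll)$ follows.
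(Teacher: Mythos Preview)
Your proposal follows essentially the same route as the paper's proof: reduce to finiteness of $(\ker{'\rho^X})_\Ll$, produce the surjection $\KH_3(X,\Z_\Ll)\twoheadrightarrow(\ker{'\rho^X})_\Ll$ from the limit of the five-term sequence in \ref{facts}, pass to a good model $X'$ via the alteration theorem \ref{alteration}, descend along the tower of specializations using \ref{finite-norm}, invoke Bloch--Kato--Saito at the bottom, and then split off $C=\ker\rho^X\cap\SK_N(X)_{\Lb\di}$ using \cite{JS03} Lem.~7.7; the curve case and cokernel analysis are also identical.

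Two small remarks. First, your observation that $\deg(\pi)$ is a $p$-power (hence a unit in $\Z_\Ll$) is correct and in fact slightly cleaner than what the paper writes: the paper still removes the primes dividing $\deg(\pi)$ and treats them separately via $\ell'$-alterations, which is a vestige of an earlier version that used $\ell'$- rather than $p$-alterations (cf.\ \ref{alt-rem} point 5). Second, your appeal to \ref{gersten-null} for the surjection $\KH_3\twoheadrightarrow(\ker{'\rho^X})_\Ll$ is a slight misattribution: that lemma bounds $E^2_{a,1}$ for \emph{models} $\X$ over $S$ and is the engine behind \ref{finite-norm}, whereas the surjection itself comes directly from the five-term sequence plus the Mittag--Leffler control supplied by \cite{For11b} Lem.~3.2 and \ref{pi1-structure}. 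Also, the sentence about $\H_2(\Gamma_{\hat\X_0},\Z_\Ll)$ being a finitely generated $\Z_\Ll$-module is a true but inert observation---finite generation of $\KH_3$ does not by itself yield finiteness of $(\ker{'\rho^X})_\Ll$; the actual finiteness comes, exactly as you then say, from the $V'_N$-tower terminating in the Bloch--Kato--Saito theorem, combined with the surjection $(\ker{'\rho^{X'}})_\Ll\twoheadrightarrow(\ker{'\rho^X})_\Ll$ (which you should make explicit).
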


\begin{Cor}
Let $F$ be an $N$-local field with final residue characteristic $p=\ch(F_0)$ and $X=\bigcup_i X_i$ a connected proper variety over $F$ with regular irreducible components $X_i$. Then $V'_N(X)$ is finite.
\begin{proof}
This follows from \ref{finite-ker} and \ref{finite-norm}.
\end{proof}
\end{Cor}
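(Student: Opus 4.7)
The plan is to combine the finiteness result for regular connected proper varieties established in \ref{finite-ker} with the surjectivity-up-to-finite-cokernel statement of \ref{finite-norm} part 2. More precisely, I would argue as follows.

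First, I would note that $X^{[0]} = \coprod_i X_i$ is the disjoint union of the regular connected components $X_i$, so
\[ V'_N(X^{[0]}) \;=\; \bigoplus_i V'_N(X_i). \]
Since each $X_i$ is regular, connected and proper over $F$, the main finiteness theorem \ref{finite-ker} applies and yields that $V'_N(X_i)$ is a finite group for every $i$. As $X$ is a scheme of finite type over $F$, the index set is finite, hence $V'_N(X^{[0]})$ is a finite direct sum of finite groups and is therefore finite.

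Next, I would invoke \ref{finite-norm} part 2, which states that the natural push-forward map
\[ \phi\colon V'_N(X^{[0]}) \longrightarrow V'_N(X) \]
has finite cokernel. Writing the standard exact sequence
\[ 0 \longrightarrow \im(\phi) \longrightarrow V'_N(X) \longrightarrow \coker(\phi) \longrightarrow 0, \]
the image $\im(\phi)$ is a quotient of the finite group $V'_N(X^{[0]})$, hence finite, and $\coker(\phi)$ is finite by \ref{finite-norm}. An extension of a finite group by a finite group is finite, so $V'_N(X)$ is finite as claimed.

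There is no real obstacle here: the corollary is purely a formal consequence of the two preceding results, and the only thing to be checked is that the hypotheses of \ref{finite-ker} (regularity, connectedness, properness over $F$) are inherited by each $X_i$, which is immediate from the assumptions on $X$ and its components.
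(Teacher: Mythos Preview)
Your proof is correct and follows exactly the approach implicit in the paper's one-line proof: apply \ref{finite-ker} to each regular irreducible component $X_i$ to get finiteness of $V'_N(X^{[0]})=\bigoplus_i V'_N(X_i)$, then use \ref{finite-norm} part 2 to conclude that $V'_N(X)$, being an extension of a finite cokernel by a finite image, is finite. You have simply made explicit the details the paper leaves to the reader.
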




\providecommand{\bysame}{\leavevmode\hbox to3em{\hrulefill}\thinspace}
\providecommand{\MR}{\relax\ifhmode\unskip\space\fi MR }
\providecommand{\MRhref}[2]{%
  \href{http://www.ams.org/mathscinet-getitem?mr=#1}{#2}
}
\providecommand{\href}[2]{#2}

\end{document}